\definecolor{darkblue}{rgb}{0,0.1,0.5}
\newtheorem{theorem}{Theorem}[section]
\newtheorem{lemma}[theorem]{Lemma}
\newtheorem{proposition}[theorem]{Proposition}
\newtheorem{corollary}{Corollary}
\newtheorem{definition}{Definition}[section]
\newtheorem{remark}{Remark}
\numberwithin{equation}{section}
\newcommand{\zero}{\boldsymbol{0}}
\newcommand{\Fcal}{{\mathcal F }}
\newcommand{\Mcal}{{\mathcal M }}
\newcommand{\Mscr}{\mathscr{M}}
\newcommand{\Rscr} {\mathscr{R}}
\newcommand{\dist}{{\operatorname {dist}}}
\newcommand{\Symb}{\mathcal{A}}
\newcommand{\SymbWR}{\mathcal{B}_{q}}
\newcommand{\N}{\mathbb{N}}
\newcommand{\Z}{\mathbb{Z}}
\newcommand{\R}{\mathbb{R}}
\newcommand{\Prob}{\mathbb{P}}
\newcommand{\Press}{{\mathrm P}}
\newcommand{\gammatwo}{468}
\newcommand{\block}{\mathrm{B}}
\newcommand{\supp}{\mathrm{supp}}
\newcommand{\PFunc}{\mathrm{Z}^\Phi_{\block_n}}
\newcommand{\EFunc}{\mathrm{E}^\Phi}
\newcommand{\past}{\mathcal{P}}
\newcommand{\Path}{\mathrm{T}}
\newcommand{\Circ}{\mathrm{C}}
\newcommand{\tBdry}{\partial_{\uparrow}}
\newcommand{\bBdry}{\partial_{\downarrow}}
\newcommand{\veczero}{\boldsymbol{0}}
\newcommand{\vecone}{\boldsymbol{1}}
\begin{document}

\title[Representation and poly-time approximation for pressure of $\Z^2$ lattice models]{Representation and poly-time approximation for pressure of $\Z^2$ lattice models in the non-uniqueness region}

\author{Stefan Adams}
\address{Stefan Adams\\Mathematics Institute\\University of Warwick\\Zeeman Building\\Coventry CV4 7AL\\United Kingdom}
\email{s.adams@warwick.ac.uk}

\author{Raimundo Brice\~no}
\address{Raimundo Brice\~no\\Department of Mathematics\\The University of British Columbia\\1984 Mathematics Road\\Vancouver, B.C., V6T 1Z2\\Canada}
\email{raimundo@math.ubc.ca}

\author{Brian Marcus}
\address{Brian Marcus\\Department of Mathematics\\The University of British Columbia\\1984 Mathematics Road\\Vancouver, B.C., V6T 1Z2\\Canada}
\email{marcus@math.ubc.ca}

\author{Ronnie Pavlov}
\address{Ronnie Pavlov\\Department of Mathematics\\University of Denver\\2280 S. Vine St.\\Denver, CO 80208\\USA}
\email{rpavlov@du.edu}

\keywords{Pressure , Gibbs measure, Variational principle, Potts model, Widom-Rowlinson model, Hard-core model}
\subjclass[2010]{82B20, 37D35, 37B10, 68W25}

\maketitle

\begin{abstract}
We develop a new pressure representation theorem for nearest-neigh\-bour Gibbs interactions and apply this to obtain the existence of efficient algorithms for approximating the pressure in the $2$-dimensional ferromagnetic Potts, multi-type Wi\-dom-Rowlinson and hard-core models. For Potts, our results apply to every inverse temperature but the critical. For Widom-Rowlinson and hard-core, they apply to certain subsets of both the subcritical and supercritical regions. The main novelty of our work is in the latter. 
\end{abstract}

\tableofcontents

%%%%%%%%%%%%%%%%%%%%%%%%%%%%%%%%%%%%%%%%%%%%%%%%%%

\section{Introduction}
\label{section1}

The \emph{pressure} of an interaction is a crucial quantity studied in statistical mechanics and dynamical systems.  In the former, it coincides with the \emph{specific Gibbs free energy} of a statistical mechanical system (e.g. \cite[Part III]{1-georgii} and \cite[Chapter 3-4]{1-ruelle}). In the latter, it is a generalization of \emph{topological entropy} and has many applications in a wide variety of classes of dynamical systems, ranging from symbolic to smooth systems (e.g. \cite{1-bowen,1-keller,1-walters}).

In this paper, we continue the development in \cite{1-gamarnik,1-marcus,1-briceno} of representing pressure with a simplified expression and using this to prove the existence of efficient algorithms for approximating pressure.

We consider \emph{nearest-neighbour (n.n.) real-valued interactions} $\Phi$ on $\Z^d$, i.e. interactions defined only on configurations on single sites and pairs of adjacent sites. Since pressure is normally defined for stationary interactions, we assume that our interactions are stationary here. Also, we allow the possibility of forbidden configurations $\mathcal{E}$ on pairs of adjacent sites, and so the space of feasible configurations on $\Z^d$ may be constrained. In the dynamical systems literature, the space of such feasible configurations is known as a \emph{nearest-neighbour shift of finite type (n.n. SFT)}, that here we denote $\Omega(\mathcal{E})$ (see Section \ref{specification}).

A \emph{specification} $\pi$ for a n.n. interaction $\Phi$ is a uniquely determined collection of Borel probability measures $\pi_\Lambda^\xi$ given in an explicit form in terms of $\Phi$, for configurations on finite subsets $\Lambda$ of $\Z^d$ and feasible configurations $\xi$ on the boundary of $\Lambda$. A \emph{Gibbs measure} $\mu$ for a n.n. interaction $\Phi$ is a Borel probability measure on $\Omega(\mathcal{E})$, whose conditional probability distributions on any such $\Lambda$ agree with the specification for $\Phi$ for all boundary conditions $\xi$ of positive $\mu$-measure.

Gibbs measures exist for all n.n. interactions (and, indeed, for much more general interactions), but a given n.n. interaction may have more than one Gibbs measure. In many cases, including the ones of most interest to us here, there is a n.n. interaction $\Phi$ which gives rise to a parameterized family of interactions $\left\{\zeta \Phi\right\}_{\zeta > 0}$, and uniqueness of Gibbs measures holds for sufficiently small $\zeta$ (the so-called \emph{subcritical region}) and uniqueness fails for  sufficiently large $\zeta$ (the so-called \emph{supercritical region}).

Given a n.n. interaction $\Phi$ on a n.n. SFT $\Omega(\mathcal{E})$, we can associate an \emph{energy} to any feasible configuration on a finite subset $\Lambda$ of $\Z^d$. The \emph{partition function} $\mathrm{Z}^\Phi_\Lambda$ of $\Phi$ on $\Lambda$ corresponds to the sum over all feasible configurations on $\Lambda$ of a function (namely, $e^{-x}$) of their corresponding energy, and the pressure $\Press(\Phi)$ is defined as the asymptotic exponential growth rate of the partition function $\PFunc$ on an increasing sequence of boxes $\block_n$ which exhausts $\Z^d$, as $n \rightarrow \infty$. Note that $\Press(\Phi)$ implicitly depends on $\Omega(\mathcal{E})$. 

When $d=1$, there is a closed-form expression for $\Press(\Phi)$ in terms of the largest eigenvalue of an adjacency matrix formed from $\Phi$ (see \cite[p. 99]{1-krieger}). In contrast, when $d \geq 2$, there are very few n.n. interactions $\Phi$ for which $\Press(\Phi)$ is known exactly.
 
There is much work in the literature on numerical approximations of $\Press(\Phi)$, both for somewhat general $\Phi$ and somewhat specific $\Phi$ (see \cite{3-baxter,1-friedland}). In our paper, we take a theoretical computer science point of view (see \cite{1-ko}): an algorithm for computing a real number $r$ is said to be \emph{poly-time} if for every $N \in \N$, the algorithm outputs an approximation $r_N$ to $r$, which is guaranteed to be accurate within $\frac{1}{N}$ and takes time at most polynomial in $N$ to compute. In that case, we say that $r$ is \emph{poly-time computable}.

One of our goals is to prove the existence of poly-time algorithms for $\Press(\Phi)$ under certain assumptions on $\Phi$ and $\Omega(\mathcal{E})$. While one might expect such algorithms to exist for most $\Phi$ and $\Omega(\mathcal{E})$ of practical interest, there exist $\Omega(\mathcal{E})$ for which even $\Press(0)$ (which corresponds to the topological entropy of $\Omega(\mathcal{E})$, when the n.n. interaction is $\Phi \equiv 0$) is not poly-time computable and some for which $\Press(0)$ is not computable at any rate (see \cite{1-hochman}). However, the closed-form expression when $d = 1$ mentioned above, always gives a poly-time algorithm in that case.  

We follow an approach initiated by Gamarnik and Katz \cite{1-gamarnik}, and further developed by two of the authors \cite{1-marcus} of the present paper. The basic idea is motivated by the \emph{variational principle} \cite[Section 4.4]{1-keller}, which asserts that $\Press(\Phi)$ is the supremum over all stationary Borel probability measures $\mu$ on $\Omega(\mathcal{E})$ of the sum of two quantities: one quantity is the \emph{measure-theoretic entropy} $h(\mu)$ of $\mu$ and the other quantity is the integral, with respect to $\mu$, of a simple explicit function $A_\Phi: \Omega(\mathcal{E}) \to \R$, determined by $\Phi$. The entropy $h(\mu)$ can be expressed as the integral, also with respect to $\mu$, of a function known as the \emph{information function} $I_\mu$, i.e. $h(\mu) = \int{I_\mu}d\mu$. The supremum is always achieved by a Gibbs measure $\mu$ for $\Phi$, and so for such $\mu$, we can write $\Press(\Phi) = \int{(I_\mu  + A_\Phi)}d\mu$.

The idea of \cite{1-gamarnik} was to represent $\Press(\Phi)$ as the integral of the same integrand, but with respect to a simpler measure $\nu$, i.e. $\Press(\Phi) = \int{(I_\mu  + A_\Phi)}d\nu$. This is what we call a \emph{pressure representation} and requires some assumptions on $\mu$, $\nu$ and $\Omega(\mathcal{E})$.

A pressure representation becomes especially useful for approximating $\Press(\Phi)$ in the case that $\nu$ is a periodic point measure, i.e. a measure which assigns equal weight to each distinct translation of a given periodic configuration (this was the only case considered in \cite{1-gamarnik}). Then $\int{(I_\mu  + A_\Phi)}d\nu$ becomes a finite sum. The terms in this sum corresponding to $A_\Phi$ are easy to compute. In this way, the problem of approximating $\Press(\Phi)$ (and therefore proving that $\Press(\Phi)$ is poly-time computable) reduces to approximating $I_\mu$ on a single periodic configuration and its translates.

The pressure representation theorems in \cite{1-gamarnik} and \cite{1-marcus}, as well as in our paper (see Theorem \ref{press-rep}), work in all dimensions $d$. Among other conditions, these results require conditions on $\Omega(\mathcal{E})$ and a convergence condition for certain sequences of finite volume half-plane measures (different convergence conditions in the different results). In the case $d=2$, if the convergence holds at exponential rate, then one obtains a poly-time algorithm for approximating $\Press(\Phi)$ (see Theorem \ref{press-aprox}). For $d > 2$, one can deduce an algorithm for approximating $\Press(\Phi)$ with sub-exponential but not polynomial rate.

In \cite{1-gamarnik} and \cite{1-marcus}, the convergence condition is given in terms of the information function $I_\mu$ of a stationary Gibbs measure $\mu$ for the interaction. In our paper, the condition is given in terms of a closely related function $\hat{I}_\pi$, which depends only on the specification $\pi$ of the interaction (see Section \ref{I-hat}), in contrast with \cite{1-gamarnik} and \cite{1-marcus}. This is natural, since the pressure depends only on the interaction and not on any particular Gibbs measure $\mu$.

In \cite{1-gamarnik}, the convergence condition is \emph{strong spatial mixing} of a Gibbs measure $\mu$ for the n.n. interaction $\Phi$. This condition is known to imply that there is a unique Gibbs measure for $\Phi$ and thus can be applied only in the uniqueness (subcritical) region of a given model. The convergence conditions in \cite{1-marcus} are weaker but also apply primarily to this region. However, in our paper, since our convergence condition depends only on the interaction, one might expect that the pressure representation and approximation results can apply in the non-uniqueness region as well. Indeed, they do.  As illustrations, we apply these results to explicit subcritical and supercritical sub-regions of the $2$-dimensional \emph{(ferromagnetic) Potts}, \emph{(multi-type) Widom-Rowlinson} and \emph{hard-core} models. In particular, for the pressure approximation results for these models, we establish the required exponential convergence conditions. However, we believe that our results are applicable to a much broader class of models, in particular satisfying weaker conditions on $\Omega(\mathcal{E})$ (e.g. the \emph{topological strong spatial mixing property}, introduced in \cite{1-briceno}). We remark that the strong spatial mixing condition of \cite{1-gamarnik} is a much stronger version of our condition, and so in this sense our results generalize some results of that paper (in particular, for the hard-core model on $\Z^2$).

In the case of the $2$-dimensional ferromagnetic Potts model, we obtain a pressure representation and efficient pressure approximation for all $\beta \neq \beta_{\rm c}(q)$, where $q$ is the number of colours, $\beta$ is the inverse temperature and $\beta_{\rm c}(q) = \log(1 + \sqrt{q})$ is the \emph{critical value} which separates the uniqueness and non-uniqueness regions. Our proof in the non-uniqueness region generalizes a result from \cite{1-chayes} for $q=2$ (i.e. the \emph{Ising model}) and we closely follow their proof, which relies heavily on a coupling with the \emph{bond random-cluster model} and planar duality. For the uniqueness region, our result follows from \cite{1-alexander}. (See Corollary \ref{potts-wr-hard}, part 1.)

For the Widom-Rowlinson and hard-core models, our results are not as complete as in the Potts case, since the subcritical and supercritical regions for these two models haven't been completely determined, in contrast with the Potts model. We also expect our results can be improved, because they only apply to proper subsets of the currently known uniqueness/non-uniqueness regions.

For the Widom-Rowlinson model, in the supercritical region, we use a variation of the disagreement percolation technique introduced in \cite{1-berg}, combined with the connection between the Widom-Rowlinson model and the \emph{site random-cluster model}. In the subcritical region, we apply directly the results in \cite{1-berg}. (See Corollary \ref{potts-wr-hard}, part 2.)

For the hard-core model, in the supercritical region, we combine the coupling in \cite{1-berg} and a Peierls argument used by Dobrushin (see \cite{1-dobrushin}). In the subcritical region, we use a recent result on strong spatial mixing for the hard-core model in $\Z^2$. (See Corollary \ref{potts-wr-hard}, part 3.)

For the Potts model, we also extend the pressure representation, by a continuity argument, to give an expression for the pressure at criticality. It is of interest that there is an exact, explicit, but non-rigorous, formula for the pressure at criticality due to Baxter \cite{1-baxter}. So, our rigorously obtained expression should agree with that formula, though we do not know how to prove this statement. It seems that Baxter's explicit expression gives a poly-time approximation algorithm, but we cannot justify that our expression is poly-time computable.

We remark that the finite volume half-plane measures mentioned above typically are constant on their bottom boundaries and thus are related to \emph{wetting models} (see \cite{1-pfister,2-russo}). Our proofs are related with such models where the interaction with the hard-wall is the same as the bulk interaction. 

The remainder of the paper is organized as follows. Since we have drawn heavily on many concepts from many different sources, for the convenience of the reader we have collected a good deal of relevant background material early in the paper. This can be found in Section \ref{section2}, Section \ref{section3}, Section \ref{section4}, Section \ref{section5} and Section \ref{section7}, with the notable exception of  Lemma \ref{pseudoMRF} in Section \ref{section5}, there is very little new material in those sections. In Section \ref{section2} and Section \ref{section3}, we review the fundamentals on configuration spaces on $\Z^d$, Gibbs measures and pressure. In Section \ref{section4}, we review the specific lattice spin systems models to which we apply our main results, and in Section \ref{section5} we review the bond and site random-cluster models which are intimately connected with two of our models. Our pressure representation theorem is contained in Section \ref{section6}. We review spatial mixing and stochastic dominance in Section \ref{section7} and use these concepts in Section \ref{section8} to help establish exponential convergence results for our models. Finally, in Section \ref{section9}, we combine our pressure representation theorem and our exponential convergence results in Section \ref{section8} to obtain pressure representations and poly-time algorithms for our models.

%%%%%%%%%%%%%%%%%%%%%%%%%%%%%%%%%%%%%%%%%%%%%%%%%%

\section{Definitions and preliminaries}
\label{section2}

\subsection{Hypercubic lattice $\Z^d$}
Given $d \in \N$, we consider the \emph{$d$-dimensional hypercubic lattice} $\Z^d$, which can be regarded as a countable graph with regular degree $2d$, where $V(\Z^d) = \Z^d$ is the set of sites and $E(\Z^d) = \left\{\{x,y\}: x,y \in \Z^d, \|x-y\|=1\right\}$ is the set of bonds, with $\|x\| = \sum_{i=1}^{d}\left|x_i\right|$ the $1$-norm. We will mainly focus our attention on the case $d =2$.

Two sites $x,y \in \Z^d$ are \emph{adjacent} if $\{x,y\} \in E(\Z^d)$ and we will denote this by $x \sim y$. All subsets of sites in $\Z^d$ will be denoted with uppercase Greek letters (e.g. $\Lambda$, $\Delta$, $\Theta$, etc.). Whenever a finite set $\Delta$ is contained in an infinite set $\Lambda$, we denote this by $\Delta \Subset \Lambda$. The \emph{(outer) boundary} of $\Lambda \subseteq \Z^d$ is the set $\partial \Lambda$ of $x \in \Z^d \setminus \Lambda$ which are adjacent to some element of $\Lambda$, i.e. $\partial \Lambda := \left\{x \in \Lambda^c: \dist(\{x\},\Lambda) = 1\right\}$, where $\dist(\Lambda_1,\Lambda_2) = \min_{x \in \Lambda_1, y \in \Lambda_2} \|x - y\|$, for $\Lambda_1, \Lambda_2 \subseteq \Z^d$. We also write the \emph{closure of $\Lambda$} as $\overline{\Lambda} := \Lambda \cup \partial \Lambda$. On the other hand, the \emph{inner boundary} of $\Lambda \subseteq \Z^d$ is the set $\underline{\partial} \Lambda := \partial \Lambda^c$ of $x \in \Lambda$ which are adjacent to some element of $\Lambda^c$. When denoting subsets of $\Z^d$ that are singletons, brackets will be usually omitted, e.g. $\dist(x,\Lambda)$ will be regarded to be the same as $\dist(\{x\},\Lambda)$.

A \emph{path} $\Path \Subset \Z^d$ will be any sequence of distinct sites $x_1,\dots,x_n$ such that $x_i \sim x_{i+1}$, for all $1 \leq i < n$. Similarly, a \emph{circuit} $\Circ \Subset \Z^d$ will be any path $x_1,\dots,x_n$ with $n \geq 4$ such that, in addition, $x_n \sim x_1$. We will say that the circuit is \emph{simple} if $x_i \sim x_j$ iff $|i-j|= 1$ or $\{i,j\} = \{1,n\}$ (in particular, $x_1,\dots, x_n$ are all distinct). For $\Delta,\Theta \subseteq \Z^d$, a \emph{path from $\Delta$ to $\Theta$} is a path $\Path$ whose first site is in $\Delta$ and whose last site is in $\Theta$. A set $\Lambda \subseteq \Z^d$ is said to be \emph{connected} if for every $x,y \in \Lambda$, there is a path $\Path$ from $x$ to $y$ contained in $\Lambda$ (i.e. $\Path \subseteq \Lambda$). A set $\Lambda \Subset \Z^2$ is said to be \emph{simply lattice-connected} if $\Lambda$ and $\Lambda^c$ are both connected.

In $\Z^d$ we can also define an alternative notion of adjacency and therefore,  an alternative notion of boundary, inner boundary, closure, path, connectedness, etc., by replacing the $1$-norm $\| \cdot \|$ with the $\infty$-norm $\| \cdot \|_\infty$, defined as $\|x\|_\infty = \max_{i=1,\dots,d}\left|x_i\right|$, for $x \in \Z^d$. When referring to these notions with respect to the $\infty$-norm, we will always add a $\star$ superscript and talk about $\star$-adjacency $x \overset{\star}{\sim} y$ , $\star$-boundary $\partial^\star \Lambda$, inner $\star$-boundary $\underline{\partial}^\star \Lambda$, $\star$-closure $\overline{\Lambda}^\star$, $\star$-path, $\star$-connectedness, etc. Notice that two sites $x$ and $y$ are $\star$-adjacent if they are adjacent in a version of the $d$-dimensional hypercubic lattice $\Z^d$ including in addition diagonal bonds. We will denote this version of the lattice by $\Z^{d,\star}$.

A natural order on $\Z^d$ is the so-called \emph{lexicographic order}, where $y \prec x$ (or $x \succ y$) if and only if $y \neq x$ and, for the smallest $i$ for which $y_i \neq x_i$, $y_i$ is strictly smaller than $x_i$. We also denote $y \preccurlyeq x$ (or $x \succcurlyeq y$) if $y \prec x$ or $y =  x$. Considering this order, we define the family of sets $S_{y,z} \Subset \Z^d$ as:
\begin{equation}
S_{y,z} := \left\{x \succcurlyeq \veczero: -y \leq x \leq z\right\},
\end{equation}
where $y,z \in \Z^d$ are such that $y,z \geq \veczero$ (here $\veczero$ denotes the vector $(0,\dots,0) \in \Z^d$ and $\geq$, the coordinate-wise comparison of vectors). In addition, given $n \in \N$, we define the \emph{$n$-block} as the set $\block_n := [-n,n]^d \cap \Z^d$ and we abbreviate by $S_n$ the set $S_{\vecone n,\vecone n} = \block_n \setminus \past$, where $\past := \left\{x \in \Z^d: x \prec \veczero\right\}$ denotes the \emph{(lexicographic) past} of $\Z^d$ and $\vecone$, the vector $(1,\dots,1) \in \Z^d$.

\subsection{Configuration spaces}

Consider a finite set of \emph{symbols} $\Symb$ called the \emph{alphabet}. A \emph{configuration} is a map $\theta: \Lambda \to \Symb$, for some $\emptyset \neq \Lambda \subseteq \Z^d$ (i.e. $\theta \in \Symb^\Lambda$), which will be usually denoted with lowercase Greek letters $\theta$, $\tau$, $\upsilon$. The set $\Lambda$ is called the \emph{shape} of $\theta$, and a configuration will be said to be finite if its shape is finite. For any configuration $\theta$ with shape $\Lambda$ and $\Delta \subseteq \Lambda$, $\theta(\Delta)$ denotes the restriction of $\theta$ to $\Delta$, i.e. the \emph{sub-configuration} of $\theta$ occupying $\Delta$. We will usually save the Greek letters $\xi$ and $\eta$ to denote configurations whose shape is the boundary $\partial \Lambda$ of some given set $\Lambda$. For $\Lambda_1$ and $\Lambda_2$ disjoint sets, $\theta \in \Symb^{\Lambda_1}$ and $\tau \in \Symb^{\Lambda_2}$, $\theta\tau$ will be the configuration on $\Lambda_1 \sqcup \Lambda_2$ defined by $(\theta\tau)(\Lambda_1) = \theta$ and $(\theta\tau)(\Lambda_2) = \tau$. For $a \in \Symb$ and $\Lambda \subseteq \Z^d$, $a^\Lambda$ denotes the configuration of all $a$'s on $\Lambda$. A \emph{point} is a configuration with shape $\Z^d$, i.e. an element of $\Symb^{\Z^d}$, usually denoted with the Greek letter $\omega$.

Given sets $\Lambda_1, \Lambda_2 \subseteq \Z^d$, $\Delta \subseteq \Lambda_1 \cap \Lambda_2$ and a pair of configurations $\theta \in \Symb^{\Lambda_1}$, $\tau \in \Symb^{\Lambda_2}$, we define the \emph{set of  $\Delta$-disagreement} as:
\begin{equation}
 \Sigma_\Delta(\theta,\tau) := \left\{x \in \Delta: \theta(x) \neq \tau(x)\right\},
\end{equation}
i.e. the set of sites in $\Delta$ where $\theta$ and $\eta$ differ.

The map $\sigma:\Z^d \times \Symb^{\Z^d} \to \Symb^{\Z^d}$ will be the \emph{shift action} on $\Symb^{\Z^d}$ defined by $(x,\omega) \mapsto \sigma_x(\omega)$, where $x \in \Z^d$ and $\omega \in \Symb^{\Z^d}$, with $\left(\sigma_x(\omega)\right)(y) = \omega(x+y)$, for $y \in \Z^d$. We also extend the shift action $\sigma_x$ to configurations with arbitrary shapes, i.e. given $\theta \in \Symb^\Lambda$, we define $\sigma_x(\theta) \in \Symb^{\Lambda-x}$ as the configuration such that $\left(\sigma_x(\theta)\right)(y) = \theta(x+y)$, for $y \in \Lambda-x$.

Given a point $\omega \in \Symb^{\Z^d}$, we define its \emph{orbit} as the set $\mathrm{O}(\omega) := \left\{\sigma_x(\omega)\right\}_{x \in \Z^d}$. We will say that a point $\omega$ is \emph{periodic} if $|\mathrm{O}(\omega)| < \infty$.

\subsection{Borel probability measures}

Given a configuration $\theta \in \Symb^\Lambda$, we define the \emph{cylinder set} $[\theta]_\Lambda := \{\omega \in \Symb^{\Z^d}: \omega(\Lambda) = \theta\}$ (or just $[\theta]$, if $\Lambda$ is understood). We denote by $\mathcal{F}_\Lambda$ the $\sigma$-algebra generated by all the cylinder sets with shape $\Lambda$ and set $\mathcal{F}: = \mathcal{F}_{\Z^d}$. 

A \emph{Borel probability measure} $\mu$ on $\mathcal{F}$ is a measure determined by its values on cylinder sets of finite configurations such that $\mu(\Symb^{\Z^d}) = 1$. Given a cylinder set $[\theta]$, we will just write $\mu(\theta)$ for the value of $\mu([\theta])$. The \emph{support} of such a measure $\mu$ is defined as:
\begin{equation}
\supp(\mu) := \left\{\omega \in \Symb^{\Z^d}: \mu(\omega(\Lambda)) > 0, \mbox{ for all } \Lambda \Subset \Z^d\right\}.
\end{equation}

Given $\Delta \subseteq \Lambda \subseteq \Z^d$ and a measure $\mu$ on $\mathcal{F}_\Lambda$, we denote by $\left.\mu\right|_\Delta$ the restriction (or projection or marginalization) of $\mu$ to $\mathcal{F}_\Delta$.

A measure $\mu$ is \emph{shift-invariant} (or \emph{stationary}) if $\mu(\sigma_{x}(A)) = \mu(A)$, for all measurable sets $A \in \mathcal{F}$ and $x \in \Z^d$. Given any point $\omega \in \Symb^{\Z^d}$ and $A \in \mathcal{F}$, we define the \emph{delta-measure supported on $\omega$} as the measure:
\begin{equation}
\delta_\omega(A) =
\begin{cases}
1	&	\mbox{ if } \omega \in A,	\\
0	&	\mbox{ otherwise.}
\end{cases}
\end{equation}

If $\omega$ is a periodic point with orbit $\mathrm{O}(\omega) = \left\{\omega_1,\dots,\omega_k\right\}$, we define $\nu^\omega$ to be the shift-invariant Borel probability measure supported on $\mathrm{O}(\omega)$ given by:
\begin{equation}
\nu^\omega := \frac{1}{k}\left(\delta_{\omega_1} + \cdots + \delta_{\omega_k} \right).
\end{equation}

\subsection{Markov random fields}

\begin{definition}
Given $\Lambda \subseteq \Z^d$, a probability measure $\rho$ on $\Symb^{\Lambda}$ is a \emph{Markov random field ($\Lambda$-MRF)} if, for any subset $\Theta \Subset \Lambda$, any $\theta \in \Symb^\Theta$, any $\Delta \Subset \Lambda$ s.t. $\partial \Theta \cap \Lambda \subseteq \Delta \subseteq \Lambda \setminus \Theta$, and any $\tau \in \Symb^\Delta$ with $\rho(\tau) > 0$, it is the case that:
\begin{equation}
\rho\left(\theta \middle\vert \tau\right) = \rho\left(\theta  \middle\vert \tau(\partial \Theta \cap \Lambda)\right).
\end{equation}

In other words, an MRF is a measure where every finite configuration conditioned to its boundary is independent of the configuration on the complement.
\end{definition}

%%%%%%%%%%%%%%%%%%%%%%%%%%%%%%%%%%%%%%%%%%%%%%%%%%

\section{Specifications, Gibbs measures and pressure}
\label{section3}

\subsection{Gibbs specifications}
\label{specification}
Fix a dimension $d \in \N$ and let $\mathcal{E} = (\mathcal{E}_1,\dots,\mathcal{E}_d)$ be a \emph{set of constraints} such that $\mathcal{E}_i \subseteq \Symb^2$, for $i = 1,\dots,d$. Given any set $\Lambda \subseteq \Z^d$ and a configuration $\theta \in \Symb^\Lambda$, we say that $\theta$ is \emph{feasible} for $\mathcal{E}$ if for every $x \in \Lambda$ such that $\{x,x+e_i\} \subseteq \Lambda$, we have that $(\theta(x), \theta(x+e_i)) \notin \mathcal{E}_i$, where $e_1,\dots,e_d$ is the canonical basis. The \emph{nearest-neighbour shift of finite type (n.n. SFT)} $\Omega(\mathcal{E})$ induced by $\mathcal{E}$, is the set of points:
\begin{equation}
\Omega(\mathcal{E}) := \left\{\omega \in \Symb^{\Z^d}: \omega \mathrm{~is~feasible}\right\}.
\end{equation}

We will always assume that $\Omega(\mathcal{E}) \neq \emptyset$.

In the symbolic dynamics literature, a feasible configuration on a set $\Lambda$ is called \emph{locally admissible}, and is called \emph{globally admissible} if it also extends to a point of $\Omega(\mathcal{E})$.

Notice that $\Omega(\mathcal{E})$ is always a shift-invariant set, i.e. $\sigma_x(\Omega(\mathcal{E})) = \Omega(\mathcal{E})$, for all $x \in \Z^d$. Given a n.n. SFT $\Omega(\mathcal{E})$, $\mathcal{M}_{1}(\Omega(\mathcal{E}))$ denotes the set of Borel probability measures whose support $\supp(\mu)$ is contained in $\Omega(\mathcal{E})$ and $\mathcal{M}_{1,\sigma}(\Omega(\mathcal{E})) \subseteq \mathcal{M}_{1}(\Omega(\mathcal{E}))$, the corresponding subset of shift-invariant Borel probability measures. Given a configuration $\theta \in \Symb^\Lambda$, $[\theta]^{\Omega(\mathcal{E})}_\Lambda$ will denote the set $[\theta]_\Lambda \cap \Omega(\mathcal{E})$ (or just $[\theta]^{\Omega(\mathcal{E})}$ if $\Lambda$ is understood).

\begin{definition}
A \emph{nearest-neighbour (n.n.) interaction} for a set of constraints $\mathcal{E}$ is a real-valued shift-invariant function $\Phi$ from the set of configurations on sites $x$ and feasible configurations on bonds $\{x,x+e_i\}$ to $\R$, for $x \in \Z^d$ and $i = 1,\dots,d$. Here, shift-invariance means that $\Phi(\sigma_{x}(\theta)) = \Phi(\theta)$ for configurations $\theta$ on sites and bonds, and for all $x \in \Z^d$.
\end{definition}

Often in the literature a n.n. interaction is not required to be shift-invariant. Our assumption of shift-invariance on a n.n. interaction fits naturally with the shift-in\-va\-rian\-ce of a n.n. SFT. Clearly, a n.n. interaction is defined by only finitely many numbers, namely the values of the interaction on configurations on $\{\veczero\}$ and bonds $\{\veczero,e_i\}$, for $i = 1,\dots,d$.

We can view an interaction $\Phi$ as implicitly determining the constraints $\mathcal{E}$, and hence $\Omega(\mathcal{E})$, by the absence of $\mathcal{E}$ from the domain of $\Phi$. Some authors incorporate the constraints by allowing the interaction to take the value $+\infty$.

\begin{definition}
Given a n.n. interaction $\Phi$ for a set of constraints $\mathcal{E}$ and a set $\Lambda \Subset \Z^d$, we define the \emph{energy function} $\EFunc_{\Lambda}: \Symb^\Lambda \to \R$ as:
\begin{equation}
\EFunc_{\Lambda}(\theta) := \sum_{x \in \Lambda} \Phi(\theta(x)) + \sum_{i=1}^d \sum_{\{x,x+e_i\} \subseteq \Lambda} \Phi(\theta(\{x,x+e_i\})),
\end{equation}
where $\theta$ is any feasible configuration in $\Symb^\Lambda$. We define the \emph{partition function} of $\Lambda$ as:
\begin{equation}
\mathrm{Z}_{\Lambda}^\Phi := \sum_{\theta~\mathrm{feasible}} \exp\left(-\EFunc_\Lambda(\theta)\right),
\end{equation}
and the following \emph{boundary-free} probability measure on $\Symb^\Lambda$:
\begin{equation}
\pi^{(f)}_\Lambda(\theta) :=
\begin{cases}
\frac{1}{Z^{\Phi}_\Lambda}\exp\left(-\EFunc_\Lambda(\theta)\right)	&	\mathrm{if}~\theta~\mathrm{is~feasible},	\\
0													&	\mathrm{otherwise.}
\end{cases}
\end{equation}

Analogously, for an arbitrary $\omega \in \Omega(\mathcal{E})$, we can take $\xi = \omega(\partial \Lambda)$ and consider:
\begin{equation}
\mathrm{Z}^{\Phi,\xi}_{\Lambda} := \sum_{\theta:~\theta\xi~\mathrm{feasible}} \exp\left(-\EFunc_{\overline{\Lambda}}(\theta\xi)\right),
\end{equation}
and then define the \emph{$\xi$-boundary} probability measure on $\Symb^\Lambda$:
\begin{equation}
\pi^{\xi}_\Lambda(\theta) :=
\begin{cases}
\frac{1}{\mathrm{Z}^{\Phi,\xi}_\Lambda}\exp\left(-\EFunc_{\overline{\Lambda}}(\theta\xi)\right)	&	\mathrm{if}~\theta\xi~\mathrm{is~feasible},	\\
0																		&	\mathrm{otherwise.}
\end{cases}
\end{equation}

The collection $\pi = \{\pi^{\xi}_\Lambda\}_{\Lambda,\xi}$ is called a \emph{$\Z^d$ Gibbs specification} for the n.n. interaction $\Phi$. For $\Delta \subseteq \Lambda$ and $\tau \in \Symb^{\Delta}$, we marginalize as follows:
\begin{equation}
\pi^\xi_\Lambda(\tau) = \sum_{\theta \in \Symb^{\Lambda}: \theta(\Delta) = \tau} \pi_\Lambda^{\xi}(\theta).
\end{equation}
\end{definition}

Notice that each $\pi^{\xi}_\Lambda$ is an MRF on $\Symb^\Lambda$. In addition, a Gibbs specification $\pi$ as defined above is always stationary, in the sense that $\pi^{\sigma_x(\xi)}_{\Lambda-x}(\sigma_x(A)) = \pi^{\xi}_\Lambda(A)$, for every $A \subseteq \Symb^\Lambda$. We will usually think of the set of restrictions $\mathcal{E}$ implicit when considering a n.n. interaction $\Phi$. Given a point $\omega \in \Omega(\mathcal{E})$, we will abbreviate:
\begin{equation}
\pi_\Lambda^\omega(\cdot) := \pi_\Lambda^{\omega(\partial \Lambda)}(\cdot).
\end{equation}

\subsection{Gibbs measures}

\begin{definition}
A \emph{nearest-neighbour (n.n.) Gibbs measure} for a n.n. interaction $\Phi$ is a measure $\mu \in \mathcal{M}_{1}(\Omega(\mathcal{E}))$ such that for any $\Lambda \Subset \Z^d$ and $\omega \in \Symb^{\Z^d}$ with $\mu(\omega(\partial \Lambda)) > 0$, we have that $\mathrm{Z}^{\Phi,\omega(\partial \Lambda)}_{\Lambda} > 0$ and:
\begin{equation}
\label{DLR}
\mu(\theta|\Fcal_{\Lambda^{\rm c}})(\omega) = \pi_{\Lambda}^{\omega}(\theta)~\mbox{$\mu$-a.s.},
\end{equation}
for $\theta \in \Symb^{\Lambda}$, where $\{\pi^{\xi}_\Lambda\}_{\Lambda,\xi}$ is the stationary $\Z^d$ Gibbs specification for $\Phi$. 
\end{definition}

While our interactions and specifications are assumed to be shift-invariant, a Gibbs measure for such an interaction may or may not be stationary. The definition of n.n. Gibbs measure, shows that such a measure is an MRF. The definition is stated only for cylinder events $[\theta]$ in $\Lambda$, but this is equivalent to the usual definition with general events $A \in \mathcal{F}$ instead.

Every n.n. interaction $\Phi$ has at least one (stationary) n.n. Gibbs measure (special case of a general result in \cite[Theorem 3.7 and Theorem 4.2]{1-ruelle}). For a single $\Phi$, multiple Gibbs measures can exist. This phenomenon is usually called a \emph{phase transition}.

\subsection{Pressure}

Now we proceed to define the pressure of a n.n. interaction $\Phi$.

\begin{definition}
\label{pressure}
Given a n.n. interaction $\Phi$ for a set of restrictions $\mathcal{E}$, the \emph{pressure of $\Phi$} is defined as:
\begin{equation}
\Press(\Phi) := \lim_{n \rightarrow \infty} \frac{1}{|\block_n|} \log \PFunc.
\end{equation}

Given $n \in \N$, we can also define an analogous version $\hat{\mathrm{Z}}^\Phi_{\block_n}$ of the partition function $\PFunc$, but over globally admissible configurations:
\begin{equation}
\hat{\mathrm{Z}}^\Phi_{\block_n} := \sum_{\theta \in \Symb^{\block_n}: [\theta]^{\Omega(\mathcal{E})} \neq \emptyset} \exp\left(-\EFunc_{\block_n}(\theta)\right).
\end{equation}
\end{definition}

Notice that $\hat{\mathrm{Z}}^\Phi_{\block_n} \leq \mathrm{Z}^\Phi_{\block_n}$. The following result states that in the normalized limit, both quantities coincide.

\begin{theorem}[{\cite[Theorem 3.4]{1-ruelle}, see also \cite[Theorem 2.5]{1-friedland}}]
\label{friedland}
Given a n.n. interaction $\Phi$ for a set of restrictions $\mathcal{E}$:
\begin{equation}
\Press(\Phi) = \lim_{n \rightarrow \infty} \frac{1}{|\block_n|} \log \hat{\mathrm{Z}}^\Phi_{\block_n}.
\end{equation}
\end{theorem}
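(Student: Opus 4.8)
The plan is to show that the two normalized limits in Definition \ref{pressure} and Theorem \ref{friedland} agree by sandwiching $\hat{\mathrm{Z}}^\Phi_{\block_n}$ between $\mathrm{Z}^\Phi_{\block_n}$ and a controlled multiple of $\mathrm{Z}^\Phi_{\block_m}$ for a slightly smaller box. One direction is immediate: since every globally admissible configuration is in particular locally admissible (feasible), we have $\hat{\mathrm{Z}}^\Phi_{\block_n} \le \mathrm{Z}^\Phi_{\block_n}$, hence $\limsup_n \frac{1}{|\block_n|}\log \hat{\mathrm{Z}}^\Phi_{\block_n} \le \Press(\Phi)$. The content is the reverse inequality, and for that I would first record that the limit defining $\Press(\Phi)$ exists (standard subadditivity of $\log \mathrm{Z}^\Phi_{\block_n}$ along the exhausting sequence of boxes, which one may cite from \cite{1-ruelle}), so that it suffices to bound $\frac{1}{|\block_n|}\log \mathrm{Z}^\Phi_{\block_n}$ from above by something converging to $\liminf_m \frac{1}{|\block_m|}\log \hat{\mathrm{Z}}^\Phi_{\block_m}$.

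The key step is a ``padding'' or ``filling'' argument. Fix $m < n$ and tile $\block_n$ by translates of $\block_m$ together with a boundary corridor. More precisely, choose a background point $\omega^\ast \in \Omega(\mathcal{E})$ (which exists by our standing assumption $\Omega(\mathcal{E}) \neq \emptyset$); I would like to say that any feasible configuration $\theta$ on $\block_m$ that agrees with $\omega^\ast$ near $\partial \block_m$ is globally admissible, because it can be completed to a point of $\Omega(\mathcal{E})$ by gluing $\omega^\ast$ outside. Restricting the sum defining $\hat{\mathrm{Z}}^\Phi_{\block_m}$ to such configurations loses only a bounded multiplicative factor per box (the interaction is bounded on the finitely many site/bond configurations, so forcing a bounded-width frame to equal $\omega^\ast$ costs at most $e^{C|\partial\block_m|}$), and then disjoint translates of these framed boxes can be concatenated freely inside $\block_n$. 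Counting energies, the energy of the concatenated configuration on $\block_n$ differs from the sum of the per-box energies only by bond terms straddling box boundaries, again bounded by a constant times the total boundary area. This yields an inequality of the shape
\begin{equation}
\mathrm{Z}^\Phi_{\block_n} \;\le\; C_1^{\,|\block_n|/|\block_m|}\, e^{C_2\,\partial\text{-terms}}\,\bigl(\hat{\mathrm{Z}}^\Phi_{\block_m}\bigr)^{\lfloor |\block_n|/|\block_m|\rfloor}\, C_3^{\,\text{leftover}},
\end{equation}
where the leftover accounts for the sites of $\block_n$ not covered by full translates of $\block_m$, bounded crudely by $(\#\Symb)^{\text{leftover}} e^{C|\text{leftover}|}$.

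Taking $\frac{1}{|\block_n|}\log$ of that inequality and letting $n \to \infty$ with $m$ fixed, the surface and leftover contributions vanish in the limit (they are $O(1/n)$ relative to the volume $|\block_n|$), and one is left with
\begin{equation}
\Press(\Phi) \;\le\; \frac{1}{|\block_m|}\log \hat{\mathrm{Z}}^\Phi_{\block_m} \;+\; \frac{\log C_1}{|\block_m|}.
\end{equation}
Then letting $m \to \infty$ gives $\Press(\Phi) \le \liminf_m \frac{1}{|\block_m|}\log \hat{\mathrm{Z}}^\Phi_{\block_m}$, which together with the trivial upper bound completes the proof. The main obstacle, and the point requiring care, is the claim that a feasible configuration on $\block_m$ which matches a fixed global point $\omega^\ast$ on a frame of the appropriate (nearest-neighbour, so width-one) thickness is automatically globally admissible: one must check that gluing $\theta$ inside to $\omega^\ast$ outside produces no forbidden adjacent pair across the seam, which holds precisely because the interaction (and hence the constraint set $\mathcal{E}$) is nearest-neighbour, so only bonds crossing $\partial \block_m$ matter and these are all of the form (frame value $=\omega^\ast$, outside value $=\omega^\ast$), hence feasible. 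Everything else is bookkeeping of boundedness constants, which I would not belabor since it is entirely analogous to the standard proof that $\Press(\Phi)$ is well-defined.
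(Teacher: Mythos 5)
The paper does not actually prove this theorem --- it is quoted from Ruelle and Friedland --- so your argument has to stand on its own, and it has a genuine gap in the only direction that matters. The easy inequality $\hat{\mathrm{Z}}^\Phi_{\block_n} \leq \mathrm{Z}^\Phi_{\block_n}$ is fine, and so is the observation that a feasible configuration on $\block_m$ agreeing with a fixed $\omega^\ast \in \Omega(\mathcal{E})$ on the width-one inner frame is globally admissible. But the gluing construction you describe only manufactures globally admissible configurations on $\block_n$ out of frame-restricted globally admissible configurations on translates of $\block_m$; that is a \emph{lower} bound on $\hat{\mathrm{Z}}^\Phi_{\block_n}$, and it cannot deliver the displayed \emph{upper} bound on $\mathrm{Z}^\Phi_{\block_n}$, which runs over the strictly larger set of locally admissible configurations. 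A locally admissible configuration on $\block_n$ restricts on each tile to a configuration that is merely locally admissible and certainly need not match $\omega^\ast$ on any frame, so nothing in your construction controls it. Even granting every step you wrote, the conclusion would be $\Press(\Phi) \geq \limsup_m \frac{1}{|\block_m|}\log\hat{\mathrm{Z}}^\Phi_{\block_m}$ --- the trivial direction again.

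A second, independent problem is the claim that restricting the sum in $\hat{\mathrm{Z}}^\Phi_{\block_m}$ to configurations matching $\omega^\ast$ on the frame ``costs at most $e^{C|\partial\block_m|}$.'' That factor accounts only for the energy carried by the frame, not for the entropic collapse: fixing $O(m^{d-1})$ sites can reduce the number of compatible interior configurations by a factor exponential in the \emph{volume}, or down to a single configuration, because for a general n.n.\ SFT there is no way to deform an arbitrary globally admissible configuration so that it agrees with $\omega^\ast$ on the frame while remaining feasible. This is precisely the fillability issue that the paper's square block D-condition and safe-symbol hypotheses are introduced to control elsewhere, but Theorem \ref{friedland} assumes neither. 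The hard content of the theorem --- that locally admissible configurations contribute no extra exponential growth --- requires a genuinely different mechanism (in Friedland's argument, a transfer-operator/spectral-radius analysis producing globally admissible configurations that attain the locally admissible growth rate), and your outline does not supply it.
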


The pressure is the main quantity of interest in this paper. Our goals are to find simple representations of pressure in terms of very special configurations and use this to develop efficient (in principle) algorithms to approximate the pressure.

%%%%%%%%%%%%%%%%%%%%%%%%%%%%%%%%%%%%%%%%%%%%%%%%%%

\section{Main models: Potts, Widom-Rowlinson and hard-core}
\label{section4}

In this section we introduce the three main families of lattice models studied in this paper. The first one will be the Potts model, which can be regarded as a generalization of the Ising model by considering more than two types of particles. The second one, the Widom-Rowlinson model, is also a multi-type particle system but with hard-core exclusion between particles of different type. The third one is the classical hard-core model.

\subsection{The (ferromagnetic) Potts model}

Given $d,q \in \N$ and $\beta > 0$, the \emph{$\Z^d$ (ferromagnetic) Potts model with $q$ types and inverse temperature $\beta$} is defined over the alphabet $\Symb_q = \{1,\dots, q\}$ and given by the n.n. interaction:
\begin{equation}
\Phi_\beta(\theta) =
\begin{cases}
-\beta	&	\mathrm{if}~\theta(x) = \theta(x+e_i),		\\
0		&	\mathrm{if}~\theta(x) \neq \theta(x+e_i),
\end{cases}
\end{equation}
for $\theta \in \Symb_q^{\{x,x+e_i\}}$, $x \in \Z^d$, $i = 1,\dots,d$, where the constraints $\mathcal{E}_i$ are empty. The specification $\pi^{\mathrm{FP}}_\beta = \{\pi_{\beta,\Lambda}^\xi\}_{\Lambda,\xi}$ induced by $\Phi_\beta$ defines the (ferromagnetic) Potts model, where neighbouring sites preferably align to each other with the same type or ``colour'' from the alphabet $\Symb_q$.

A measure $\mu \in \Mcal_{1}(\Symb_q^{\Z^d})$ is called a \emph{Potts Gibbs measure} for $q$ types and inverse temperature $\beta > 0 $ if it is a n.n. Gibbs measure for the specification $\pi^{\mathrm{FP}}_\beta$ above.

\begin{theorem}[{\cite{1-beffara}}]
\label{theoremPotts}
For the $\Z^2$ (ferromagnetic) Potts model with $q$ types and inverse temperature $\beta$, there exists a critical inverse temperature $ \beta_{\rm c}(q) := \log(1+\sqrt{q})$ such that uniqueness of Gibbs measures holds for $\beta < \beta_{\rm c}(q)$ and for $\beta > \beta_{\rm c}(q)$ there is a phase transition.
\end{theorem}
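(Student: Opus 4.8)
The plan is to reduce everything to the bond random-cluster (Fortuin--Kasteleyn) model on $\Z^2$ and exploit its planar self-duality; the statement then combines classical facts with the recent determination of the critical point by Beffara and Duminil-Copin. First I would pass through the Edwards--Sokal coupling, which is available precisely because $q$ is a positive integer: for $\beta > 0$ and $p = 1-e^{-\beta}$, the $q$-colour Potts Gibbs measures on $\Z^2$ are in correspondence with the random-cluster measures $\phi_{p,q}$, and the qualitative dictionary one needs is that uniqueness of the Potts Gibbs measure is equivalent to the absence of an infinite cluster under the wired measure $\phi^1_{p,q}$, while percolation under $\phi^1_{p,q}$ forces the ``$i$-biased'' Gibbs measures obtained from all-$i$ boundary conditions to be mutually distinct, i.e.\ a phase transition. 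Thus it suffices to locate the percolation threshold $p_{\rm c}(q)$ of the random-cluster model and to check that $-\log(1-p_{\rm c}(q)) = \log(1+\sqrt q)$.

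Next I would invoke planar duality of the random-cluster model on $\Z^2$: the dual of $\phi_{p,q}$ on the dual lattice is $\phi_{p^*,q}$, where $p^*$ is determined by $\tfrac{p\,p^*}{(1-p)(1-p^*)} = q$. The fixed-point equation $p = p^*$ has unique solution $p_{\rm sd}(q) = \tfrac{\sqrt q}{1+\sqrt q}$, so that $1-p_{\rm sd}(q) = \tfrac{1}{1+\sqrt q}$ and the corresponding self-dual inverse temperature is $\beta_{\rm sd}(q) = -\log(1-p_{\rm sd}(q)) = \log(1+\sqrt q) = \beta_{\rm c}(q)$. A standard square-crossing (Zhang-type) symmetry argument, using the finite-energy property and the FKG inequality, shows there is no infinite cluster at $p_{\rm sd}(q)$; together with monotonicity in $p$ this yields the easy inequality $p_{\rm c}(q) \ge p_{\rm sd}(q)$. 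The reverse inequality $p_{\rm c}(q) \le p_{\rm sd}(q)$ is the deep part.

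For the reverse inequality I would follow Beffara--Duminil-Copin: apply the sharp-threshold theorem for monotone events under random-cluster measures (the Bollob\'as--Riordan and Graham--Grimmett machinery, controlling the total influence via comparison between boundary conditions and the lattice symmetries) to the event that a long rectangle is crossed. At $p_{\rm sd}(q)$, duality forces the crossing probability of a square to stay bounded away from $0$ and $1$ uniformly in the scale; the sharp-threshold estimate then shows that for every $p < p_{\rm sd}(q)$ the rectangle-crossing probabilities decay to $0$, in fact fast enough to produce, via the usual dual-circuit argument, exponential decay of the two-point function of $\phi_{p,q}$ and hence of the Potts model. This simultaneously gives $p_{\rm c}(q) = p_{\rm sd}(q)$ and the uniqueness of the Potts Gibbs measure for all $\beta < \beta_{\rm c}(q)$ (no infinite cluster under $\phi^1$, so $\phi^1 = \phi^0$), while for $p > p_{\rm sd}(q) = p_{\rm c}(q)$ there is percolation under $\phi^1_{p,q}$, hence a phase transition for $\beta > \beta_{\rm c}(q)$.

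The main obstacle is exactly the identification $p_{\rm c}(q) = p_{\rm sd}(q)$ --- concretely, the sharp-threshold input and the renormalisation/RSW-type crossing estimates needed to upgrade ``crossing probability bounded away from $0$ and $1$ at the self-dual point'' into ``exponential decay strictly below it'', uniformly over all $q \ge 1$. Everything surrounding it (the Edwards--Sokal dictionary and the translation between random-cluster percolation and Potts uniqueness/phase transition, the planar duality relation, the elementary computation of $p_{\rm sd}(q)$, and the high-temperature and Peierls arguments underlying the bare existence of the two regimes) is comparatively routine.
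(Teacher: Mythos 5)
This theorem is not proved in the paper at all; it is quoted verbatim from the cited reference of Beffara and Duminil-Copin, and your sketch (Edwards--Sokal dictionary, planar self-duality giving $p_{\rm sd}(q)=\sqrt q/(1+\sqrt q)$, Zhang-type argument for $p_{\rm c}\ge p_{\rm sd}$, and the sharp-threshold/crossing-probability machinery for the hard inequality $p_{\rm c}\le p_{\rm sd}$ together with exponential decay below criticality) is an accurate outline of exactly the argument in that source. So the proposal is correct and follows essentially the same route the paper relies on, with the genuinely difficult step correctly identified as the identification $p_{\rm c}(q)=p_{\rm sd}(q)$.
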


\subsection{The (multi-type) Widom-Rowlinson model}

Given $d,q \in \N$ and $\lambda > 0$, the \emph{$\Z^d$ Widom-Rowlinson model with $q$ types and activity $\lambda$} is defined over the alphabet $\SymbWR = \{0,1,\dots, q\}$, and given by the set of constraints $\mathcal{E} = (\mathcal{E}_1,\dots,\mathcal{E}_d)$, where $\mathcal{E}_i = \{\theta \in \left(\SymbWR \setminus \{0\}\right)^2: \theta(1) \neq \theta(2)\}$, for all $i = 1,\dots,d$, and by the n.n. interaction for $\mathcal{E}$ over configurations on sites:
\begin{equation}
\Phi_\lambda(\theta) =
\begin{cases}
-\log(\lambda)	&	\mathrm{if}~\theta \in \{1,\dots,q\},	\\
0			&	\mathrm{if}~\theta = 0,
\end{cases}
\end{equation}
where $\theta \in \SymbWR^{\{x\}}$ and $x \in \Z^d$. The specification $\pi^{\mathrm{WR}}_\lambda = \{\pi_{\lambda,\Lambda}^\xi\}_{\Lambda,\xi}$ induced by $\Phi_\lambda$ defines the (multi-type) Widom-Rowlinson model, where neighbouring sites are forced to align to each other with the same type or ``colour'' from the alphabet $\SymbWR$ or with $0$.

A measure $\mu \in \Mcal_{1}(\SymbWR^{\Z^d})$ is called a \emph{Widom-Rowlinson Gibbs measure} for $q$ types and activity $\lambda > 0$ if it is a n.n. Gibbs measure for the specification $\pi^{\mathrm{WR}}_\lambda$ above.

\begin{theorem}[{\cite{1-runnels}, see also \cite{3-georgii}}]
\label{theoremWR}
For the $\Z^2$ Widom-Rowlinson model with $q$ types and activity $\lambda$, uniqueness of Gibbs measures holds for sufficiently small $\lambda$ and there is a phase transition for sufficiently large $\lambda$.
\end{theorem}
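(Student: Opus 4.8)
\emph{Strategy.} The two regimes call for different techniques; throughout we assume $q\ge 2$, since for $q=1$ the Widom--Rowlinson model is just an independent product over sites and there is no phase transition. For small $\lambda$ I would verify a uniqueness criterion; for large $\lambda$ I would run a Peierls argument, the crucial device being the passage to the colour-blind ``site random-cluster'' representation of the model.

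\emph{Uniqueness for small $\lambda$.} I would check Dobrushin's uniqueness condition for the specification $\pi^{\mathrm{WR}}_\lambda$. Each single-site kernel $\pi^{\xi}_{\lambda,\{x\}}$ depends only on $\xi$ restricted to the four neighbours of $x$: if $m_\xi\in\{0,\dots,q\}$ is the number of colours not excluded by those neighbours, then $\pi^{\xi}_{\lambda,\{x\}}$ gives mass $(1+m_\xi\lambda)^{-1}$ to $0$ and $\lambda(1+m_\xi\lambda)^{-1}$ to each admissible colour. Changing $\xi$ at a single neighbour moves this distribution by $O(\lambda)$ in total variation (the largest change is in the mass of $0$, which moves by at most $(q-1)\lambda$). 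Hence the Dobrushin interdependence coefficients obey $c_{x,y}\le K(q)\lambda$ for $y\sim x$ and $c_{x,y}=0$ otherwise, so $\sup_x\sum_{y}c_{x,y}\le 4K(q)\lambda<1$ once $\lambda$ is small, and Dobrushin's theorem gives a unique Gibbs measure. (Alternatively, the disagreement-percolation coupling of \cite{1-berg} works directly: the cluster of disagreements is stochastically dominated by Bernoulli site percolation with parameter $O(\lambda)$, which is subcritical for small $\lambda$.)

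\emph{Phase transition for large $\lambda$.} Fix a box $\Lambda$ and, for $a\in\{1,\dots,q\}$, let $\mu^a_\Lambda$ be the finite-volume Gibbs measure with boundary condition $\omega(\partial\Lambda)\equiv a$. It suffices to prove $\mu^a_\Lambda(\omega(\veczero)=a)\to 1$ as $\lambda\to\infty$, uniformly in $\Lambda$: then, by the colour symmetry of the model, any weak-$\ast$ subsequential limits $\mu^1$ and $\mu^2$ are Gibbs measures with $\mu^1(\omega(\veczero)=1)>\tfrac12>\mu^2(\omega(\veczero)=1)$, hence distinct. Conditioning on the occupied set $\eta=\{x\in\Lambda:\omega(x)\neq 0\}$, each $\Z^2$-connected component of $\eta$ is monochromatic, components meeting $\underline\partial\Lambda$ are forced to colour $a$, and every other component is coloured freely among the $q$ colours; so the law of $\eta$ under $\mu^a_\Lambda$ is the wired site random-cluster measure
\[
\nu^a_\Lambda(\eta)\ \propto\ \lambda^{|\eta|}\,q^{k'(\eta)},\qquad \eta\subseteq\Lambda,
\]
where $k'(\eta)$ is the number of components of $\eta$ not meeting $\underline\partial\Lambda$. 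Since a component of $\eta$ reaching $\underline\partial\Lambda$ has colour $a$, we get $\mu^a_\Lambda(\omega(\veczero)\neq a)\le\nu^a_\Lambda(\veczero\not\leftrightarrow\underline\partial\Lambda)$, the probability that $\veczero$ is not joined to $\underline\partial\Lambda$ through occupied sites. Now $\nu^a_\Lambda$ has uniform finite energy: since deleting a site changes $k'$ by at most $4$, one has $\nu^a_\Lambda(x\notin\eta\mid\mathcal F_{\Lambda\setminus\{x\}})\le q^4/\lambda$ a.s., and peeling sites off one at a time gives $\nu^a_\Lambda(\gamma\subseteq\eta^{\mathrm c})\le(q^4/\lambda)^{|\gamma|}$ for every finite $\gamma$. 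Finally, $\{\veczero\not\leftrightarrow\underline\partial\Lambda\}$ forces either $\omega(\veczero)=0$ or, by the standard $\Z^2$-versus-$\star$ planar duality, the existence of a $\star$-circuit of vacant sites surrounding $\veczero$; summing $(q^4/\lambda)^\ell$ over the at most $\ell\,C^\ell$ such circuits of length $\ell$ yields $\nu^a_\Lambda(\veczero\not\leftrightarrow\underline\partial\Lambda)\le q^4/\lambda+\sum_{\ell\ge4}\ell\,C^\ell(q^4/\lambda)^\ell\to 0$ as $\lambda\to\infty$, uniformly in $\Lambda$.

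\emph{Main obstacle.} The delicate part is the large-$\lambda$ estimate. Carried out directly in the colour variables a Peierls bound breaks down, because in the Widom--Rowlinson model two neighbours of $x$ carrying different colours can \emph{force} $x$ empty, so there is no uniform upper bound on the conditional probability that a prescribed site is vacant. What repairs the argument is exactly the colour-blind site random-cluster projection, which genuinely has finite energy; setting up that representation (recalled in Section~\ref{section5}; see also \cite{1-runnels,3-georgii}) together with the planar-duality lemma relating $\Z^2$-disconnection to a surrounding vacant $\star$-circuit is where the real work lies, after which the Peierls count and the weak-$\ast$ compactness argument are routine.
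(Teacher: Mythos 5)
Your proposal is correct. Note first that the paper itself offers no proof of this statement: Theorem \ref{theoremWR} is quoted from \cite{1-runnels} and \cite{3-georgii}, so there is no in-paper argument to compare against. That said, your two-part argument is essentially the standard one found in those references, and it reconstructs precisely the machinery the paper assembles elsewhere for its own purposes: your colour-blind projection is Proposition \ref{WR_cluster}; your finite-energy bound $\nu^a_\Lambda(x\notin\eta\mid\mathcal F_{\Lambda\setminus\{x\}})\le q^4/\lambda$ is (a marginally weaker form of) Lemma \ref{unlikely0}, whose sharper constant $q^3/(\lambda+q^3)$ is what produces the explicit threshold $\lambda_2(q)=q^3 p_{\rm c}/(1-p_{\rm c})$ in Theorem \ref{widom-decay}; and your alternative small-$\lambda$ route via disagreement percolation is exactly how the paper obtains uniqueness below $\lambda_1(q)=\frac1q\frac{p_{\rm c}}{1-p_{\rm c}}$ via Theorem \ref{vdBM-SSM}. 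The individual steps all check out: the single-site kernel computation and the $O(\lambda)$ Dobrushin coefficients are right (the safe symbol $0$ ensures the specification is everywhere well defined despite the hard constraints); the identification of the occupied-set marginal with the wired site random-cluster measure, the inclusion $\{\omega(\veczero)\neq a\}\subseteq\{\veczero\not\leftrightarrow\underline\partial\Lambda\}$, the bound $k'(\eta\setminus\{x\})-k'(\eta\cup\{x\})\le 4$, the peeling argument (which correctly does not require $\nu^a_\Lambda$ to be an MRF), and the vacant $\star$-circuit duality are all sound, and you correctly restrict to $q\ge 2$. The one thing worth making explicit if you wrote this up is the citation for the dual-circuit lemma (e.g.\ \cite[Lemma 2.23]{1-kesten}, which the paper also invokes in Lemma \ref{pseudoMRF}); otherwise nothing is missing.
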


\subsection{The hard-core lattice gas model}
\label{hardcore_sec}

Given $\gamma > 0$, the \emph{$\Z^d$ hard-core model with activity $\gamma$} is defined over the alphabet $\{0,1\}$, and given by the set of constraints $\mathcal{E}$, where $\mathcal{E}_i = \{(1,1)\}$, for all $i = 1,\dots,d$, and the the n.n. interaction for $\mathcal{E}$ over configurations on sites:
\begin{equation}
\Phi_\gamma(\theta) =
\begin{cases}
-\log(\gamma)	&	\mathrm{if}~\theta = 1,	\\
0			&	\mathrm{if}~\theta = 0,
\end{cases}
\end{equation}
for $\theta \in \{0,1\}^{\{x\}}$, $x \in \Z^d$. The specification $\pi^{\mathrm{HC}}_\gamma = \{\pi_{\gamma,\Lambda}^\xi\}_{\Lambda,\xi}$ induced by $\Phi_\gamma$ defines the hard-core model, where neighbouring sites cannot be both $1$.

A measure $\mu \in \Mcal_{1}(\{0,1\}^{\Z^d})$ is called a \emph{hard-core Gibbs measure} for activity $\gamma > 0$ if it is a n.n. Gibbs measure for the specification $\pi^{\mathrm{HC}}_\gamma$ above.

\begin{theorem}[{\cite[Theorem 3.3]{2-georgii}}]
\label{theoremHC}
For the $\Z^2$ hard-core model with activity $\gamma$, uniqueness of Gibbs measures holds for sufficiently small $\gamma$ and there is a phase transition for sufficiently large $\gamma$.
\end{theorem}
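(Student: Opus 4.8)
The plan is to handle the two claims by completely different techniques: uniqueness for small $\gamma$ via a Dobrushin/disagreement-percolation comparison, and non-uniqueness for large $\gamma$ via a Peierls contour argument exploiting the bipartite structure of $\Z^2$.

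\textbf{Small $\gamma$.} The starting point is the explicit form of the single-site conditionals: for any $\Lambda \Subset \Z^2$, any feasible boundary condition $\xi$, and any site $x \in \Lambda$, the $\pi^{\mathrm{HC}}_\gamma$-conditional probability that $x$ is occupied, given the configuration on $\Lambda \setminus \{x\}$, equals $0$ if some neighbour of $x$ is occupied and equals $\gamma/(1+\gamma)$ otherwise. In particular this conditional law depends on the rest of the configuration only through the (at most four) neighbours of $x$, and changing one neighbour changes it in total variation by at most $\gamma/(1+\gamma)$. Hence the Dobrushin interdependence matrix has all row sums at most $4\gamma/(1+\gamma)$, and Dobrushin's uniqueness condition holds whenever $4\gamma/(1+\gamma) < 1$, i.e. for all sufficiently small $\gamma$; this yields a unique n.n. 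Gibbs measure. Equivalently, since every Gibbs measure is stochastically dominated by the Bernoulli product measure of density $\gamma/(1+\gamma)$, one may run the van den Berg--Maes disagreement-percolation coupling: two Gibbs measures can disagree at $x$ only along an open site-percolation path of parameter $\gamma/(1+\gamma)$ from $x$ to infinity, which has probability zero once $\gamma/(1+\gamma) < p_{\mathrm c}^{\mathrm{site}}(\Z^2)$.

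\textbf{Large $\gamma$.} Partition $\Z^2$ into the even sublattice $\{x : x_1 + x_2 \text{ even}\}$ and the odd sublattice. For a box $\block_n$, let $\xi^{\mathrm e}$ be the boundary condition on $\partial \block_n$ that is $1$ on even sites and $0$ on odd sites, let $\xi^{\mathrm o}$ be the one with the roles of even and odd reversed (both are feasible), and set $\mu_n^{\mathrm e} = \pi_{\gamma,\block_n}^{\xi^{\mathrm e}}$, $\mu_n^{\mathrm o} = \pi_{\gamma,\block_n}^{\xi^{\mathrm o}}$. The goal is to show $\mu_n^{\mathrm e}(\omega(\veczero) = 1) > \frac{1}{2}$ uniformly in $n$ for $\gamma$ large (note $\veczero$ is even). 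If instead $\omega(\veczero) = 0$, then the set of ``defect'' sites of $\omega$ relative to the even-occupied ground state --- namely the even sites that are $0$ together with the odd sites that are $1$ --- contains $\veczero$ but is disjoint from $\partial \block_n$, so by planarity it is enclosed by a contour $\Gamma$, a closed self-avoiding curve in the dual lattice separating $\veczero$ from $\partial \block_n$. The key estimate is that each fixed such $\Gamma$ satisfies $\mu_n^{\mathrm e}(\Gamma \text{ occurs}) \le C\gamma^{-c|\Gamma|}$ for absolute constants $C, c > 0$, obtained from the injective ``flip the phase inside $\Gamma$'' surgery, under which each dual edge of $\Gamma$ forces an adjacent even site to be vacated, contributing a weight ratio comparable to $(1+\gamma)^{-1}$. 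Since the number of contours of length $L$ surrounding $\veczero$ is at most $L\,3^{L}$, a union bound gives $\mu_n^{\mathrm e}(\omega(\veczero) = 0) \le \sum_{L \ge 4} L\,3^{L} C\gamma^{-cL} < \frac{1}{2}$ for $\gamma$ large, uniformly in $n$. By the even--odd symmetry, $\mu_n^{\mathrm o}(\omega(\veczero) = 0) > \frac{1}{2}$; hence any weak subsequential limits $\mu^{\mathrm e}$, $\mu^{\mathrm o}$ are distinct n.n. Gibbs measures for $\pi^{\mathrm{HC}}_\gamma$, which is the asserted phase transition.

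\textbf{Main obstacle.} The small-$\gamma$ half is essentially routine once the single-site conditionals are written down. The work is in the Peierls estimate: one must set up the contours so that (i) $\{\omega(\veczero) = 0\}$ genuinely forces a separating contour (a careful use of planar duality for the defect set), and (ii) the ``flip inside $\Gamma$'' map is an injection whose weight ratio is bounded by $\gamma^{-c|\Gamma|}$ uniformly, which requires tracking precisely how many sites the flip forces to be vacated along $\Gamma$ and checking that no hard-core edge constraint is violated by the surgery. It is exactly this combinatorial bookkeeping --- together with the contour-counting bound --- that has to be done with care.
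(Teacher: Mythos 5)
The paper offers no proof of this statement --- it is imported verbatim from the cited reference of Georgii, H\"aggstr\"om and Maes --- so there is nothing internal to compare against; but your two-part argument is the standard one and is sound. The small-$\gamma$ half (Dobrushin uniqueness with row sums $4\gamma/(1+\gamma)$, or equivalently the van den Berg--Maes disagreement-percolation criterion with $Q(\pi^{\mathrm{HC}}_\gamma)=\gamma/(1+\gamma)<p_{\rm c}$) is exactly the mechanism the paper records as Theorem \ref{vdBM-SSM}. The large-$\gamma$ half is Dobrushin's even/odd Peierls argument, and Part I of the paper's Theorem \ref{hard-decay} executes essentially the surgery you sketch in a closely related setting: there the contour is the $\star$-connected inner external boundary $\Gamma(\theta)$ of the defect component containing $\veczero$ (which automatically consists of vacant even sites), the injection is a shift of the interior configuration by one lattice unit in a majority direction $x_0$, the gain is $\gamma^{|\Gamma|/4}$, and the site-animal count $k\,t(k)$ plays the role of your $L\,3^L$. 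Two small corrections to your bookkeeping: the weight ratio per particle added by the flip is a factor $\gamma$ (the activity of an added occupied site), not $(1+\gamma)$ --- your displayed bound $C\gamma^{-c|\Gamma|}$ is nonetheless of the right form --- and the contour should be taken as the inner external boundary of the \emph{connected} defect component containing $\veczero$ so that it consists entirely of vacant even sites, after which injectivity and feasibility of the flip must be checked separately for shifted, new, and untouched particles, exactly as in the paper's proof of Theorem \ref{hard-decay}. Neither point is a gap; both fall under the bookkeeping you already flagged as the main obstacle.
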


For both the Potts and Widom-Rowlinson models we will also distinguish a particular type of particle or colour in the alphabet. W.l.o.g., we can take the type $q$ in $\Symb_q$ or $\SymbWR \setminus \{0\}$, respectively. Given this colour, we will denote by $\omega_q$ the fixed point $q^{\Z^d}$. For the hard-core model, we will consider the two special points $\omega^{(e)}$ and $\omega^{(o)}$, given by:
\begin{equation}
\omega^{(e)}(x) :=
\begin{cases}
0	&	\mbox{if } \sum_i x_i \mbox{ is even,}		\\	
1	&	\mbox{if } \sum_i x_i \mbox{ is odd,}
\end{cases}
\end{equation}
and $\omega^{(o)} = \sigma_{e_1}(\omega^{(e)})$.

%%%%%%%%%%%%%%%%%%%%%%%%%%%%%%%%%%%%%%%%%%%%%%%%%%

\section{Random-cluster models}
\label{section5}

The Potts and Widom-Rowlinson models have interpretations in terms of a random-cluster representation. The Potts model is related to a random-cluster model on bonds (via the so-called Edwards-Sokal coupling), while the Widom-Rowlinson is naturally related to a random-cluster model on sites.

\begin{definition}
A \emph{coupling} of two probability measures $\rho_1$ on a finite set $X$ and $\rho_2$ on a finite set $Y$, is a probability measure $\Prob$ on the set $X \times Y$ such that, for any $A \subseteq X$ and $B \subseteq Y$, we have that:
\begin{equation}
\Prob(A \times Y) = \rho_1(A) \mbox{ and } \Prob(X \times B) = \rho_2(B).
\end{equation}
\end{definition}

\subsection{The bond random-cluster model and the Potts model}
\label{bond_cluster}

We will make use of the \textit{bond random-cluster model}. One of our main results, Part I of Theorem \ref{potts-decay}, is proven using arguments based on this model. This model is a two parameter family of dependent bond percolation models on a finite graph. We are mainly interested in finite subgraphs of $\Z^2$ and we describe the model with boundary conditions indexed by $i = 0,1$.

Fix a finite simply lattice-connected set of sites $\Lambda$. Let $E^0(\Lambda)$ denote the set of bonds with both endpoints in $\Lambda$ and $E^1(\Lambda)$ the set of bonds with at least one endpoint in $\Lambda$. We speak of a bond $e$ as being \emph{open} if $w(e) = 1$, and as being \emph{closed} if $w(e) = 0$.

\begin{definition}
Given a finite simply lattice-connected set $\Lambda$, and parameters $p \in [0,1]$ and $q > 0$, we define the \emph{free} ($i=0$) and \emph{wired} ($i=1$) bond random-cluster distributions on ${E^i(\Lambda)}$ ($i = 0,1$) as the measures $\phi_{p,q,\Lambda}^{i}$ that to each $w \in \{0,1\}^{E^{i}(\Lambda)}$ assigns probability proportional to:
\begin{equation}
\phi_{p,q,\Lambda}^{(i)}(w) \propto \left\{\prod_{e \in E^i(\Lambda)} p^{w(e)}(1-p)^{1-w(e)}\right\}q^{k^i_\Lambda(w)} = \left(\frac{p}{1-p}\right)^{\#_1(w)} q^{k^i_\Lambda(w)},
\end{equation}
where $\#_1(w)$ is the number of open bonds in $w$ and $k_\Lambda^0(w)$ and $k_\Lambda^1(w)$ are the number of connected components (including isolated sites) in the graphs $(\Lambda, \{e \in E^0(\Lambda): w(e) = 1\})$ and $(\Z^2, E^0(\Z^2 \setminus \Lambda) \cup \{e \in E^1(\Lambda): w(e) = 1\})$, respectively.
\end{definition}

Notice that when $q = 1$, we recover the ordinary Bernoulli bond percolation measure $\phi_{p,\Lambda}$, while other choices of $q$ lead to dependence between bonds. For given $p$ and $q$, one can also define bond random-cluster measures $\phi_{p,q}^{(i)}$ on $\Z^2$ as a limit of finite volume measures $\phi_{p,q,\Lambda}^{(i)}$ ($i=0,1$).

\begin{theorem}[{\cite[Lemma 6.8]{2-georgii}}]
For $p \in [0,1]$ and $q \in \N$, the limiting measures:
\begin{equation}
\phi_{p,q}^{(i)} = \lim_{n \rightarrow \infty} \phi_{p,q,\Lambda_n}^{(i)},\qquad i \in \{0,1\},
\end{equation}
exist and are translation invariant, where $\{\Lambda_n\}_n$ is any increasing sequence of finite simply lattice-connected sets that exhausts $\Z^2$.
\end{theorem}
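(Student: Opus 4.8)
The plan is to deduce both the existence and the translation invariance of the limiting measures from the monotonicity properties enjoyed by the finite-volume random-cluster measures when $q \ge 1$, which is the relevant case here since $q \in \N$. The key input is that for $q \ge 1$ each $\phi_{p,q,\Lambda}^{(i)}$ satisfies the FKG lattice condition and is therefore positively associated; this is the tool behind all of the comparisons below. First I would record the two volume-monotonicity statements: if $\Lambda \subseteq \Lambda'$ are finite simply lattice-connected sets, then, after projecting both measures to the common bond set $E^0(\Lambda)$, one has $\phi_{p,q,\Lambda}^{(0)} \le_{\rm st} \phi_{p,q,\Lambda'}^{(0)}$ (the free measure increases with the volume) and $\phi_{p,q,\Lambda'}^{(1)} \le_{\rm st} \phi_{p,q,\Lambda}^{(1)}$ (the wired measure decreases with the volume), where $\le_{\rm st}$ denotes stochastic domination with respect to the coordinatewise partial order on bond configurations. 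Each of these is the standard ``finite-energy plus FKG'' argument: enlarging the region from $\Lambda$ to $\Lambda'$ is, for the free measure, essentially conditioning $\phi_{p,q,\Lambda'}^{(0)}$ on all bonds of $E^0(\Lambda')\setminus E^0(\Lambda)$ being closed, and for the wired measure essentially conditioning $\phi_{p,q,\Lambda'}^{(1)}$ on all bonds of $E^0(\Lambda')\setminus E^0(\Lambda)$ being open, and each such conditioning shifts the measure monotonically in the stochastic order; one must also check that the component-counting factor $q^{k^i_\Lambda(w)}$ (isolated sites included) transforms correctly under these operations, and this is exactly where $q \ge 1$ is used.

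Granting these two facts, existence of the limit is routine. Fix a cylinder event $A$ depending on the states of finitely many bonds, all contained in $E^0(\Lambda_{n_0})$ for some $n_0$. For $n \ge n_0$ the sequence $\phi_{p,q,\Lambda_n}^{(0)}(A)$ is nondecreasing in $n$ (apply the free volume-monotonicity to $\Lambda_n \subseteq \Lambda_{n+1}$) and bounded, hence converges; likewise $\phi_{p,q,\Lambda_n}^{(1)}(A)$ is nonincreasing and converges. Since cylinder functions are dense in $C(\{0,1\}^{E(\Z^2)})$ and that space is compact metrizable, it follows that $\phi_{p,q,\Lambda_n}^{(i)}$ converges weakly to a Borel probability measure $\phi_{p,q}^{(i)}$ on $\{0,1\}^{E(\Z^2)}$ (equivalently, the limiting cylinder probabilities form a consistent family and one invokes Kolmogorov extension). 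Independence of the exhausting sequence is again a consequence of monotonicity: two increasing sequences of finite simply lattice-connected sets exhausting $\Z^2$ are mutually cofinal, so the volume-monotonicity, passed to the limit, forces each of the two limiting measures to stochastically dominate the other, whence they coincide.

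Translation invariance is then formal. The finite-volume measures are shift-covariant: for $x \in \Z^2$, the measure $\phi_{p,q,\Lambda+x}^{(i)}$ is the image of $\phi_{p,q,\Lambda}^{(i)}$ under translation by $x$, since $E^i(\cdot)$, $\#_1(\cdot)$ and $k^i_\Lambda(\cdot)$ are all shift-covariant. Given an exhausting sequence $\{\Lambda_n\}_n$ as in the statement, the translated sequence $\{\Lambda_n + x\}_n$ is again an increasing sequence of finite simply lattice-connected sets exhausting $\Z^2$, so it yields the same limit $\phi_{p,q}^{(i)}$ by the sequence-independence just established; comparing this with the shift-covariance of the finite-volume measures shows that $\phi_{p,q}^{(i)}$ is invariant under translation by $x$.

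I expect the main obstacle to be the careful verification of the two volume-monotonicity statements — in particular, tracking how the connected-component count (including isolated vertices) behaves when one conditions on exterior bonds and restricts configurations, and checking that the ``simply lattice-connected'' hypothesis is either preserved or harmless when comparing two sets drawn from different exhausting sequences (if needed, by first interpolating through a larger simply lattice-connected set containing both). Everything downstream of the monotonicity — compactness, consistency and extension, mutual cofinality, and shift-covariance — is standard.
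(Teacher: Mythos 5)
This statement is quoted from the literature (\cite[Lemma 6.8]{2-georgii}) and the paper gives no proof of its own; your argument is precisely the standard one used there and in Grimmett's book — FKG/positive association for $q\ge 1$, volume monotonicity of the free and wired measures, monotone limits of cylinder probabilities, cofinality of exhausting sequences, and shift-covariance — so you are on the intended route. One small imprecision: the stochastic-domination inequalities give monotonicity of $\phi^{(i)}_{p,q,\Lambda_n}(A)$ in $n$ only for \emph{increasing} events $A$, not for arbitrary cylinder events as you state; the standard repair is to first obtain convergence of $\phi^{(i)}_{p,q,\Lambda_n}(w\equiv 1 \text{ on } F)$ for every finite bond set $F$ and then recover all cylinder probabilities by inclusion--exclusion, after which the rest of your argument (consistency, extension, sequence-independence, translation invariance) goes through unchanged.
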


General bond random-cluster measures on $\Z^2$ can be defined using an analogue of the DLR condition \cite[Definition 4.29]{1-grimmett}. For $q \geq 1$, there is a value $p_{\rm c}(q)$ that delimits exactly the transition for existence of an infinite open cluster for these measures. It is known \cite[p. 107]{1-grimmett} that for $q \geq 1$ and $p < p_{\rm c}(q)$, there is a unique such measure which we denote by \label{phi-p-q}$\phi_{p,q}$ (characterized by the nonexistence of infinite open clusters), and that coincides with $\phi_{p,q}^{(0)}$ and $\phi_{p,q}^{(1)}$ in this region. It was recently proven (see \cite{1-beffara}) that $p_{\rm c}(q) = \frac{\sqrt{q}}{1 + \sqrt{q}}$, for every $q \geq 1$.

Let $p = 1-e^{-\beta}$. The {\em free Edwards-Sokal coupling} $\Prob_{p,q,\Lambda}^{(0)}$ (see \cite{1-grimmett}) is a coupling between the boundary-free Potts measure $\pi^{(f)}_{\beta,\Lambda}$ and $\phi_{p,q,\Lambda}^{(0)}$. The {\em wired Edwards-Sokal coupling} $\Prob_{p,q,\Lambda}^{(1)}$ is a coupling between $\pi_{\beta,\Lambda}^{\omega_q}$ and $\phi_{p,q,\Lambda}^{(1)}$. Notice that $p_{\rm c}(q) = 1-e^{-\beta_{\rm c}(q)}$.

These couplings are measures on pairs of site configurations and corresponding bond configurations. The projection to site configurations is the boundary-free/$\omega_q$-boundary Potts measure, and the projection to bond configurations is the free/wired bond random-cluster measure, respectively.

\begin{theorem}[{\cite[Theorem 1.13]{1-grimmett}}]
\label{edward}
Let $\Lambda$ be a finite simply lattice-connected set, $q \in \N$, and let $p \in [0,1]$ and $\beta > 0$ be such that $p = 1 - e^{-\beta}$. Then:
\begin{enumerate}
\item For $w \in \{0,1\}^{E^1(\Lambda)}$, the conditional measure $\Prob_{p,q,\Lambda}^{(1)}\left(\cdot \middle\vert \Symb_q^\Lambda \times \{w\}\right)$ on $\Symb_q^\Lambda$ is obtained by putting random colours on entire clusters of $w$ not connected with $\Z^2 \setminus \Lambda$ (of which there are $k^1_\Lambda(w) - 1$) and colour $q$ on the clusters connected with $\Z^2 \setminus \Lambda$. These colours are constant on given clusters, are independent between clusters, and the random ones are uniformly distributed on the set $\Symb_q$.
\item For $\theta \in \Symb^\Lambda$, the conditional measure $\Prob_{p,q,\Lambda}^{(1)}\left(\cdot \middle\vert \{\theta\} \times \{0,1\}^{E^1(\Lambda)}\right)$ on $\{0,1\}^{E^1(\Lambda)}$ is obtained as follows. Consider the extended configuration $\hat{\theta} = \theta q^{\partial \Lambda}$ and an arbitrary bond $e = \{x,y\} \in E^1(\Lambda)$. If $\hat{\theta}(x) \neq \hat{\theta}(y)$, we set $w(e) = 0$. If $\hat{\theta}(x) = \hat{\theta}(y)$, we set:
\begin{equation}
w(e) = \begin{cases}
1	&	\mathrm{with~probability~} p,	\\
0	&	\mathrm{otherwise},
\end{cases}
\end{equation}
the values of different $w(e)$ being (conditionally) independent random variables.
\end{enumerate}
\end{theorem}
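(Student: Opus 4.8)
The plan is to read off both conditional laws directly from the explicit product form of the wired Edwards--Sokal measure, exploiting that its joint weight factorizes over bonds. Recall that, with $p = 1 - e^{-\beta}$, the coupling $\Prob_{p,q,\Lambda}^{(1)}$ is the probability measure on $\Symb_q^\Lambda \times \{0,1\}^{E^1(\Lambda)}$ assigning to a pair $(\theta,w)$ a mass proportional to
\[
\prod_{e = \{x,y\} \in E^1(\Lambda)} \Bigl[\, (1-p)\,\mathbf{1}[w(e) = 0] + p\,\mathbf{1}[w(e) = 1]\,\mathbf{1}[\hat\theta(x) = \hat\theta(y)] \,\Bigr],
\]
where $\hat\theta := \theta\, q^{\partial\Lambda}$ extends $\theta$ by the colour $q$ on $\partial\Lambda$. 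A preliminary check is that this measure has the two marginals promised in Section~\ref{bond_cluster}: summing the product over $w$ leaves $(1-p)$ for each bond whose endpoints disagree in $\hat\theta$ and $1$ for each bond whose endpoints agree, so the $\theta$-marginal is proportional to $(1-p)^{D(\hat\theta)}$, where $D(\hat\theta)$ counts the bonds of $E^1(\Lambda)$ with disagreeing endpoints in $\hat\theta$; since bonds with both endpoints in $\partial\Lambda$ never disagree in $\hat\theta$, this is proportional to $\exp(-\EFunc_{\overline{\Lambda}}(\hat\theta))$, i.e.\ it is $\pi_{\beta,\Lambda}^{\omega_q}$. Summing instead over $\theta$ (see the next paragraph) produces the factor $q^{k^1_\Lambda(w) - 1}$ and recovers $\phi_{p,q,\Lambda}^{(1)}$.

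For part~2, fix $\theta$ and view the displayed weight as a function of $w$ alone. Every factor depends on a single coordinate $w(e)$, so the conditional law of $w$ given $\theta$ is a product of independent $\{0,1\}$-valued variables, one per bond. If $\hat\theta(x) \neq \hat\theta(y)$, only the $w(e) = 0$ term survives, forcing $w(e) = 0$; if $\hat\theta(x) = \hat\theta(y)$, the two surviving terms are $(1-p)$ and $p$, so $w(e) = 1$ with probability $p$ and $w(e) = 0$ otherwise, the bonds being independent. This is exactly the prescription in part~2.

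For part~1, fix $w$ and view the weight as a function of $\theta$. Every closed bond contributes the constant $(1-p)$, and every open bond $e = \{x,y\}$ contributes $p\,\mathbf{1}[\hat\theta(x) = \hat\theta(y)]$, so the weight equals a constant depending only on $w$ times the indicator that $\hat\theta$ is constant on each connected component of the graph of open bonds of $w$. Because $\Lambda^{c}$ is connected and $\hat\theta \equiv q$ on $\partial\Lambda$, every component meeting $\Z^2 \setminus \Lambda$ belongs to the single distinguished component counted in $k^1_\Lambda(w)$ and is forced to colour $q$; the remaining $k^1_\Lambda(w) - 1$ components lie entirely inside $\Lambda$ and may independently receive any colour of $\Symb_q$, all assignments carrying equal weight. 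After normalization, the conditional law of $\theta$ given $w$ colours the clusters connected with $\Z^2 \setminus \Lambda$ by $q$ and the other $k^1_\Lambda(w) - 1$ clusters by i.i.d.\ uniform colours, constant on clusters and independent between clusters --- this is part~1.

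None of this is deep --- the statement is classical --- so I expect no genuine obstacle, only the bookkeeping of the wired boundary condition: one must keep straight the distinction between $E^0(\Lambda)$ and $E^1(\Lambda)$ and the role of $\partial\Lambda$, and in particular confirm that the clusters ``connected with $\Z^2 \setminus \Lambda$'' correspond precisely to the one component that is subtracted in $k^1_\Lambda(w) - 1$, rather than overcounting. Matching that combinatorial count of free clusters with the exponent appearing in the definition of $\phi_{p,q,\Lambda}^{(1)}$ is the only place where an error could slip in.
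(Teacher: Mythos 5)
Your proof is correct. The paper does not actually prove this statement — it is quoted directly from Grimmett's book (Theorem 1.13 there) — and your factorization argument, reading both conditional laws off the bond-wise product form of the Edwards--Sokal weight and checking that the single wired component accounts for the exponent $k^1_\Lambda(w)-1$, is essentially the standard proof given in that reference; the key bookkeeping point you flag (that connectedness of $\Lambda^c$ makes all clusters meeting $\Z^2\setminus\Lambda$ into one component of the wired count) is handled correctly.
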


The couplings can be used to relate probabilities and expectations for the Potts model to corresponding events and expectations in the associated bond random-cluster model. A main example is a relation between the two-point correlation function in the Potts model and the connectivity function in the bond random-cluster model \cite[Theorem 1.16]{1-grimmett}.

By considering a displaced version of $\Z^2$, namely $\frac{1}{2}\vecone + \Z^2$ (the \emph{dual lattice}), we can define a notion of duality for bond configurations $w$. Notice that every bond $e \in E(\Z^2)$ (if we think of bonds as unitary vertical and horizontal straight segments) is intersected perpendicularly by one and only one \emph{dual bond} $e^* \in E(\frac{1}{2}\vecone + \Z^2)$, so there is a clear correspondence between $E(\Z^2)$ and $E(\frac{1}{2}\vecone + \Z^2)$. We are mainly interested in wired bond random-cluster distributions on the set of sites $\tilde{\block}_n := [-n+1,n]^2 \cap \Z^2$. Given $n \in \N$, if we consider the set of bonds $E^1(\tilde{\block}_n)$, it is easy to check that there is a correspondence $e \mapsto e^*$ between this set and the set of bonds from $\frac{1}{2}\vecone + \Z^2$ with both endpoints in $[-n,n]^2 \cap \left(\frac{1}{2}\vecone + \Z^2\right)$, which can be identified with the set $E^{0}(\block_n)$. Then, given a bond configuration $w \in E^1(\tilde{\block}_n)$ we can associate a dual bond configuration $w^* \in E^{0}(\block_n)$ such that $w^*(e^*) = 0$ if and only if $w(e) = 1$.

Considering this, we have the corresponding equality:
\begin{proposition}[{\cite[Equation 6.12 and Theorem 6.13]{1-grimmett}}]
\label{RC_dual_prob}
Given $n \in \N$, $p \in [0,1]$ and $q \in \N$:
\begin{equation}
\label{free_wired_probs}
\phi^{(1)}_{p,q,\tilde{\block}_n}(w) = \phi^{(0)}_{p^*,q,\block_n}(w^*),
\end{equation}
for any bond configuration $w \in \{0,1\}^{E^{1}(\tilde{\block}_n)}$, where $\tilde{\block}_n = [-n+1,n]^2 \cap \Z^2$ and $p^* \in [0,1]$ is the dual value of $p$, which is given by:
\begin{equation}
\label{dual_prob}
\frac{p^*}{1 - p^*} = \frac{q(1 - p)}{p}.
\end{equation}
\end{proposition}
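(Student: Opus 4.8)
The plan is to reduce \eqref{free_wired_probs} to a single combinatorial identity relating cluster counts — obtained from Euler's formula and planar duality — after which the claim follows by a one-line computation with the random-cluster weights. The first step is to fix the duality bookkeeping: under the correspondence $e \mapsto e^{*}$ described above, $E^1(\tilde{\block}_n)$ is in bijection with $E^0(\block_n)$, so set $N := |E^1(\tilde{\block}_n)| = |E^0(\block_n)|$; since $w^{*}(e^{*}) = 1 - w(e)$, we have $\#_1(w) + \#_1(w^{*}) = N$, a constant independent of $w$.

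The main content is the cluster-count identity. I would form the planar multigraph $G$ obtained from $(\Z^2, E^1(\tilde{\block}_n))$ by contracting $\Z^2 \setminus \tilde{\block}_n$ to a single vertex; since $\Z^2 \setminus \tilde{\block}_n$ is connected, $k^1_{\tilde{\block}_n}(w)$ equals the number of connected components of the subgraph of $G$ on the open edges $\{e : w(e) = 1\}$. The cycle-rank formula $|F| = |V(G)| - k(H) + c(H)$ — valid for any spanning subgraph $H = (V(G),F)$ with $c(H)$ its first Betti number, proved by adding edges one at a time to a spanning forest — applied to the open subgraph of $w$ gives $k^1_{\tilde{\block}_n}(w) = |V(G)| - \#_1(w) + c(w)$. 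Next I would invoke planar duality on the sphere: the dual $G^{*}$ has edge set $E^0(\block_n)$ under the identification above, the open edges of $w$ and of $w^{*}$ are complementary, and a Jordan-curve/Euler argument shows that the cycle rank $c(w)$ of the $w$-open subgraph of $G$ equals $k^0_{\block_n}(w^{*}) - 1$. Since $|V(G)| = |\tilde{\block}_n| + 1$, combining these yields
\[
k^1_{\tilde{\block}_n}(w) = |\tilde{\block}_n| - \#_1(w) + k^0_{\block_n}(w^{*}).
\]

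With this identity in hand, I would finish by substituting into the unnormalized weight $\left(\frac{p}{1-p}\right)^{\#_1(w)} q^{k^1_{\tilde{\block}_n}(w)}$ of $\phi^{(1)}_{p,q,\tilde{\block}_n}$, replacing $\#_1(w)$ by $N - \#_1(w^{*})$, and absorbing into the normalization all factors that do not depend on $w$ (equivalently, on $w^{*}$). The remaining $w^{*}$-dependent factor is $\left(\frac{q(1-p)}{p}\right)^{\#_1(w^{*})} q^{k^0_{\block_n}(w^{*})}$, which, with $p^{*}$ defined by $\frac{p^{*}}{1-p^{*}} = \frac{q(1-p)}{p}$ as in \eqref{dual_prob}, is exactly the unnormalized weight of $w^{*}$ under $\phi^{(0)}_{p^{*},q,\block_n}$. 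Since $w \mapsto w^{*}$ is a bijection and both $\phi^{(1)}_{p,q,\tilde{\block}_n}$ and $\phi^{(0)}_{p^{*},q,\block_n}$ are probability measures, the proportionality constant is forced to be $1$, which gives \eqref{free_wired_probs}.

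The step I expect to be the main obstacle is the planar-duality bookkeeping at the boundary: verifying precisely that the wired condition on $\tilde{\block}_n$ dualises to the free condition on $\block_n$, that $E^1(\tilde{\block}_n)$ and $E^0(\block_n)$ really do have the same cardinality $N$, and that the two off-by-one constants ($|V(G)| = |\tilde{\block}_n| + 1$ and the $-1$ in $c(w) = k^0_{\block_n}(w^{*}) - 1$) come out exactly as stated rather than shifted — this is where all the care lies, since everything after the cluster-count identity is routine. As this is classical (it is \cite[Theorem 6.13]{1-grimmett}), an alternative is simply to cite it, but the Euler's-formula argument sketched above is short enough to reproduce.
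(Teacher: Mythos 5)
Your proposal is correct, and the paper itself gives no proof of this proposition — it is stated purely as a citation to Grimmett, and your sketch is exactly the classical Euler-formula/planar-duality argument from that reference. The boundary bookkeeping you flagged does check out: $|E^1(\tilde{\block}_n)| = |E^0(\block_n)| = 4n(2n+1)$, the contracted multigraph $G$ has $|V(G)| = |\tilde{\block}_n| + 1 = 4n^2+1$ vertices and (by Euler's formula on the sphere) exactly $(2n+1)^2 = |\block_n|$ faces, so its planar dual is precisely $(\block_n, E^0(\block_n))$, and the two off-by-one constants cancel to give $k^1_{\tilde{\block}_n}(w) = |\tilde{\block}_n| - \#_1(w) + k^0_{\block_n}(w^*)$ as you state.
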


The previous duality result can be generalized to more arbitrary shapes and it has also a counterpart from free-to-wired boundary conditions, instead of from wired-to-free.

The unique fixed point of the map $p \mapsto p^*$ defined by (\ref{dual_prob}) is $\frac{\sqrt{q}}{1+\sqrt{q}}$ and, as mentioned above, is known to coincide with the critical point $p_{\rm c}(q)$ for the existence of an infinite open cluster for the bond random-cluster model (see \cite[Theorem 1]{1-beffara}). It is easy to see that $p > p_{\rm c}(q)$ iff $p^* < p_{\rm c}(q)$.

\subsection{The site random-cluster model and the Widom-Rowlinson model}

In a similar fashion to the bond random-cluster model, we can perturb Bernoulli site percolation, where the probability measure is changed in favour of configurations with many (for $q > 1$) or few (for $q < 1$) connected components. The resulting model is called the \emph{site random-cluster model}.

\begin{definition}
Given $\Lambda \Subset \Z^2$, and parameters $p \in [0,1]$ and $q > 0$, the \emph{wired site random-cluster measure} $\psi^{(1)}_{p,q,\Lambda}$ is the probability measure on $\{0,1\}^{\Lambda}$ which to each $\theta \in \{0,1\}^{\Lambda}$ assigns probability proportional to:
\begin{equation}
\label{siteRC}
\psi^{(1)}_{p,q,\Lambda}(\theta) \propto \left\{\prod_{x \in \Lambda} p^{\theta(x)}(1-p)^{1-\theta(x)}\right\} q^{\kappa_\Lambda(\theta)} =\lambda^{\#_1(\theta)} q^{\kappa_\Lambda(\theta)},
\end{equation}
where $\lambda = \frac{p}{1-p}$, $\#_1(\theta)$ is the number of $1$'s in $\theta$ and $\kappa_\Lambda(\theta)$ is the number of connected components in $\{x \in \Lambda: \theta(x) = 1\}$ that do not intersect $\underline{\partial} \Lambda$.
\end{definition}

The \emph{free site random-cluster measure} $\psi^{(0)}_{p,q,\Lambda}$ is defined as in (\ref{siteRC}) by replacing $\kappa_\Lambda(\theta)$ by the total number of connected components in $\Lambda$. However, we will not require that measure in this work. In any case, taking $q = 1$ gives the ordinary Bernoulli site percolation $\psi_{p,\Lambda}$, while other choices of $q$ lead to dependence between sites, similarly to the bond random-cluster model.

\begin{proposition}
\label{WR_cluster}
Given a set $\Lambda \Subset \Z^2$ and parameters $\lambda > 0$ and $q \in \N$, consider the Widom-Rowlinson with $q$ types distribution and monochromatic boundary condition $\pi_{\lambda,\Lambda}^{\omega_q}$. Now, let $f:\SymbWR^\Lambda \to \{0,1\}^\Lambda$ be defined site-wise as:
\begin{equation}
(f(\theta))(x) =
\begin{cases}
0	&	\mbox{if } \theta(x) = 0,		\\
1	&	\mbox{if } \theta(x) \neq 0,
\end{cases}
\end{equation}
for $\theta \in \SymbWR^\Lambda$ and $x \in \Lambda$, and let $p = \frac{\lambda}{1+\lambda}$. Then, $f_*\pi_{\lambda,\Lambda}^{\omega_q} = \psi^{(1)}_{p,q,\Lambda}$, where $f_*\pi_{\lambda,\Lambda}^{\omega_q}(\cdot) := \pi_{\lambda,\Lambda}^{\omega_q}(f^{-1}(\cdot))$ denotes the push-forward measure on $\{0,1\}^\Lambda$.
\end{proposition}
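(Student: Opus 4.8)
The plan is to unwind both sides into explicit un-normalised weights on configurations and match them. First I would record the explicit form of the left-hand side. Since the Widom--Rowlinson interaction $\Phi_\lambda$ has no bond term, the energy of a feasible configuration $\theta \in \SymbWR^\Lambda$ with boundary condition $\xi := \omega_q(\partial\Lambda) = q^{\partial\Lambda}$ is simply $\EFunc_{\overline{\Lambda}}(\theta\xi) = -\log(\lambda)\bigl(\#_1(\theta) + |\partial\Lambda|\bigr)$, where $\#_1(\theta)$ is the number of $x \in \Lambda$ with $\theta(x) \neq 0$. Hence $\pi_{\lambda,\Lambda}^{\omega_q}(\theta)$ is proportional to $\lambda^{\#_1(\theta)}$ over all $\theta$ with $\theta\xi$ feasible, the factor $\lambda^{|\partial\Lambda|}$ being absorbed into the normalisation.

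Next, fix $\tau \in \{0,1\}^\Lambda$, write $O_\tau := \{x \in \Lambda : \tau(x) = 1\}$, and let $N(\tau)$ be the number of $\theta \in f^{-1}(\tau)$ with $\theta\xi$ feasible. Since every $\theta \in f^{-1}(\tau)$ has $\#_1(\theta) = \#_1(\tau)$, summing the previous observation over $f^{-1}(\tau)$ gives
\[
f_*\pi_{\lambda,\Lambda}^{\omega_q}(\tau) \;\propto\; \lambda^{\#_1(\tau)}\, N(\tau).
\]
The whole proof then reduces to the combinatorial identity $N(\tau) = q^{\kappa_\Lambda(\tau)}$. To obtain it, I would analyse the feasibility constraints on a $\theta$ with $f(\theta) = \tau$: the constraint forbids only bonds whose two endpoints carry distinct nonzero colours, so the only effective conditions are (i) on each bond inside $O_\tau$, forcing $\theta$ to be constant on every connected component of $O_\tau$, and (ii) on each bond joining $O_\tau$ to $\partial\Lambda$, forcing any component of $O_\tau$ that meets the inner boundary $\underline{\partial}\Lambda$ to take colour $q$ (all of $\partial\Lambda$ carries colour $q$); bonds incident to an empty site impose nothing. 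As distinct components of $O_\tau$ are pairwise non-adjacent, these choices are independent across components, so a feasible $\theta \in f^{-1}(\tau)$ is exactly a choice of a colour in $\{1,\dots,q\}$ for each connected component of $O_\tau$ disjoint from $\underline{\partial}\Lambda$. This gives $N(\tau) = q^{\kappa_\Lambda(\tau)}$; in particular $N(\tau) \geq 1$, so both measures have full support on $\{0,1\}^\Lambda$.

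Finally, combining the two displays, $f_*\pi_{\lambda,\Lambda}^{\omega_q}(\tau)$ is proportional to $\lambda^{\#_1(\tau)} q^{\kappa_\Lambda(\tau)}$; with $p = \frac{\lambda}{1+\lambda}$ one has $\lambda = \frac{p}{1-p}$, so this is precisely the un-normalised weight defining $\psi^{(1)}_{p,q,\Lambda}$ in (\ref{siteRC}). Since $f_*\pi_{\lambda,\Lambda}^{\omega_q}$ and $\psi^{(1)}_{p,q,\Lambda}$ are both probability measures on the finite set $\{0,1\}^\Lambda$ and agree up to a common multiplicative constant, they coincide. The step I would be most careful about is the bookkeeping in $N(\tau) = q^{\kappa_\Lambda(\tau)}$: one must verify that a component of $O_\tau$ is forced to colour $q$ exactly when it meets $\underline{\partial}\Lambda$, which uses that the sites of $\Lambda$ adjacent to $\partial\Lambda$ are precisely $\underline{\partial}\Lambda$, together with the fact that the specification only involves bonds inside $\overline{\Lambda}$.
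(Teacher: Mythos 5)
Your proof is correct and is essentially the standard argument: the paper itself does not write out a proof but defers to \cite[Lemma 5.1 (ii)]{1-higuchi} for $q=2$, noting it extends to general $q$, and your computation (unnormalised weight $\lambda^{\#_1(\tau)}$ times the count $N(\tau)=q^{\kappa_\Lambda(\tau)}$ of feasible colourings, with components meeting $\underline{\partial}\Lambda$ forced to colour $q$) is exactly that extension. The one bookkeeping point you flag is handled correctly, since a component of $O_\tau$ is adjacent to a site of $\partial\Lambda$ precisely when it intersects $\underline{\partial}\Lambda$, matching the definition of $\kappa_\Lambda$ in the wired site random-cluster measure.
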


The requirement that $\kappa_\Lambda(\cdot)$ does not count connected components that intersect the inner boundary of $\Lambda$ in the site random-cluster model, corresponds to the fact that non $0$ sites adjacent to the monochromatic boundary $\omega_q(\partial \Lambda)$ in the Widom-Rowlinson model must have the same colour $q$.

For $q=2$, Proposition \ref{WR_cluster} is proven in \cite[Lemma 5.1 (ii)]{1-higuchi}, and the proof extends easily for general $q$. Proposition \ref{WR_cluster} can be regarded as a coupling between $\pi_{\lambda,\Lambda}^{\omega_q}$ and $\psi^{(1)}_{p,q,\Lambda}$, because a push-forward measure can be naturally coupled with the original measure.

It is important to notice that $\psi^{(1)}_{p,q,\Lambda}$ is itself not an MRF: given sites on a simple circuit $\Circ$, the inside and outside of $\Circ$ are generally not conditionally independent, because knowledge of sites outside $\Circ$ could cause connected components of $1$'s in $\Circ$ to ``amalgamate'' into a single component, which would affect the conditional distribution of configurations inside $\Circ$. The following lemma shows that in certain situations, when conditioning on a circuit $\Circ$ labeled entirely by $1$'s, this kind of amalgamation does not occur.

\begin{lemma}
\label{pseudoMRF}
Let $\emptyset \neq \Theta \subseteq \Lambda \Subset \Z^2$ be such that $\Lambda^c \cup \overline{\Theta}^\star$ is connected. Take $\Delta := \partial^\star \Theta \cap \Lambda$. Consider an event $A \in \mathcal{F}_\Theta$ and a configuration $\tau \in \{0,1\}^{\Sigma}$, where $\Sigma \subseteq \Lambda \setminus \overline{\Theta}^\star$. Then:
\begin{equation}
\psi^{(1)}_{p,q,\Lambda}(A \vert 1^{\Delta} \tau) = \psi^{(1)}_{p,q,\Lambda}(A \vert 1^{\Delta} 0^{\Lambda \setminus \overline{\Theta}^\star}).
\end{equation}
\end{lemma}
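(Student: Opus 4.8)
The plan is to show that the conditional distribution of $\psi^{(1)}_{p,q,\Lambda}$ on the inside of the $\star$-collar $\Delta = \partial^\star\Theta \cap \Lambda$, given that $\Delta$ is entirely labelled by $1$'s, does not depend on the configuration $\tau$ outside $\overline{\Theta}^\star$. The key geometric fact to exploit is that a ring of $1$'s on $\Delta$ (which is $\star$-connected, being the $\star$-boundary of a set) together with any $1$'s inside $\Theta$ forms connected components of $1$'s whose connectivity is unaffected by what happens outside $\overline{\Theta}^\star$: any component of $1$'s meeting $\Theta$ is joined through $\Delta$ to one big component, and since $\Delta$ sites are not counted toward $\kappa_\Lambda$ when they touch $\underline{\partial}\Lambda$ — more precisely, the relevant point is that the amalgamation phenomenon described before the lemma cannot happen here. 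So the first step is to write the conditional probability $\psi^{(1)}_{p,q,\Lambda}(A \mid 1^\Delta\tau)$ explicitly as a ratio of sums over configurations $\theta$ on $\Theta$ (the free coordinates, with $A$ determining which $\theta$ we sum over in the numerator), weighted by $\lambda^{\#_1(\theta)} q^{\kappa_\Lambda(1^\Delta\theta\tau\cdots)}$, where the remaining coordinates in $\Lambda \setminus (\Theta \cup \Delta \cup \Sigma)$ are also summed.

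Second, I would analyze the exponent $\kappa_\Lambda$ of the configuration $1^\Delta \theta \tau \rho$, where $\rho$ ranges over $\{0,1\}^{\Lambda \setminus (\overline{\Theta}^\star \cup \Sigma)}$ and $\theta \in \{0,1\}^\Theta$. The claim I need is that $\kappa_\Lambda(1^\Delta\theta\tau\rho)$ decomposes as a sum of two terms: one depending only on $(\theta)$ — counting components of $1$'s contained in $\Theta \cup \Delta$, but note these all get absorbed into a single component because $\Delta$ is $\star$-connected and every such component must touch $\Delta$ — plus one term depending only on $(\tau,\rho)$ counting components of $1$'s living in $\Lambda \setminus \overline{\Theta}^\star$. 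The hypothesis ``$\Lambda^c \cup \overline{\Theta}^\star$ is connected'' is what guarantees that the big amalgamated component containing $\Delta$ actually does touch $\underline{\partial}\Lambda$ (via a path inside $\overline{\Theta}^\star$ to $\Lambda^c$... wait, more carefully: it ensures $\overline{\Theta}^\star$ reaches the inner boundary of $\Lambda$ so that this component is not counted in $\kappa_\Lambda$), so that the $\theta$-dependence of $q^{\kappa_\Lambda}$ is precisely $q^{(\text{number of components of }1\text{'s strictly inside }\Theta\text{ not touching }\Delta)}$ — but actually since they all touch $\Delta$, this contributes a constant, or rather: the point is that $\kappa_\Lambda(1^\Delta\theta\tau\rho) = c(\theta) + \kappa'(\tau,\rho)$ where $c(\theta)$ does not interact with $(\tau,\rho)$. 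Crucially, no component of $1$'s inside $\Theta$ can reach outside $\overline{\Theta}^\star$ without crossing $\Delta$, and any two that reach $\Delta$ are already merged there, so adjusting $\tau$ cannot merge or split components on the $\Theta$ side.

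Third, with that decomposition in hand, the $\lambda^{\#_1}$ factor splits multiplicatively as $\lambda^{\#_1(\theta)}\lambda^{\#_1(\tau)+\#_1(\rho)}$, and the $q^{\kappa_\Lambda}$ factor splits as $q^{c(\theta)}q^{\kappa'(\tau,\rho)}$. When forming the ratio $\psi^{(1)}_{p,q,\Lambda}(A \mid 1^\Delta\tau) = \dfrac{\sum_{\theta \in A}\sum_\rho \lambda^{\#_1(\theta)}q^{c(\theta)}\lambda^{\#_1(\rho)}q^{\kappa'(\tau,\rho)}}{\sum_{\theta}\sum_\rho \lambda^{\#_1(\theta)}q^{c(\theta)}\lambda^{\#_1(\rho)}q^{\kappa'(\tau,\rho)}}$, the entire $(\tau,\rho)$-dependent sum $\sum_\rho \lambda^{\#_1(\rho)}q^{\kappa'(\tau,\rho)}$ factors out of both numerator and denominator and cancels, leaving $\dfrac{\sum_{\theta \in A}\lambda^{\#_1(\theta)}q^{c(\theta)}}{\sum_{\theta}\lambda^{\#_1(\theta)}q^{c(\theta)}}$, which is manifestly independent of $\tau$. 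Taking the particular choice $\tau = 0^{\Lambda \setminus \overline{\Theta}^\star}$ on all of $\Lambda \setminus \overline{\Theta}^\star$ (i.e. $\Sigma = \Lambda \setminus \overline{\Theta}^\star$) gives the right-hand side of the lemma, and the equality follows.

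The main obstacle is the second step: rigorously establishing the additivity $\kappa_\Lambda(1^\Delta\theta\tau\rho) = c(\theta) + \kappa'(\tau,\rho)$, i.e. that no ``amalgamation across $\Delta$'' occurs. This requires a careful graph-theoretic argument showing (i) any $\star$-path, hence any connectivity of $1$'s, from inside $\Theta$ to outside $\overline{\Theta}^\star$ must pass through $\Delta = \partial^\star\Theta \cap \Lambda$; (ii) therefore the single component of $1$'s containing all of $\Delta$ absorbs every $1$ inside $\Theta$ that is connected to $\Delta$, and is ``severed'' from the $1$'s in $\Lambda \setminus \overline{\Theta}^\star$ in the sense that changing those outside $1$'s neither merges nor splits components on the $\Theta$-side; and (iii) this $\Delta$-component is not counted by $\kappa_\Lambda$ — here is where ``$\Lambda^c \cup \overline{\Theta}^\star$ connected'' enters, ensuring $\overline{\Theta}^\star$ (hence the $\Delta$-component, once we note $\Delta$ together with any $\star$-neighbours reaching toward $\Lambda^c$) touches $\underline{\partial}\Lambda$, so it is excluded uniformly regardless of $\tau$. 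One must be slightly careful that $\kappa_\Lambda$ counts ordinary (not $\star$-) connected components of $1$'s, while the collar $\Delta$ is a $\star$-boundary; the interplay is exactly the standard planar-duality-flavoured fact that $\star$-connectivity of a separating set blocks ordinary connectivity across it, so this is where I would spend the most care.
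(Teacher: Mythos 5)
Your overall strategy is the same as the paper's: reduce to the case $\Sigma = \Lambda \setminus \overline{\Theta}^\star$ by summing over the remaining coordinates, establish an additive decomposition of $\kappa_\Lambda$ into a piece depending only on the configuration on $\Theta$ and a piece depending only on the configuration outside $\overline{\Theta}^\star$, and then cancel the outside factor in the ratio defining the conditional probability. The first and third steps are fine. The problem is that the second step --- which you yourself flag as ``the main obstacle'' --- is exactly the content of the lemma, and the heuristics you offer for it are not correct as stated, so the proposal has a genuine gap rather than a complete proof.

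Concretely: (i) your assertion that any two components of $1$'s in $\Theta$ reaching $\Delta$ ``are already merged there'' presupposes that $\Delta = \partial^\star\Theta \cap \Lambda$ is connected in the \emph{ordinary} sense, which can fail (for instance when $\Lambda^c$ cuts into $\partial^\star\Theta$ at two diagonally placed sites, $\Delta$ splits into two ordinary components); in that situation a path of $1$'s of $\tau$ running through $\Sigma$ could a priori merge two components that are distinct in $\theta 1^\Delta 0^\Sigma$, changing $\kappa_\Lambda$ by $1$ and destroying the factorization. (ii) The component containing $\Delta$ is \emph{not} ``severed'' from the $1$'s in $\Lambda \setminus \overline{\Theta}^\star$: a $1$ of $\tau$ adjacent to a site of $\Delta$ lies in the very same ordinary component, so one must also rule out that attaching such $\tau$-pieces changes whether that component meets $\underline{\partial}\Lambda$ and hence whether it is counted. (iii) Your reading of the hypothesis is off: connectedness of $\Lambda^c \cup \overline{\Theta}^\star$ does not guarantee that the component containing $\Delta$ meets $\underline{\partial}\Lambda$ (the union may reach $\Lambda^c$ through a site of $\Theta$ carrying a $0$, or through a $\star$-step that is not an ordinary step), so you cannot conclude that this component is ``excluded uniformly regardless of $\tau$''. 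What the paper actually does with the hypothesis is different: it decomposes $\Lambda \setminus \overline{\Theta}^\star$ into $\star$-connected components $K_1,\dots,K_n$, notes that $\partial^\star K_i \subseteq \Lambda^c \cup \Delta$ and that every site of $\partial^\star K_i$ admits a path to infinity avoiding $K_i$ (this is precisely where connectedness of $\Lambda^c \cup \overline{\Theta}^\star$ enters), invokes Kesten's lemma to conclude that each $\partial^\star K_i$ is connected, and then uses that connectedness to build an explicit bijection between the components counted by $\kappa_\Lambda(\upsilon)$ and those counted by $\kappa_\Lambda(\upsilon(\Theta)1^\Delta 0^\Sigma)$ together with the $\kappa_{K_i}(\upsilon(K_i))$. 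That bijection is exactly the ``careful graph-theoretic argument'' your sketch defers, and without it the claimed identity $\kappa_\Lambda(1^\Delta\theta\tau\rho) = c(\theta) + \kappa'(\tau,\rho)$ remains unproved.
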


\begin{proof}
W.l.o.g., we may assume that  $A$ is a cylinder event $[\theta]$ with $\theta \in \{0,1\}^\Theta$ (by linearity) and $\Sigma = \Lambda \setminus \overline{\Theta}^\star$ (by taking weighted averages).

Now, $\Sigma = \Lambda \setminus \overline{\Theta}^\star$ can be written as a disjoint union of $\star$-connected components $\Sigma = K_1 \sqcup \cdots \sqcup K_n$. For every $i$, $\partial^\star K_i \subseteq \Lambda^c \cup \overline{\Theta}^\star$ (in fact, $\partial^\star K_i \subseteq \Lambda^c \cup \Delta$). Since $\Lambda^c \cup \overline{\Theta}^\star$ is connected and $\Lambda$ is finite, for every site in $\partial^\star K_i$ there is a path to infinity that does not intersect $K_i$.

Then, by application of a result of Kesten (see \cite[Lemma 2.23]{1-kesten}), $\partial^\star K_i$ is connected, for every $i$. In addition, we have that $\Lambda = \Theta \sqcup \Delta \sqcup \Sigma$ and $\partial^\star K_i \subseteq \Lambda^c \cup \Delta$.

We claim that:
\begin{equation}
\label{decomposition}
\kappa_{\Lambda}(\upsilon) =  \kappa_{\Lambda}(\upsilon(\Theta)1^{\Delta} 0^{\Sigma}) + \sum_{i=1}^n\kappa_{K_i}(\upsilon(K_i)) = \kappa_{\Lambda}(\upsilon(\Theta)1^{\Delta} 0^{\Sigma}) + \kappa_{\Sigma}(\tau),
\end{equation}
for any $\upsilon \in \{0,1\}^\Lambda$ such that $\upsilon(\Delta) = 1^{\Delta}$ and $\upsilon(\Sigma) = \tau$.

\begin{figure}[ht]
\label{lemma1-pic}
\centering
\includegraphics[scale = 0.65]{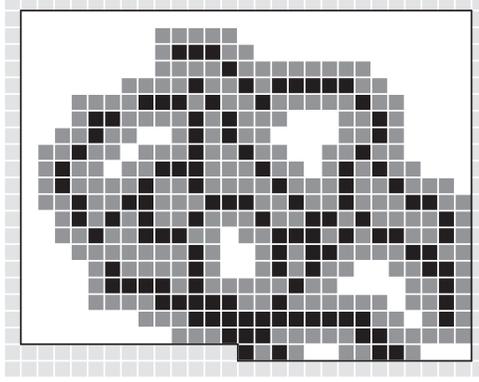}
\caption{A $\star$-connected $\Theta$ (in black), the set $\Delta = \partial^\star \Theta \cap \Lambda$ (in dark grey) and $\Lambda^c$ (in light grey) for $\Lambda = S_{y,z}$.}
\end{figure} 

To see this, given such $\upsilon$, we exhibit a bijection $r$ between the connected components of $\upsilon$ that do not intersect $\underline{\partial} \Lambda$ and the union of: (a) the connected components of $\upsilon(\Theta)1^\Delta 0^\Sigma$ that do not intersect $\underline{\partial} \Lambda$, and (b) the connected components of $\upsilon(K_i)$ that do not intersect $\underline{\partial} K_i$, for all $i$; namely, if $C \subseteq \Lambda$ is a connected component of $\upsilon$, then $r$ is defined as follows:
\begin{equation}
r(C) =
\begin{cases}
C \cap \overline{\Theta}^\star	& \mbox{ if } C \cap \overline{\Theta}^\star \neq \emptyset,		\\
C						& \mbox{ if } C \subseteq \Sigma.
\end{cases}
\end{equation}

In order to see that $r$ is well-defined, note that if $C$ intersects $\overline{\Theta}^\star$ and $\Sigma$, the set $C \cap \overline{\Theta}^\star$ is still connected thanks to the fact that $\partial^\star K_i$ is connected and $\upsilon(\Delta) = 1^{\Delta}$. To see that $r$ is onto, observe that if $C'$ is a connected component of $\upsilon(\Theta)1^\Delta 0^\Sigma$, then there is a unique component $C$ of $\upsilon$ such that $C \cap \overline{\Theta}^\star = C'$, due again to the fact that $\partial^\star K_i$ is connected. And $r$ is clearly injective because two distinct connected components cannot intersect.

Finally, we conclude from (\ref{decomposition}) that:
\begin{align}
\psi^{(1)}_{p,q,\Lambda}(\theta \ | \ 1^{\Delta} \tau)	&	=	\frac{\lambda^{\#_1(\theta 1^{\Delta} \tau)} q^{\kappa_{\Lambda}(\theta 1^{\Delta} \tau)}}{\sum_{\upsilon \in \{0,1\}^{\Lambda}: \upsilon(\Delta) = 1^{\Delta}, \upsilon(\Sigma) = \tau} \lambda^{\#_1(\upsilon)} q^{\kappa_{\Lambda}(\upsilon)}}	\\
										&	=	\frac{\lambda^{\#_1(\theta 1^\Delta) + \#_1(\tau)} q^{\kappa_{\Lambda}(\theta 1^\Delta 0^{\Sigma}) + \kappa_{\Sigma}(\tau)}}{\sum_{\upsilon \in \{0,1\}^{\Lambda}: \upsilon(\Sigma) = \tau} \lambda^{\#_1(\upsilon(\Theta) 1^\Delta) + \#_1(\tau)} q^{\kappa_{\Lambda}(\upsilon(\Theta) 1^\Delta 0^{\Sigma}) + \kappa_{\Sigma}(\tau)}}	\\
										&	=	\frac{\lambda^{\#_1(\theta 1^\Delta)} q^{\kappa_{\Lambda}(\theta 1^\Delta 0^{\Lambda \setminus \overline{\Theta}^\star})}}{\sum_{\tilde{\theta} \in \{0,1\}^{\Theta}} \lambda^{\#_1(\tilde{\theta} 1^\Delta)} q^{\kappa_{\Lambda}(\tilde{\theta} 1^\Delta 0^{\Lambda \setminus \overline{\Theta}^\star})}}	=	\psi^{(1)}_{p,q,\Lambda}(\theta \vert 1^{\Delta} 0^{\Lambda \setminus \overline{\Theta}^\star}),
\end{align}
as we wanted.
\end{proof}

\begin{remark}
\label{notePseudoMRF}
We claim that if $\emptyset \neq \Theta \subseteq \Lambda \Subset \Z^2$ are such that $\Lambda^c$ is connected, $\Theta$ is $\star$-connected and $\overline{\Theta}^\star \cap \underline{\partial} \Lambda \neq \emptyset$, then $\Lambda^c \cup \overline{\Theta}^\star$ is connected, which is the main hypothesis of Lemma \ref{pseudoMRF}. This follows from the easy fact that the $\star$-closure of
a $\star$-connected set is connected.
\end{remark}

%%%%%%%%%%%%%%%%%%%%%%%%%%%%%%%%%%%%%%%%%%%%%%%%%%

\section{Pressure representation}
\label{section6}

\subsection{Variational principle}

The variational principle states that the pressure of an interaction has a variational characterization in terms of shift-invariant measures. We state the variational principle below for the case of an n.n. interaction $\Phi$ for a set of restrictions $\mathcal{E}$.

\begin{theorem}[Variational principle \cite{1-keller,1-misiurewicz,1-ruelle}]
\label{VarPrin}
Given a n.n. interaction $\Phi$ for a set of restrictions $\mathcal{E}$, we have that:
\begin{equation}
\Press(\Phi) = \sup_{\mu \in \mathcal{M}_{1,\sigma}(\Omega(\mathcal{E}))}\left(h(\mu) + \int{A_\Phi}d\mu\right),
\end{equation}
where:
\begin{itemize}
\item $A_\Phi(\omega) :=  -\Phi\left(\omega(\veczero)\right) - \sum_{i=1}^{d} \Phi\left(\omega(\{\veczero,e_i\})\right)$, for $\omega \in \Omega(\mathcal{E})$, and
\item $h(\mu) := \lim_{n \rightarrow \infty} \frac{-1}{|\block_n|} \sum_{\theta \in \Symb^{\block_n}} \mu(\theta)\log(\mu(\theta))$ is the \emph{measure-theoretic entropy} of $\mu$, where $0 \log 0 = 0$.
 \end{itemize}
\end{theorem}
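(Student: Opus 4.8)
The plan is to deduce this from the standard variational principle for topological pressure on subshifts, as found in \cite{1-keller} or \cite{1-misiurewicz,1-ruelle}, by identifying the two sides of the asserted identity with the two sides of the classical theorem. First I would recall that for a subshift $\Omega \subseteq \Symb^{\Z^d}$ and a continuous potential $f \colon \Omega \to \R$, the topological pressure $P_{\mathrm{top}}(f)$ is defined via exponential growth rates of partition sums $\sum_{\theta} \exp\bigl(\sup_{[\theta]} S_n f\bigr)$ over configurations on $\block_n$ (or via $(n,\varepsilon)$-separated sets), and that the classical variational principle asserts $P_{\mathrm{top}}(f) = \sup_{\mu \in \Mcal_{1,\sigma}(\Omega)} \bigl( h(\mu) + \int f \, d\mu \bigr)$. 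The work is then entirely in two identifications: (i) that $P_{\mathrm{top}}(A_\Phi)$, computed on $\Omega(\mathcal{E})$ with the potential $A_\Phi$, equals $\Press(\Phi)$ as defined in Definition \ref{pressure}; and (ii) that the entropy term $h(\mu)$ appearing there coincides with the measure-theoretic entropy as we have defined it (which, for $\Z^d$-subshifts, is the standard Kolmogorov–Sinai entropy per unit volume). Identification (ii) is routine for $\Z^d$-actions and $A_\Phi$ is manifestly continuous (it depends on finitely many coordinates), so the crux is (i).

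For identification (i), the key computation is to compare $\EFunc_{\block_n}(\theta)$ with the Birkhoff sum $S_{\block_n} A_\Phi := \sum_{x \in \block_n} A_\Phi \circ \sigma_x$ evaluated along a point extending $\theta$. Writing out $A_\Phi(\sigma_x \omega) = -\Phi(\omega(x)) - \sum_{i=1}^d \Phi(\omega(\{x, x+e_i\}))$ and summing over $x \in \block_n$, one sees that $\sum_{x \in \block_n} A_\Phi(\sigma_x \omega) = -\EFunc_{\block_n}(\omega(\block_n)) + O(|\partial \block_n|)$, since the only discrepancies are the single-site and bond terms associated to $x \in \block_n$ with $x + e_i \notin \block_n$, of which there are $O(n^{d-1})$, and each $\Phi$-value is bounded (the interaction is determined by finitely many numbers). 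Because $|\partial \block_n| / |\block_n| \to 0$, this boundary error is negligible in the normalized limit. Hence $\frac{1}{|\block_n|} \log \sum_{\theta \text{ feasible}} \exp(S_{\block_n} A_\Phi) $ and $\frac{1}{|\block_n|}\log \mathrm{Z}^\Phi_{\block_n}$ have the same limit. One also needs that restricting the partition sum to \emph{globally} admissible $\theta$ (as in the separated-set / topological-pressure formulation over $\Omega(\mathcal{E})$) does not change the limit — but this is exactly the content of Theorem \ref{friedland}, which I would invoke directly. A minor additional point is that $\sup_{[\theta]^{\Omega(\mathcal{E})}} S_{\block_n} A_\Phi$ versus the value at one chosen extension differ by at most $O(|\partial \block_n|)$ as well, since $A_\Phi$ only sees coordinates in $\overline{\block_n}$, so whichever flavour of topological pressure one starts from gives the same answer.

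The main obstacle — really the only subtle point — is bookkeeping the boundary terms carefully enough to be sure they are genuinely $o(|\block_n|)$ uniformly, and making sure the ``locally admissible vs.\ globally admissible'' gap is handled; both are dispatched by boundedness of $\Phi$ together with Theorem \ref{friedland}. Everything else is a citation to the classical variational principle \cite{1-keller,1-misiurewicz,1-ruelle}. I would therefore structure the proof as: (1) note $A_\Phi \in C(\Omega(\mathcal{E}))$ and invoke the classical variational principle to get $P_{\mathrm{top}}(A_\Phi) = \sup_\mu (h(\mu) + \int A_\Phi \, d\mu)$; (2) show $P_{\mathrm{top}}(A_\Phi) = \Press(\Phi)$ via the Birkhoff-sum/energy comparison above plus Theorem \ref{friedland}; (3) conclude. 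Steps (1) and (3) are immediate; step (2) is the one paragraph of real content.
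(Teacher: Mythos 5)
The paper does not prove this statement at all: it is quoted as a classical result with an attribution to \cite{1-keller,1-misiurewicz,1-ruelle}, so there is no in-paper argument to compare against. Your reduction --- continuity of $A_\Phi$, the classical variational principle for $\Z^d$-subshifts, the comparison of $\EFunc_{\block_n}$ with the Birkhoff sum up to an $O(|\partial \block_n|)$ boundary error, and Theorem \ref{friedland} to close the locally-versus-globally admissible gap --- is exactly the standard derivation those citations are meant to supply, and it is correct.
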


In this case, the supremum is also always achieved (see \cite[Section 4.2]{1-keller}) and any measure which achieves the supremum is called an \emph{equilibrium state} for $A_\Phi$. So, if $\mu$ is an equilibrium state, then:
\begin{equation}
\Press(\Phi) = h(\mu) + \int{A_\Phi}d\mu.
\end{equation}

For a shift-invariant measure $\mu$ and $\Lambda \Subset \Z^d \setminus \{\veczero\}$, define:
\begin{equation}
p_{\mu, \Lambda}(\omega):= \mu(\omega(\veczero) | \omega(\Lambda)),
\end{equation}
and let $p_\mu(\omega) := \lim_{n \rightarrow \infty} p_{\mu, \block_n \cap \past}(\omega)$, which exists $\mu$-a.s.~\cite[Theorem 3.1.10]{1-keller} by L\'evy's zero-one law. In addition, let:
\begin{equation}
I_\mu(\omega): = - \log p_\mu(\omega),
\end{equation}
which is also defined $\mu$-a.s. and is usually called the \emph{information function}. It is well-known (see \cite[p. 318, Equation 15.18]{1-georgii} or \cite[Theorem 2.4, p. 283]{1-krengel}) that for any shift-invariant measure $\mu$, $h(\mu) = \int{I_\mu}d\mu$. Therefore, if $\mu$ is an equilibrium state for $\Phi$, we can rewrite the preceding formula for $\Press(\Phi)$ as:
\begin{equation}
\Press(\Phi) = \int{\left(I_\mu + A_\Phi\right)}d\mu.
\end{equation}

So, the pressure can be represented as the integral of a function, determined by an equilibrium state $\mu$ and $\Phi$, with respect to $\mu$.

In this section, we show that the pressure can be represented as the integral of a function similar to $I_\mu + A_\Phi$, with respect to any invariant measure $\nu$, assuming some conditions. This is useful for approximation of pressure when $\nu$ is an atomic measure supported on a periodic configuration (see Section \ref{section9}).

One of the conditions involves the SFT $\Omega(\mathcal{E})$.

\begin{definition}
\label{Dcond}
A n.n. SFT $\Omega(\mathcal{E})$ for a set of constraints $\mathcal{E}$ satisfies the \emph{square block D-condition} if there exists a sequence of integers $\{r_n\}_{n \geq 1}$ such that $\frac{r_n}{n} \to 0$ as $n \to \infty$ and, for any finite set $\Lambda \Subset \block_{n+r_n}^c$, $\theta \in \Symb^{\block_n}$ and $\tau \in \Symb^\Lambda$:
\begin{equation}
[\theta]^{\Omega(\mathcal{E})}, [\tau]^{\Omega(\mathcal{E})} \neq \emptyset \implies [\theta\tau]^{\Omega(\mathcal{E})} \neq \emptyset.
\end{equation}
\end{definition}

This condition is a strengthened version of the classical D-condition (see \cite[Section 4.1]{1-ruelle}) which guarantees that the set of Gibbs measures for $\Phi$ coincides with the set of equilibrium states for $A_\Phi$.

\begin{definition}
Given a set of restrictions $\mathcal{E}$, the corresponding n.n. SFT $\Omega(\mathcal{E}) \subseteq \Symb^{\Z^d}$ and $a \in \Symb$, we say that $\Omega(\mathcal{E})$ has a \emph{safe symbol} $a$ if $(a,b), (b,a) \notin \mathcal{E}_i$, for every $b \in \Symb$, for all $i = 1,\dots,d$.
\end{definition}

It is easy to see that if $\Omega(\mathcal{E})$ has a safe symbol, then it satisfies the square block D-condition. For the sets of restrictions $\mathcal{E}$ in the Potts, Widom-Rowlinson and hard-core models, the corresponding n.n. SFT $\Omega(\mathcal{E})$ has a safe symbol in each case (any $a \in \Symb_q$, $0 \in \SymbWR$, and $0 \in \{0,1\}$, respectively), so $\Omega(\mathcal{E})$ satisfies the square block D-condition for the three models.

\subsection{The function $\hat{\pi}$ and additional notation}
\label{I-hat}

Given a n.n. interaction $\Phi$ for a set of constraints $\mathcal{E}$, we will define some useful functions from $\Omega(\mathcal{E})$ to $\R$. First, given $\veczero \in \Lambda \Subset \Z^d$ and $\omega \in \Omega(\mathcal{E})$, we define:
\begin{equation}
\pi_{\Lambda}(\omega) := \pi_{\Lambda}^{\omega}(\theta(\veczero) = \omega(\veczero)) = \pi_{\Lambda}^{\omega(\partial \Lambda)}(\theta(\veczero) = \omega(\veczero)).
\end{equation}

Recall that, for $y,z \in \Z^d$ such that $y,z \geq \veczero$, we have defined the set $S_{y,z}$ as $\{x \succcurlyeq \veczero: -y \leq x \leq z\}$. Now, given $y,z \geq \veczero$ and $\omega \in \Omega(\mathcal{E})$, define $\pi_{y,z}(\omega) := \pi_{S_{y,z}}(\omega)$ and, given $n \in \N$, abbreviate $\pi_n(\omega) := \pi_{\vecone n, \vecone n}(\omega)$. Considering this, we also define the limit $\hat{\pi}(\omega) := \lim_{n \rightarrow \infty} \pi_n(\omega)$, whenever it exists. If such limit exists, we will also denote $\hat{I}_{\pi}(\omega) := -\log \hat{\pi}(\omega)$.

It is not difficult to prove that under some mixing assumptions over an MRF $\mu$, namely the SSM property introduced in Definition \ref{SSMspec} (see Section \ref{section7}), and assuming that $\supp(\mu) = \Omega(\mathcal{E})$, one has that the original information function $I_\mu$ coincides with $\hat{I}_\pi$ in $\Omega(\mathcal{E})$. In this sense, our definition provides a generalization of previous results (see \cite{1-gamarnik}), where $I_\mu$ may not be even well-defined.

Now, suppose we have a shift-invariant measure $\nu$ such that $\supp(\nu) \subseteq \Omega(\mathcal{E})$. We say that:
\begin{equation}
\lim_{y,z \rightarrow \infty} \pi_{y,z}(\omega) = \hat{\pi}(\omega) \mbox{ uniformly over } \omega \in\supp(\nu),
\end{equation}
if for all $\varepsilon > 0$, exists $k \in \N$ such that for all $\forall y,z \geq \vecone k$:
\begin{equation}
\left| \pi_{y,z}(\omega) - \hat{\pi}(\omega) \right| < \varepsilon,\mbox{ for all $\omega \in \supp(\nu)$}.
\end{equation}

In addition, we introduce the following bound:
\begin{equation}
\mathrm{c}_\pi(\nu) := \inf\{\pi_{\Lambda}(\omega): \veczero \in \Lambda \Subset \Z^d, \omega \in \supp(\nu)\}.
\end{equation}

\begin{lemma}
\label{safeCmu}
Let $\pi$ be a n.n. interaction $\Phi$ for a set of restrictions $\mathcal{E}$, with $\pi$ and $\Omega(\mathcal{E})$ the corresponding specification and n.n. SFT. Then, if $\Omega(\mathcal{E})$ has a a safe symbol, we have that $\mathrm{c}_\pi(\nu) > 0$, for any shift-invariant measure $\nu$ such that $\supp(\nu) \subseteq \Omega(\mathcal{E})$.
\end{lemma}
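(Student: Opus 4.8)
The plan is to show that the quantity $\pi_{\Lambda}(\omega) = \pi_\Lambda^{\omega(\partial\Lambda)}(\theta(\veczero)=\omega(\veczero))$ is bounded below by a positive constant that is uniform over all $\veczero \in \Lambda \Subset \Z^d$ and all $\omega \in \supp(\nu) \subseteq \Omega(\mathcal{E})$. Since $\nu$ is shift-invariant and supported in $\Omega(\mathcal{E})$, for any $\omega \in \supp(\nu)$ the configuration $\omega$ is feasible, and in particular $\omega(\overline{\Lambda})$ is a feasible configuration for every finite $\Lambda$. The key observation is that $\pi_\Lambda^{\omega(\partial\Lambda)}$ is the Gibbs distribution on $\Symb^\Lambda$ with boundary condition $\xi = \omega(\partial\Lambda)$, and the probability it assigns to the single value $\omega(\veczero)$ at the origin can only depend on finitely many numbers.

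First I would reduce to a purely local estimate. Write $a$ for the safe symbol. For a fixed $\omega \in \supp(\nu)$ and fixed finite $\Lambda \ni \veczero$, I want to lower-bound $\pi_\Lambda^{\omega(\partial\Lambda)}(\theta(\veczero)=\omega(\veczero))$. Fix the boundary condition $\xi=\omega(\partial\Lambda)$. Among all feasible $\theta \in \Symb^\Lambda$ with $\theta\xi$ feasible, consider the map that sends $\theta$ to the configuration $\theta'$ which agrees with $\theta$ off $\{\veczero\}$ and has $\theta'(\veczero)=\omega(\veczero)$; this new configuration $\theta'\xi$ is again feasible, because $\omega(\overline{\Lambda})=\omega(\Lambda)\,\xi$ is feasible, so the value $\omega(\veczero)$ is compatible with whatever neighbours $\theta'$ places around the origin — wait, that is not automatic, since the neighbours of $\veczero$ in $\theta'$ need not match those in $\omega$. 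This is exactly the point where the safe symbol is needed: instead, given any feasible $\theta$, I replace the values of $\theta$ on $\{\veczero\}$ and on all neighbours of $\veczero$ inside $\Lambda$ — actually the cleanest route is to compare $\pi_\Lambda^{\xi}(\theta(\veczero)=\omega(\veczero))$ with the probability of a single judiciously chosen configuration. Concretely, let $\theta^*$ be the configuration on $\Lambda$ equal to the safe symbol $a$ everywhere except at $\veczero$, where $\theta^*(\veczero)=\omega(\veczero)$; by the safe symbol property, $\theta^*\xi$ is feasible regardless of $\xi$ and $\omega(\veczero)$. Hence $\pi_\Lambda^\xi(\theta(\veczero)=\omega(\veczero)) \geq \pi_\Lambda^\xi(\theta^*)$, and it remains to bound $\pi_\Lambda^\xi(\theta^*)$ from below uniformly.

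Next I would estimate $\pi_\Lambda^\xi(\theta^*) = \frac{\exp(-\EFunc_{\overline\Lambda}(\theta^*\xi))}{\mathrm{Z}^{\Phi,\xi}_\Lambda}$ from below. The numerator: $\EFunc_{\overline\Lambda}(\theta^*\xi)$ is a sum of site terms and bond terms, each of which is one of the finitely many interaction values $\Phi(b)$ or $\Phi(\{b,c\})$; letting $M := \max$ of $|\Phi|$ over its (finite) domain and $|\overline\Lambda| =: m$, the number of site terms is $m$ and the number of bond terms is at most $d\,m$, so $\EFunc_{\overline\Lambda}(\theta^*\xi) \leq (d+1)mM$, giving a lower bound $\exp(-(d+1)mM)$ on the numerator. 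The denominator: $\mathrm{Z}^{\Phi,\xi}_\Lambda = \sum_{\theta:\theta\xi\text{ feasible}} \exp(-\EFunc_{\overline\Lambda}(\theta\xi)) \leq |\Symb|^{|\Lambda|}\exp((d+1)mM) \leq |\Symb|^{m}\exp((d+1)mM)$. Therefore $\pi_\Lambda^\xi(\theta^*) \geq |\Symb|^{-m}\exp(-2(d+1)mM)$. The catch is that this bound still depends on $m = |\overline\Lambda|$, hence on $\Lambda$, and tends to $0$ as $\Lambda$ grows — so a crude global bound is not enough. The resolution, and the main obstacle to formalize cleanly, is that the MRF/locality structure of $\pi_\Lambda^\xi$ lets us localize the estimate to a fixed neighbourhood of $\veczero$: since $\pi_\Lambda^\xi$ is an MRF, conditioning on the configuration on $\partial\{\veczero\}$ (the $2d$ neighbours of the origin, all lying in $\Lambda$ provided $\Lambda$ is large enough, and if $\Lambda$ is small we absorb finitely many cases) makes $\theta(\veczero)$ depend only on that boundary, and the conditional probability $\pi_\Lambda^\xi(\theta(\veczero)=\omega(\veczero)\mid\theta(\partial\{\veczero\})=\eta)$ equals a ratio $\frac{\exp(-\EFunc_{\overline{\{\veczero\}}}(\omega(\veczero)\,\eta))}{\sum_{b}\exp(-\EFunc_{\overline{\{\veczero\}}}(b\,\eta))}$ over at most $|\Symb|$ feasible choices of $b$ — provided $\omega(\veczero)\eta$ is itself feasible, which holds because $\omega$ is feasible so $\eta$ could be $\omega(\partial\{\veczero\})$; but $\eta$ ranges over the possible boundary values and we only need the bound for $\eta$ of positive $\pi_\Lambda^\xi$-probability, for which the safe-symbol argument above shows at least $b=a$ gives a feasible extension. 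Thus each such conditional probability is at least $(\,|\Symb|\,e^{2(d+1)M}\,)^{-1}$, a constant depending only on $\Symb$, $d$, and $\max|\Phi|$, and averaging over $\eta$ preserves this lower bound.

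Finally I would assemble the pieces: set
\begin{equation}
c := \frac{1}{|\Symb|}\,e^{-2(d+1)M}, \qquad M := \max\{|\Phi(\vartheta)| : \vartheta \text{ in the domain of } \Phi\},
\end{equation}
and conclude that $\pi_\Lambda(\omega)\ge c>0$ for every $\veczero\in\Lambda\Subset\Z^d$ and every $\omega\in\supp(\nu)$ (handling directly the finitely many $\Lambda$ too small to contain $\partial\{\veczero\}$ by the crude bound, which is still positive for each fixed such $\Lambda$ and there are only finitely many shapes up to translation — actually, once $\Lambda\ni\veczero$ we always have enough room, so this caveat is vacuous). Taking the infimum over $\Lambda$ and $\omega$ gives $\mathrm{c}_\pi(\nu)\ge c>0$, which is the claim. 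The main obstacle, as indicated, is making rigorous the localization step — justifying that the relevant conditional probabilities only involve the $2d$ neighbours of the origin and the finitely many interaction values, and that the safe symbol guarantees the conditioning events one needs have positive probability — rather than the arithmetic, which is routine.
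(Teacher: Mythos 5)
Your overall strategy --- a direct uniform lower bound on $\pi_\Lambda^{\omega(\partial\Lambda)}(\theta(\veczero)=\omega(\veczero))$ using the safe symbol --- is reasonable, and it is genuinely different from the paper's route: the paper does not argue directly at all, but cites \cite[Proposition 2.17]{1-marcus} for the analogous bound $\mathrm{c}_\mu(\nu)>0$ and reduces to the inequality $\mathrm{c}_\pi(\nu)\ge \mathrm{c}_\mu(\nu)$. However, your localization step, which you yourself flag as the main obstacle, does not work as written. You condition on the configuration $\eta$ on $\partial\{\veczero\}$, bound the conditional probability $\pi_\Lambda^\xi(\theta(\veczero)=\omega(\veczero)\mid\eta)$ below by a constant, and then claim that ``averaging over $\eta$ preserves this lower bound.'' That lower bound holds only for those $\eta$ for which $\omega(\veczero)\eta$ is feasible; for other $\eta$ of positive $\pi_\Lambda^\xi$-probability the conditional probability is exactly $0$, and such $\eta$ genuinely occur. (Hard-core model, $\omega=\omega^{(o)}$ with $\omega(\veczero)=1$: the event that some neighbour of the origin carries a $1$ has positive probability because $0$ is safe, and on that event $\theta(\veczero)=1$ is forbidden.) Your appeal to the safe symbol at this point only shows that the \emph{denominator} of the conditional ratio is nonzero (some $b$ extends $\eta$), not that $b=\omega(\veczero)$ does; and the observation that $\eta$ ``could be'' $\omega(\partial\{\veczero\})$ covers one boundary configuration, not all of them. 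So all you actually get is your constant times $\pi_\Lambda^\xi(\{\eta:\omega(\veczero)\eta\mbox{ feasible}\})$, and you have no uniform lower bound on that probability.

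The repair uses exactly the ingredients you already assembled, but as an injection rather than a conditioning argument: map each $\theta$ with $\theta\xi$ feasible to $\theta'$ defined by $\theta'(\veczero)=\omega(\veczero)$, $\theta'(x)=a$ for $x\in\partial\{\veczero\}\cap\Lambda$, and $\theta'=\theta$ elsewhere. Then $\theta'\xi$ is feasible because $a$ is safe and any neighbour of $\veczero$ lying in $\partial\Lambda$ carries $\omega$'s own value, which is compatible with $\omega(\veczero)$ since $\omega$ is feasible; the map is at most $|\Symb|^{2d+1}$-to-one and changes $\EFunc_{\overline{\Lambda}}$ by at most a constant depending only on $d$ and your $M$. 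Summing weights over the fibers gives $\pi_\Lambda^\xi(\theta(\veczero)=\omega(\veczero))\ge|\Symb|^{-(2d+1)}e^{-C(d,M)}$ uniformly in $\Lambda\ni\veczero$ and $\omega\in\supp(\nu)$, which is the claim. With that substitution for your third paragraph, the proof is complete.
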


\begin{proof}
The proof is analogous and a particular case of \cite[Proposition 2.17]{1-marcus}. In that reference, under these assumptions, it is shown that $\mathrm{c}_\mu(\nu) := \inf\{\mu(\omega(\veczero) \vert \omega(\Lambda)): \Lambda \Subset \Z^d \setminus \{\veczero\}, \omega \in \supp(\nu)\} > 0$, for a given n.n. Gibbs measure $\mu$ for $\Phi$. We leave it to the reader to verify that $\mathrm{c}_\pi(\nu) \geq \mathrm{c}_\mu(\nu)$, for any such $\mu$.
\end{proof}

In fact, much weaker conditions than the existence of a safe symbol are sufficient for the result of Lemma \ref{safeCmu} and also for having the square block D-condition. See, for example, the \emph{single-site fillability} property \cite{1-marcus} and the \emph{topological strong spatial mixing property} \cite{1-briceno}. Notice that, since the Potts, Widom-Rowlinson and hard-core models have a safe symbol, we have that $c_\pi(\nu) > 0$, for any shift-invariant $\nu$ with $\supp(\nu) \subseteq \Omega(\mathcal{E})$.

\subsection{Pressure representation theorem}

Pressure representation results can be found in \cite[Theorems 3.1 and 3.6]{1-marcus}. Those results are not adequate for the application to the specific models we are considering in this paper. Instead we will use the following result, whose proof is adapted from the proof of \cite[Theorem 3.1]{1-marcus}, as well as an idea of \cite[Theorem 3.6]{1-marcus}. In contrast to the results of \cite{1-marcus}, our result makes assumptions on the specification rather than a Gibbs measure.

\begin{theorem}
\label{press-rep}
Let $\Phi$ be a n.n. interaction for a set of restrictions $\mathcal{E}$ and suppose that $\Omega(\mathcal{E})$ satisfies the square block D-condition. Let $\nu$ be a shift-invariant measure such that $\supp(\nu) \subseteq \Omega(\mathcal{E})$ and $\mathrm{c}_\pi(\nu) > 0$. In addition, suppose that:
\begin{equation}
\label{star}
\lim_{y,z \rightarrow \infty} \pi_{y,z}(\omega) = \hat{\pi}(\omega) \mbox{ uniformly over }~\omega \in\supp(\nu).
\end{equation}

Then:
\begin{equation}
\label{PresRepEqn}
\Press(\Phi) = \int{\left(\hat{I}_{\pi} + A_\Phi\right)}d\nu.
\end{equation}
\end{theorem}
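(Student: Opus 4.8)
The plan is to follow the strategy of \cite[Theorem 3.1]{1-marcus}, but reorganized so that all estimates are on the specification $\pi$ rather than on a Gibbs measure. The starting point is Theorem \ref{friedland}, which lets us work with $\hat{\mathrm{Z}}^\Phi_{\block_n}$ (globally admissible configurations only). The key idea is a telescoping identity: enumerate the sites of $\block_n$ in lexicographic order as $x_1 \prec x_2 \prec \cdots \prec x_{|\block_n|}$, and for a globally admissible $\theta \in \Symb^{\block_n}$ write $\mathrm{exp}(-\EFunc_{\block_n}(\theta))$, suitably normalized, as a product of conditional factors of the form $\pi^{(f)}_{\block_n}(\theta(x_j) \mid \theta(x_1 \cdots x_{j-1}))$. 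Summing over $\theta$ and taking logs, one gets $\frac{1}{|\block_n|}\log \hat{\mathrm{Z}}^\Phi_{\block_n}$ expressed (up to a boundary-energy term that contributes $\int A_\Phi \, d\nu$ in the limit after a standard shift-invariance/averaging argument) as an average of terms $-\log \pi^{(f)}_{\block_n}(\theta(x_j)\mid \theta_{\prec x_j})$. The bulk of these terms, for $x_j$ deep inside $\block_n$, look like $\pi$-probabilities of the symbol at a site conditioned on a large lexicographic past, i.e. essentially $\pi_{y,z}(\sigma_{x_j}\omega)$ for large $y,z$; the sites near $\partial\block_n$ form a vanishing fraction (this uses $\frac{r_n}{n}\to 0$ from the square block D-condition only indirectly — really one just needs that the shell $\block_n\setminus\block_{n-m}$ is $o(|\block_n|)$).

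The next step is to replace the sum over globally admissible $\theta\in\Symb^{\block_n}$ by an integral with respect to $\nu$. This is where the square block D-condition does its work: it guarantees that a globally admissible configuration on $\block_n$ can be glued to a globally admissible configuration far away, which is what lets one compare the partition-function average to an ergodic-type average against $\nu$ and see that the limit is independent of which measure we integrate against, provided $\supp(\nu)\subseteq\Omega(\mathcal{E})$. Concretely, I would show that $\frac{1}{|\block_n|}\log\hat{\mathrm{Z}}^\Phi_{\block_n}$ converges and that its limit equals $\int(\hat I_\pi + A_\Phi)\,d\nu$ by sandwiching: the uniform convergence hypothesis (\ref{star}) lets us replace, for all but an $o(|\block_n|)$ fraction of sites and uniformly over the relevant configurations, the finite-volume conditional $\pi_{y,z}$ by its limit $\hat\pi$ within error $\varepsilon$; the bound $\mathrm{c}_\pi(\nu)>0$ ensures all the logarithms involved are bounded (so the exceptional sites and the $\varepsilon$-errors really are negligible, and dominated convergence applies to pass $\int\hat I_\pi\,d\nu$ through). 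The term $A_\Phi$ appears because the energy $\EFunc_{\block_n}(\theta)$ differs from $\sum_j \Phi$-contributions organized by "new bonds added when site $x_j$ is revealed", and this per-site contribution is exactly $-A_\Phi(\sigma_{x_j}\omega)$ up to boundary terms; shift-invariance of $\nu$ turns the spatial average into $\int A_\Phi\,d\nu$.

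I expect the main obstacle to be the interchange of limits and the handling of boundary/edge effects: one must show that the finite-volume conditional probabilities $\pi^{(f)}_{\block_n}(\theta(x_j)\mid\theta_{\prec x_j})$ — which condition on the lexicographic past \emph{intersected with} $\block_n$, a truncated and slightly irregular region — are uniformly close to $\pi_{y,z}(\omega)$ for the appropriate $y,z\to\infty$, and that this closeness is uniform enough (over sites $x_j$, over $n$, and over configurations in the support) to survive averaging. This is precisely the point where hypothesis (\ref{star}) together with $\mathrm{c}_\pi(\nu)>0$ must be combined carefully: without the uniform lower bound $\mathrm{c}_\pi(\nu)$ the logarithms could blow up on the exceptional (boundary-adjacent) sites, and without uniformity of the convergence in (\ref{star}) one could not control the $\varepsilon$-error simultaneously across the $\Theta(|\block_n|)$ interior sites. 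The remaining steps — the telescoping identity, identifying the $A_\Phi$ term, and applying Theorem \ref{friedland} to pass between $\mathrm{Z}^\Phi_{\block_n}$ and $\hat{\mathrm{Z}}^\Phi_{\block_n}$ — are routine bookkeeping.
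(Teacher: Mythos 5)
Your overall strategy is the same as the paper's: telescope a suitable finite-volume probability of $\omega(\block_n)$ into lexicographic conditional factors, identify the energy part with $\int A_\Phi\,d\nu$ by shift-invariance, and use (\ref{star}) together with $\mathrm{c}_\pi(\nu)>0$ to pass the conditional factors to $\hat{I}_\pi$. But there is a genuine gap at the central step, and it is exactly the point you flag as ``the main obstacle'' without resolving it. You telescope the \emph{free-boundary} measure $\pi^{(f)}_{\block_n}$, so your conditional factor at a site $x_j$ conditions only on the lexicographic past inside $\block_n$; by the MRF property this is a probability on the future set (a translate of some $S_{y,z}$) with boundary condition given by $\omega$ on the bottom boundary $\bBdry S_{y,z}$ only, and \emph{free} (i.e.\ averaged) boundary on $\tBdry S_{y,z}$ and on $\partial\block_n$. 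The quantity $\pi_{y,z}(\sigma_{x_j}\omega)$ appearing in hypothesis (\ref{star}) conditions on $\omega$ on the \emph{entire} boundary $\partial S_{y,z}$, top included. Hypothesis (\ref{star}) says nothing about boundary conditions other than restrictions of points of $\supp(\nu)$, so it cannot be used to compare the two; and in the non-uniqueness region --- the whole point of this theorem --- conditional probabilities with free versus $\omega$-boundary conditions on a macroscopic piece of the boundary genuinely need not agree (they can select different phases). Neither $\mathrm{c}_\pi(\nu)>0$ nor the square block D-condition closes this gap.

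The paper's proof is engineered precisely to avoid this. It works with $\pi^{\omega}_{\Lambda_n}$ on the enlarged box $\Lambda_n=\block_{n+r_n}$ with boundary condition $\omega(\partial\Lambda_n)$ --- this is where the square block D-condition is actually used, to show that $\frac{1}{|\block_n|}\bigl(\log\PFunc+\log\pi^{\omega}_{\Lambda_n}(\omega(\block_n))+\EFunc_{\block_n}(\omega(\block_n))\bigr)\to 0$ uniformly, via the comparison of $\PFunc$ with $\hat{\mathrm{Z}}^\Phi_{\block_n}$ from Theorem \ref{friedland} --- and then it first peels off the factor $\pi^{\omega}_{\Lambda_n}(\omega(\underline{\partial}\block_n))$, an $o(|\block_n|)$-size boundary term controlled by $\mathrm{c}_\pi(\nu)$, before telescoping over the interior sites. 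Because the whole inner boundary of $\block_n$ has already been revealed, each subsequent factor conditions on $\omega$ on the full boundary of the relevant lexicographic future set and is therefore \emph{exactly} $\pi_{y(x),z(x)}(\sigma_x\omega)$, to which (\ref{star}) applies directly. If you want to keep your free-boundary telescoping you would need an additional hypothesis or argument comparing free and $\omega$-boundary conditionals, which is not available from the stated assumptions.
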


\begin{proof}
Choose $\ell < 0$ and $L > 0$ to be lower and upper bounds respectively on values of $\Phi$. Given $n \in \N$, let $r_n$ be as in the definition of the square block D-condition and consider the sets $\block_n$ and $\Lambda_n := \block_{n + r_n}$. We begin by proving that:
\begin{equation}
\label{unifconverg}
\frac{1}{|\block_n|} (\log \PFunc + \log \pi^{\omega}_{\Lambda_n}(\omega(\block_n)) + \EFunc_{\block_n}(\omega(\block_n))) \rightarrow 0,
\end{equation}
uniformly in $\omega \in \Omega$. For this, we will only use the square block D-condition. We fix $n \in \N$, $\omega \in \supp(\nu)$ and let $m_n := |\Lambda_n| - |\block_n|$. Let $C_d \geq 1$ be a constant such that for any $\Delta \Subset \Z^d$, the total number of sites and bonds contained in $\overline{\Delta}$ is bounded from above by $C_d |\Delta|$. 

\begin{align}
\pi^{\omega}_{\Lambda_n}(\omega(\block_n))	&	\geq	\pi^{\omega}_{\Lambda_n}(\omega(\Lambda_n))	\\
									&	=	\frac{\exp({-\EFunc_{\overline{\Lambda_n}}(\omega(\overline{\Lambda_n}))})}{\sum_{\theta: \theta \omega(\partial \Lambda_n) \mathrm{~feasible}} \exp({-E_{\overline{\Lambda_n}}^\Phi(\theta \omega(\partial \Lambda_n))})}	\\
									&	\geq	\frac{\exp({-\EFunc_{\block_n}(\omega(\block_n)) - C_dm_nL})}{\sum_{\tau \in \Symb^{\block_n}: \tau \mathrm{~feasible}} \exp({-\EFunc_{\block_n}(\tau)}) |\Symb|^{C_dm_n}\exp({-C_dm_n\ell})}	\\
									&	=	\frac{\exp({-\EFunc_{\block_n}(\omega(\block_n))})}{\PFunc}\exp({m_n(C_d\ell - C_dL-C_d\log|\Symb|)}).
\end{align}

Now, if $\tau_{\max}$ achieves the maximum of $\pi^{\omega}_{\Lambda_n}(\omega(\block_n)\tau)$ over $\tau \in \Symb^{\Lambda_n \setminus \block_n}$, then:
\begin{align}
\pi^{\omega}_{\Lambda_n}(\omega(\block_n))	&	=	\sum_{\tau \in \Symb^{\Lambda_n \setminus \block_n}: \omega(\block_n)\tau \mathrm{~feasible}} \pi^{\omega}_{\Lambda_n}(\omega(\block_n)\tau)		\\
									&	\leq	|\Symb|^{m_n} \pi^{\omega}_{\Lambda_n}(\omega(\block_n)\tau_{\max})	\\		
									&	=	|\Symb|^{m_n} \frac{\exp({-\EFunc_{\overline{\Lambda_n}}(\omega(\block_n)\tau_{\max}\omega(\partial\Lambda_n))})}{\sum_{\theta: \theta \omega(\partial \Lambda_n) \mathrm{~feasible}} \exp({-\EFunc_{\overline{\Lambda_n}}(\theta \omega(\partial \Lambda_n))})}	\\
									&	\leq	|\Symb|^{m_n}\frac{\exp({-\EFunc_{\block_n}(\omega(\block_n)) - C_dm_n\ell})}{\sum_{\tau \in \Symb^{\block_n}: [\tau]^\Omega \neq \emptyset} e^{-\EFunc_{\block_n}(\tau)} \exp({-C_dm_nL})} \label{DcondIneq}	\\
									&	\leq	\frac{\exp({-\EFunc_{\block_n}(\omega({\block_n}))})}{\hat{\mathrm{Z}}^\Phi_{\block_n}}\exp({-m_n(C_d\ell - C_dL-C_d\log|\Symb|)}),
\end{align}
where the square block D-condition has been used in (\ref{DcondIneq}). Therefore,
\begin{equation}
\alpha^{-m_n} \leq \pi^{\omega}_{\Lambda_n}(\omega(\block_n)) \PFunc \exp({\EFunc_{\block_n}(\omega(\block_n))}) \leq \frac{\mathrm{Z}^\Phi_{\block_n}}{\hat{\mathrm{Z}}^\Phi_{\block_n}} \alpha^{m_n},
\end{equation}
where $\alpha := e^{-(C_d\ell - C_dL-C_d\log|\Symb|)}$. Since $\frac{m_n}{|\block_n|} \rightarrow 0$ and $\frac{1}{|\block_n|}\left(\log\mathrm{Z}^\Phi_{\block_n} - \log\hat{\mathrm{Z}}^\Phi_{\block_n}\right) \rightarrow 0$ (thanks to Theorem \ref{friedland}), we have obtained (\ref{unifconverg}).

We use (\ref{unifconverg}) to represent pressure:
\begin{align}
\Press(\Phi)	&	= \lim_{n \rightarrow \infty} \frac{\log \PFunc}{|\block_n|} = \lim_{n \rightarrow \infty} \int{\frac{\log \PFunc}{|{\block_n}|}}d\nu	\\
			&	= \lim_{n \rightarrow \infty} \int{\frac{-\log \pi^{\omega}_{\Lambda_n}(\omega({\block_n})) - \EFunc_{\block_n}(\omega({\block_n}))}{|{\block_n}|}}d\nu.
\end{align}

(Here the second equality comes from the fact that $\frac{\log \PFunc}{|{\block_n}|}$ is independent of $\omega$, and the third from (\ref{unifconverg}).) Since $\nu$ is shift-invariant, it can be checked that:
\begin{equation}
\lim_{n \rightarrow \infty} \int{\frac{- \EFunc_{\block_n}(\omega({\block_n}))}{|{\block_n}|}}d\nu = \int{A_\Phi}d\nu,
\end{equation}
and so we can write:
\begin{equation}
\Press(\Phi) = \int{A_\Phi}d\nu - \lim_{n \rightarrow \infty} \int \frac{\log \pi^{\omega}_{\Lambda_n}(\omega({\block_n}))}{|{\block_n}|}d\nu.
\end{equation}

It remains to show that:
\begin{equation}
\lim_{n \rightarrow \infty} \int \frac{-\log \pi^{\omega}_{\Lambda_n}(\omega({\block_{n}}))}{|{\block_{n}}|}d\nu= \int{\hat{I}_\pi}d\nu.
\end{equation}

Fix $\omega \in \supp(\nu)$ and denote $c := \mathrm{c}_\pi(\nu)$. We will decompose $\pi^{\omega}_{\Lambda_n}(\omega({\block_{n}}))$ as a product of conditional probabilities. By (\ref{star}), for any $\varepsilon > 0$, there exists $k := k_{\varepsilon}$ so that for $y,z \geq \vecone k$, $|\pi_{y,z}(\omega) - \hat{\pi}(\omega)| < \varepsilon$ for all $\omega \in \supp(\nu)$. For $x \in \block_{n-1}$, we denote $\block_n^{-}(x) := \left\{y \in \block_{n-1}: y \prec x\right\}$. Then, we can decompose $\pi^{\omega}_{\Lambda_n}(\omega({\block_{n}}))$ as:
\begin{align}
\label{bigdecomp}
\pi^{\omega}_{\Lambda_n}(\omega({\block_{n}}))	&	=	\pi^{\omega}_{\Lambda_n}\left(\omega(\underline{\partial} \block_{n})\right)\prod_{x \in \block_{n-1}} \pi^{\omega}_{\Lambda_n}\left(\omega(x) \middle\vert \omega\left(\block_n^{-}(x) \cup \underline{\partial} \block_{n}\right)\right)\\
										&	=	\pi^{\omega}_{\Lambda_n}\left(\omega(\underline{\partial} \block_{n})\right)\prod_{x \in \block_{n-1}} \pi_{y(x), z(x)}(\sigma_x(\omega)),
\end{align}
where $y(x) := \vecone n + x$ and $z(x) := \vecone n - x$, thanks to the MRF property and stationarity of the specification.

Let's denote $R_{n,k} := \block_{n} \setminus \block_{n-k}$. Then, $\block_{n} = \underline{\partial} \block_{n} \sqcup \block_{n-k-1} \sqcup R_{n-1,k}$ and we have:
\begin{align}
c^{|\underline{\partial} \block_{n}| + |R_{n-1,k}|}\prod_{x \in \block_{n-k-1}} \pi_{y(x), z(x)}(\sigma_x(\omega))		&	\leq	\pi^{\omega}_{\Lambda_n}(\omega({\block_{n}}))	\\
																					&	\leq   \prod_{x \in \block_{n-k-1}} \pi_{y(x), z(x)}(\sigma_x(\omega)).
\end{align}

Taking $-\log(\cdot)$, we have that:
\begin{align}
0	&	\leq -\log \pi^{\omega}_{\Lambda_n}(\omega({\block_{n}})) - \sum_{x \in \block_{n-k-1}} -\log \pi_{y(x), z(x)}(\sigma_x(\omega))	\\
	&	\leq (|\underline{\partial} \block_{n}| + |R_{n-1,k}|)\log \left(c^{-1}\right).
\end{align}

So, by the choice of $k$, for $x \in \block_{n-k-1}$,
\begin{align}
\left|\pi_{y(x), z(x)}(\sigma_x(\omega)) - \hat{\pi}(\sigma_x(\omega))\right| < \varepsilon,
\end{align}
and since $\pi_{y(x), z(x)}(\sigma_x(\omega)), \hat{\pi}(\sigma_x(\omega)) \geq c > 0$, by the Mean Value Theorem:
\begin{align}
\left|-\log \pi_{y(x), z(x)}(\sigma_x(\omega)) - \hat{I}_\pi(\sigma_x(\omega))\right| < \varepsilon c^{-1},
\end{align}

It follows from (\ref{star}) that $\hat{\pi}$ is the uniform limit of continuous functions on $\supp(\nu)$. In addition, $\hat{\pi}(\omega) \geq c > 0$, for all $\omega \in \supp(\nu)$. Therefore, we can integrate with respect to $\nu$ to see that:
\begin{equation}
\label{closesites}
\left|\int{-\log \pi_{y(x), z(x)}(\sigma_x(\omega))}d\nu - \int \hat{I}_{\pi}(\omega) d\nu\right| < \varepsilon c^{-1}.
\end{equation}

We now combine the previous equations to see that:
\begin{align}
\left|\int -\log \pi^{\omega}_{\Lambda_n}(\omega({\block_{n+1}})) d\nu - \int \hat{I}_{\pi}(\omega) d\nu |\block_{n-k-1}| \right|	\\
\leq |\block_{n-k-1}| \varepsilon c^{-1} + (|\underline{\partial} \block_{n}| + |R_{n-1,k}|)\log \left(c^{-1}\right).
\end{align}

Notice that, for a fixed $k$, $\lim_{n \rightarrow \infty}\frac{|\underline{\partial} \block_{n}| + |R_{n-1,k}|}{|\block_{n}|} = 0$ and $\lim_{n \rightarrow \infty}\frac{|B_{n-k-1}|}{|\block_{n}|} = 1$. Therefore,
\begin{align}
-\varepsilon c^{-1} + \int \hat{I}_{\pi}(\omega) d\nu	&	\leq    \liminf_{n \rightarrow \infty} \int \frac{-\log \pi^{\omega}_{\Lambda_n}(\omega({\block_{n}}))}{|\block_{n}|} d\nu  \\
                                							&	\leq    \limsup_{n \rightarrow \infty} \int \frac{-\log \pi^{\omega}_{\Lambda_n}(\omega({\block_{n}}))}{|\block_{n}|} d\nu	\\
										&	\leq    \int \hat{I}_{\pi}(\omega) d\nu + \varepsilon c^{-1}.
\end{align}

By letting $\varepsilon \rightarrow 0$, we see that:
\begin{equation}
\lim_{n \rightarrow \infty} \int \frac{-\log \pi^{\omega}_{\Lambda_n}(\omega({\block_{n}}))}{|\block_{n}|} d\nu = \int{\hat{I}_{\pi}(\omega)}d\nu,
\end{equation}
completing the proof.
\end{proof}

%%%%%%%%%%%%%%%%%%%%%%%%%%%%%%%%%%%%%%%%%%%%%%%%%%

\section{Spatial mixing and stochastic dominance}
\label{section7}

From now on, when talking about specifications for the Potts, Widom-Rowlinson and hard-core lattice models, we will distinguish them by the subindex corresponding to the parameter $\beta$, $\lambda$ or $\gamma$ of the model, i.e. $\pi_{\beta,\Lambda}^\xi$ should be understood as a probability measure in the Potts model, $\pi_{\lambda,\Lambda}^\xi$ in the Widom-Rowlinson and $\pi_{\gamma,\Lambda}^\xi$ in the hard-core lattice model, and $\pi_\beta$, $\pi_\lambda$ and $\pi_\gamma$ will denote the corresponding specifications. Also, we will write $\pi^\beta_\Lambda$, $\hat{\pi}^\beta$ and $\hat{I}^\beta_\pi$ for the functions  $\pi_\Lambda$, $\hat{\pi}$ and $\hat{I}_\pi$ in the Potts model, and  short-hand notations when
$\Lambda = S_n$ or $S_{y,z}$.  For example,
$$
\pi^\beta_n(\omega) := \pi^\beta_{S_n}(\omega) := \pi^{\omega(\partial S_n)}_{\beta, S_n}(\theta(\zero) = \omega(
\zero))
$$
The analogous notation will be used for the Widom-Rowlinson and hard-core cases, but using the parameters $\lambda$ and $\gamma$, respectively.

\subsection{Spatial mixing properties}

We now introduce concepts of spatial mixing that we will need in this paper. Let $f:\N \rightarrow \R_{\geq 0}$ be a function such that $f(n) \searrow 0$ as $n \to \infty$.

\begin{definition}
\label{SSMspec}
Given $\Lambda \subseteq \Z^d$, we say that a $\Lambda$-MRF $\mu$ satisfies \emph{strong spatial mixing (SSM) with rate $f(n)$ for a class of finite sets $\mathcal{C}$} if for any $\Delta \in \mathcal{C}$ such that $\overline{\Delta} \subseteq \Lambda$, any $\Theta \subseteq \Delta$, $\theta \in \Symb^\Theta$ and $\xi,\eta \in \Symb^{\partial \Delta}$ with $\mu(\xi)\mu(\eta) > 0$,
\begin{equation}
\left| \mu(\theta \vert \xi) - \mu(\theta \vert \eta) \right| \leq |\Theta|f\left(\dist(\Theta,\Sigma_{\partial \Delta}(\xi,\eta))\right).
\end{equation}

We say that a Gibbs specification $\pi = \{\pi_\Lambda^\xi\}_{\Lambda,\xi}$ satisfies SSM with rate $f(n)$ for a class of finite sets $\mathcal{C}$ if each element $\pi_\Lambda^\xi$ satisfies SSM with rate $f(n)$ for the class $\mathcal{C}$ restricted to subsets of $\Lambda$.

If there exists $C,\alpha > 0$ such that $f$ can be chosen to be $f(n) = Ce^{-\alpha n}$, we say that \emph{exponential SSM} holds.
\end{definition}

\begin{definition}(\cite[p. 445]{1-alexander})
A $\Z^d$-MRF $\mu$ satisfies the \emph{ratio strong mixing property for a class of finite sets $\mathcal{C}$} if there exists $C,\alpha > 0$ such that for any $\Delta \in \mathcal{C}$, any $\Theta, \Sigma \subseteq \Delta$ and $\xi \in \Symb^{\partial \Delta}$ with $\mu(\xi) > 0$,
\begin{align}
\sup\left\{\left|\frac{\mu(A \cap B \vert \xi)}{\mu(A \vert \xi)\mu(B \vert \xi)} - 1\right|: A \in \mathcal{F}_\Theta, B \in \mathcal{F}_\Sigma, \mu(A \vert \xi)\mu(B \vert \xi) > 0\right\} \nonumber\\
\leq C \sum_{x \in \Theta, y \in \Sigma} e^{-\alpha \dist(x,y)}.
\end{align}
\end{definition}

\begin{proposition}
\label{SSMequiv}
Let $\mu$ be a $\Z^2$-MRF with $\supp(\mu) = \Symb^{\Z^2}$ that satisfies the ratio strong mixing property for the class of finite simply lattice-connected sets. Then, $\mu$ satisfies exponential SSM for the family of sets $\{S_{y,z}\}_{y,z \geq 0}$.
\end{proposition}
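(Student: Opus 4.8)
The plan is to deduce the SSM estimate directly from the ratio strong mixing hypothesis by a single-site interpolation argument: I change the boundary condition on $\partial S_{y,z}$ one site at a time, and at each step I reinterpret ``changing the spin at a boundary site $x$'' as ``conditioning, or not, on the spin at $x$ regarded as an interior site of the enlarged region $S_{y,z}\cup\{x\}$'', a region to which ratio strong mixing applies because it is still simply lattice-connected.

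First I would fix $\Delta=S_{y,z}$, a set $\Theta\subseteq\Delta$, a configuration $\theta\in\Symb^\Theta$, and $\xi,\eta\in\Symb^{\partial\Delta}$, and set $D:=\Sigma_{\partial\Delta}(\xi,\eta)=\{x_1,\dots,x_m\}$ and $d:=\dist(\Theta,D)$. Since $\supp(\mu)=\Symb^{\Z^2}$, every finite configuration has positive measure and all the conditional probabilities below are defined. Next I interpolate: let $\xi^{(j)}$ be the boundary configuration equal to $\eta$ on $x_1,\dots,x_j$ and to $\xi$ elsewhere, so $\xi^{(0)}=\xi$, $\xi^{(m)}=\eta$, and $\xi^{(j-1)},\xi^{(j)}$ differ only at $x_j$; by the triangle inequality it suffices to bound each $|\mu(\theta\mid\xi^{(j-1)})-\mu(\theta\mid\xi^{(j)})|$. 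For the $j$-th term I pass to $\Delta_j:=\Delta\cup\{x_j\}$: using that $\mu$ is an MRF with full support, I fix any boundary configuration $\zeta_j$ on $\partial\Delta_j$ that extends the common value of $\xi^{(j-1)}$ and $\xi^{(j)}$ on $\partial\Delta\setminus\{x_j\}$, and check, by the Markov property, that $\mu(\theta\mid\xi^{(j-1)})=\nu_j(\theta\mid\omega(x_j)=\xi(x_j))$ and $\mu(\theta\mid\xi^{(j)})=\nu_j(\theta\mid\omega(x_j)=\eta(x_j))$, where $\nu_j:=\mu(\,\cdot\mid\omega(\partial\Delta_j)=\zeta_j)$. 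Then I apply ratio strong mixing for $\mu$ on the simply lattice-connected set $\Delta_j$, with $A=[\theta]\in\mathcal{F}_\Theta$, $B$ a one-site cylinder in $\mathcal{F}_{\{x_j\}}$, and boundary condition $\zeta_j$ (note $\Theta\cup\{x_j\}\subseteq\Delta_j$); it readily implies $|\nu_j(A\mid B)-\nu_j(A)|\le C\sum_{x\in\Theta}e^{-\alpha\dist(x,x_j)}$, and hence, via $B=[\omega(x_j)=\xi(x_j)]$ and $B=[\omega(x_j)=\eta(x_j)]$ and one more triangle inequality, $|\mu(\theta\mid\xi^{(j-1)})-\mu(\theta\mid\xi^{(j)})|\le 2C\sum_{x\in\Theta}e^{-\alpha\dist(x,x_j)}$.

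Summing over $j$ and swapping the order of summation, $|\mu(\theta\mid\xi)-\mu(\theta\mid\eta)|\le 2C\sum_{x\in\Theta}\sum_{v\in D}e^{-\alpha\dist(x,v)}$. Since each $v\in D$ is at distance $\ge d$ from $\Theta$ and $\Z^2$ has at most $4k$ sites at $\ell^1$-distance $k$ from a given site, the inner sum is $\le\sum_{k\ge d}4ke^{-\alpha k}\le C_\alpha e^{-(\alpha/2)d}$ with $C_\alpha$ depending only on $\alpha$. This yields $|\mu(\theta\mid\xi)-\mu(\theta\mid\eta)|\le|\Theta|\,f(d)$ with $f(n):=2CC_\alpha e^{-(\alpha/2)n}$, which is of exponential form; as $\Delta=S_{y,z}$ was arbitrary, this is exponential SSM for $\{S_{y,z}\}_{y,z\ge0}$.

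The step I expect to be the main obstacle is the geometric claim that $S_{y,z}\cup\{x\}$ is again finite and simply lattice-connected for every $x\in\partial S_{y,z}$, which is what legitimizes invoking ratio strong mixing on the enlarged region: it reduces to checking that deleting a single site of the inner boundary of the complement of the ``L-shaped'' set $S_{y,z}$ does not disconnect that complement, a routine but somewhat case-ridden planar argument. A secondary delicate point is the Markov-property bookkeeping that identifies $\mu(\theta\mid\xi^{(j-1)})$ with a one-site conditional of $\nu_j$ inside $\Delta_j$, and checking that the final estimate is uniform in $y,z$ — which it is, since $C,\alpha$ are the ratio-mixing constants of $\mu$ and $C_\alpha$ depends only on $\alpha$.
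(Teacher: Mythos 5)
Your proposal is correct and follows essentially the same route as the paper: both absorb disagreeing boundary sites of $S_{y,z}$ into the region (legitimate because the enlarged set is still finite and simply lattice-connected and $\mu$ is fully supported, so the outer boundary condition $\zeta_j$ resp.\ $\tilde\xi$ has positive measure) and then apply the ratio strong mixing bound there with $A=[\theta]$ and $B$ a cylinder on the absorbed sites. The only difference is organizational: the paper enlarges by the whole disagreement set $\Sigma$ at once and then invokes a cited reduction to the case $|\Sigma|=1$, whereas you carry out that single-site telescoping explicitly and sum the resulting series over the disagreement set yourself, at the harmless cost of halving the exponential rate.
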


\begin{proof}
Fix $y,z \geq 0$ and the corresponding set $S_{y,z} \Subset \Z^2$. Let $\Theta \subseteq S_{y,z}$, $\theta \in \Symb^\Theta$ and $\xi_1,\xi_2 \in \Symb^{\partial S_{y,z}}$ with $\mu(\xi_1)\mu(\xi_2) > 0$, consider:
\begin{enumerate}
\item the sets $\Sigma := \Sigma_{\partial S_{y,z}}(\xi_1,\xi_2)$ and $\Delta := S_{y,z} \cup \Sigma$,
\item an arbitrary configuration $\tilde{\xi} \in \Symb^{\partial \Delta}$ such that $\tilde{\xi}(\partial S_{y,z} \setminus \Sigma) = \xi_i(\partial S_{y,z} \setminus \Sigma)$ ($i = 1,2$), and
\item the events $A := [\theta] \in \mathcal{F}_\Theta$ and $B_i := [\xi_i(\Sigma)] \in \mathcal{F}_\Sigma$, for $i = 1,2$.
\end{enumerate}

Notice that $\Delta$ is a finite simply lattice-connected set and, since $\supp(\mu) = \Symb^{\Z^d}$, we can be sure that $\mu(\tilde{\xi}) > 0$. Then:
\begin{align}
\left| \mu(\theta \vert \xi_1) - \mu(\theta \vert \xi_2) \right|	&	=	\left| \mu(A \vert [\tilde{\xi}] \cap B_1) - \mu(A \vert [\tilde{\xi}] \cap B_2) \right|	\\
											&	=	\left| \frac{\mu(A \cap B_1 \vert \tilde{\xi})}{\mu(B_1 \vert \tilde{\xi})} - \mu(A \vert \tilde{\xi}) + \mu(A \vert \tilde{\xi}) - \frac{\mu(A \cap B_2 \vert \tilde{\xi})}{\mu(B_2 \vert \tilde{\xi})} \right|		\\
											&	\leq	\left| \frac{\mu(A \cap B_1 \vert \tilde{\xi})}{\mu(B_1 \vert \tilde{\xi})\mu(A \vert \tilde{\xi})} - 1 \right| + \left| 1 - \frac{\mu(A \cap B_2 \vert \tilde{\xi})}{\mu(B_2 \vert \tilde{\xi})\mu(A \vert \tilde{\xi})} \right|		\\
											&	\leq	2C \sum_{x \in \Theta, y \in \Sigma} e^{-\alpha \dist(x,y)}	\\
											&	\leq	|\Theta| 2C \sum_{y \in \Sigma} e^{-\alpha \dist(\Delta,y)}.
\end{align}

W.l.o.g., we can assume that $|\Sigma| = 1$ (see \cite[Corollary 2]{1-briceno}). Therefore, by taking $C' = 2C$, we have:
\begin{equation}
\left| \mu(\theta \vert \xi_1) - \mu(\theta \vert \xi_2) \right| \leq |\Theta| 2K \sum_{y \in \Sigma} e^{-\alpha \dist(\Delta,y)} = |\Theta| C'e^{-\alpha \dist(\Theta,\Sigma)}.
\end{equation}
\end{proof}

\begin{remark}
The proof of Proposition \ref{SSMequiv} seems to require some assumption on the support of $\mu$ (for the existence of $\tilde{\xi}$ in the enumerated item list above). Fully supported (i.e. $\supp(\mu) = \Symb^{\Z^2}$) suffices, and is the only case in which we will apply this result (see Corollary \ref{potts-ratio}), but the conclusion probably holds under weaker assumptions.
\end{remark}

Given $y,z \geq 0$, we define the \emph{bottom boundary of $S_{y,z}$} as $\bBdry S_{y,z} := \partial S_{y,z} \cap \past$, i.e. the portion of the boundary of $S_{y,z}$ included in the past, and the \emph{top boundary of $S_{y,z}$} as the complement $\tBdry S_{y,z} := \partial S_{y,z} \setminus \past$. Clearly, $\partial S_{y,z} = \bBdry S_{y,z} \sqcup \tBdry S_{y,z}$.

\begin{proposition}
\label{SSM_subcritical}
Let $\pi$ be a specification satisfying exponential SSM with parameters $C,\alpha > 0$. Then, for all $n \in \N$, $y,z \geq \vecone n$ and $a \in \Symb$:
\begin{equation}
\left| \pi_{S_n}^{\omega_1}(\theta(\veczero) = a) - \pi_{S_{y,z}}^{\omega_2}(\theta(\veczero) = a)\right| \leq Ce^{-{\alpha}n},
\end{equation}
uniformly over $\omega_1, \omega_2 \in \Omega(\mathcal{E})$ such that $\omega_1(\past) = \omega_2 (\past)$.
\end{proposition}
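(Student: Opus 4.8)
The plan is to express $\pi^{\omega_2}_{S_{y,z}}(\theta(\veczero)=a)$ as a convex combination of quantities of the form $\pi^{\xi}_{S_n}(\theta(\veczero)=a)$, and then to compare each such term with $\pi^{\omega_1}_{S_n}(\theta(\veczero)=a)$ by a \emph{single} application of exponential SSM on the shape $S_n$. The point is that once everything is reduced to boundary conditions on $\partial S_n$, the disagreement set will be confined to the ``top'' boundary, which is far from the origin.

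\emph{Geometric preliminaries.} Since $y,z\geq\vecone n$ we have $S_n\subseteq S_{y,z}$, and every site of $\partial S_n$ is adjacent to $S_n\subseteq S_{y,z}$, hence lies in $\overline{S_{y,z}}$; therefore $\partial S_n=(\partial S_n\cap S_{y,z})\sqcup(\partial S_n\cap\partial S_{y,z})$. As $S_{y,z}\subseteq\Z^2\setminus\past$ while $\bBdry S_n=\partial S_n\cap\past$, it follows that $\partial S_n\cap S_{y,z}\subseteq\tBdry S_n$ and $\bBdry S_n\subseteq\partial S_n\cap\partial S_{y,z}$. Finally, writing $S_n=\{x\succcurlyeq\veczero:-\vecone n\leq x\leq\vecone n\}$, a short check shows that every site of $\tBdry S_n$ has some coordinate of absolute value $n+1$, so $\dist(\{\veczero\},\tBdry S_n)\geq n+1$.

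\emph{Decomposition via the MRF property.} Fix a feasible $\zeta\in\Symb^{\partial S_n\cap S_{y,z}}$ with $\pi^{\omega_2}_{S_{y,z}}(\theta(\partial S_n\cap S_{y,z})=\zeta)>0$. Since $\partial S_n$ separates $S_n$ from its complement, the MRF property of $\pi^{\omega_2}_{S_{y,z}}$ together with the explicit form of the Gibbs specification gives
\[
\pi^{\omega_2}_{S_{y,z}}\!\left(\theta(\veczero)=a\,\middle|\,\theta(\partial S_n\cap S_{y,z})=\zeta\right)=\pi^{\,\zeta\cdot\omega_2(\partial S_n\cap\partial S_{y,z})}_{S_n}(\theta(\veczero)=a),
\]
where on $\partial S_n\setminus S_{y,z}$ the boundary condition is inherited from $\omega_2$. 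Averaging over $\zeta$ exhibits $\pi^{\omega_2}_{S_{y,z}}(\theta(\veczero)=a)$ as a convex combination of the right-hand sides.

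\emph{The SSM estimate and conclusion.} For each such $\zeta$, put $\xi:=\zeta\cdot\omega_2(\partial S_n\cap\partial S_{y,z})$ and $\eta:=\omega_1(\partial S_n)$, both configurations on $\partial S_n$. On $\bBdry S_n\subseteq\past$ both agree, since $\omega_1(\past)=\omega_2(\past)$, so $\Sigma_{\partial S_n}(\xi,\eta)\subseteq\tBdry S_n$ and hence $\dist(\{\veczero\},\Sigma_{\partial S_n}(\xi,\eta))\geq n$. Applying exponential SSM of the specification with $\Delta=S_n$ and $\Theta=\{\veczero\}$ (realizing $\pi^{\xi}_{S_n}$ and $\pi^{\eta}_{S_n}$ as conditional distributions of a common $\pi^{\upsilon}_{\block_{n+1}}$, which is legitimate since $\overline{S_n}\subseteq\block_{n+1}$) yields
\[
\left|\pi^{\xi}_{S_n}(\theta(\veczero)=a)-\pi^{\eta}_{S_n}(\theta(\veczero)=a)\right|\leq Ce^{-\alpha n}.
\]
Since $\pi^{\omega_2}_{S_{y,z}}(\theta(\veczero)=a)$ is a convex combination of the numbers $\pi^{\xi}_{S_n}(\theta(\veczero)=a)$, while $\pi^{\omega_1}_{S_n}(\theta(\veczero)=a)=\pi^{\eta}_{S_n}(\theta(\veczero)=a)$, the triangle inequality gives the claimed bound, uniformly in $\omega_1,\omega_2$.

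I expect the main obstacle to be the boundary bookkeeping: one must verify carefully that the two boundary conditions being compared on $\partial S_n$ can disagree only on the ``top'' part $\tBdry S_n$ (distance at least $n$ from the origin), and must route the comparison through a single SSM application so that the constant stays $C$ rather than degrading to $2C$ via a naive two-step comparison through an intermediate such as $\pi^{\omega_2}_{S_n}$; positivity of the conditional probabilities involved also needs a brief justification.
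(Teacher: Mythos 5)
Your proof is correct and follows essentially the same route as the paper's: condition on the configuration on $\partial S_n$ inside $S_{y,z}$, use the MRF property to rewrite $\pi^{\omega_2}_{S_{y,z}}(\theta(\veczero)=a)$ as a convex combination of $\pi^{\xi}_{S_n}(\theta(\veczero)=a)$, observe that each such $\xi$ agrees with $\omega_1(\partial S_n)$ on $\bBdry S_n$ so the disagreement lies in $\tBdry S_n$ at distance at least $n$ from the origin, and apply exponential SSM once. Your extra bookkeeping (splitting $\partial S_n$ into the parts inside $S_{y,z}$ and on $\partial S_{y,z}$, and realizing the two boundary conditions as conditionings of a common measure on a larger box) only makes explicit details the paper leaves implicit.
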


\begin{proof}
Fix $n \in \N$, $y,z \geq \vecone n$, $a \in \Symb$ and $\omega_1, \omega_2 \in \Omega(\mathcal{E})$ such $\omega_1(\past) = \omega_2 (\past)$. Denote $\xi := \omega_1(\partial S_n)$. Then:
\begin{align}
	&	\left| \pi_{S_n}^{\omega_1}(\theta(\veczero) = a) - \pi_{S_{y,z}}^{\omega_2}(\theta(\veczero) = a)\right|	\\
=~	&	\left| \pi_{S_n}^{\xi}(\theta(\veczero) = a) - \sum_{\eta}\pi_{S_{y,z}}^{\omega_2}(\theta(\veczero) = a \vert \eta)\pi_{S_{y,z}}^{\omega_2}(\eta)\right| \\
\leq~	& 	\sum_{\eta}\left| \pi_{S_n}^{\xi}(\theta(\veczero) = a) - \pi_{S_n}^{\eta}(\theta(\veczero) = a )\right|\pi_{S_{y,z}}^{\omega_2}(\eta)	\\
\leq~	& 	\sum_{\eta}Ce^{-{\alpha}\dist(\veczero,\Sigma_{\partial S_n}(\xi,\eta))}\pi_{S_{y,z}}^{\omega_2}(\eta) \leq Ce^{-{\alpha}n},
\end{align}
where the summation $\sum_{\eta}$ is taken over all $\eta \in \Symb^{\partial S_n}$ such that $\pi_{S_{y,z}}^{\omega_2}(\eta) > 0$ and $\eta\left(\bBdry S_n\right) = \omega_2\left(\bBdry S_n\right)$. The last inequality above follows from the fact that for any such $\eta$, $\Sigma_{\partial S_n}(\xi,\eta) \subseteq \tBdry S_n$, so:
\begin{equation}
\dist(\veczero,\Sigma_{\partial S_n}(\xi,\eta)) \geq \dist\left(\veczero,\tBdry S_n\right) = n.
\end{equation}
\end{proof}

\begin{definition}[Variational distance]
Let $S$ be a finite set and let $X_1$ and $X_2$ be two $S$-valued random variables with distribution $\rho_1$ and $\rho_2$, respectively. The \emph{variational distance $d_{TV}$} of $X_1$ and $X_2$ (or equivalently, of $\rho_1$ and $\rho_2$) is defined by:
\begin{equation}
d_{TV}(\rho_1,\rho_2) := \frac{1}{2} \sum_{x \in S} \left|\rho_1(x) - \rho_2(x)\right|.
\end{equation}
\end{definition}

It is well-known that $d_{TV}(\rho_1,\rho_2)$ is a lower bound on $\Prob(X_1 \neq X_2)$ over all couplings $\Prob$ of $\rho_1$ and $\rho_2$ and that there is a coupling, called the {\em optimal coupling}, that achieves this lower bound.

Given a Gibbs specification $\pi$, we define:
\begin{equation}
Q(\pi):= \max_{\omega_1,\omega_2 \in \Omega(\mathcal{E})} d_{TV}\left(\pi_{\{\veczero\}}^{\omega_1}(\cdot),\pi_{\{\veczero\}}^{\omega_2}(\cdot)\right).
\end{equation}

The following result is essentially in \cite{1-berg}.

\begin{theorem}
\label{vdBM-SSM}
Let $\pi$ be a Gibbs specification for a n.n. interaction $\Phi$ and a set of constraints $\mathcal{E}$, such that $\Omega(\mathcal{E})$ has a safe symbol. Then, if $p_{\rm c}$ denotes the critical value of site percolation on $\Z^2$ and $Q(\pi) < p_{\rm c}$, we have that $\pi$ satisfies exponential SSM.
\end{theorem}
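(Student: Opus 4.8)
Our plan is to obtain exponential SSM from a disagreement-percolation argument in the spirit of van den Berg and Maes. Since each $\pi_\Lambda^{\xi}$ is itself an MRF and the family $\pi$ is consistent, verifying SSM for $\pi$ reduces to bounding $\bigl|\pi_\Delta^{\xi}(\theta)-\pi_\Delta^{\eta}(\theta)\bigr|$ for finite $\Delta$, subsets $\Theta\subseteq\Delta$, configurations $\theta\in\Symb^{\Theta}$, and feasible boundary conditions $\xi,\eta\in\Symb^{\partial\Delta}$; write $\Sigma:=\Sigma_{\partial\Delta}(\xi,\eta)$. The heart of the proof is to construct, on a common probability space, a triple $(\omega^{\xi},\omega^{\eta},\omega_{\mathrm p})$ with $\omega^{\xi}\sim\pi_\Delta^{\xi}$, $\omega^{\eta}\sim\pi_\Delta^{\eta}$, and $\omega_{\mathrm p}$ a Bernoulli site percolation on $\Z^{2}$ of parameter $Q(\pi)$, such that the disagreement set $D:=\{x\in\Delta:\omega^{\xi}(x)\neq\omega^{\eta}(x)\}$ is contained in the $\omega_{\mathrm p}$-cluster of $\Sigma$. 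Granting this, since $\bigl|\pi_\Delta^{\xi}(\theta)-\pi_\Delta^{\eta}(\theta)\bigr|\le\Prob\bigl(\omega^{\xi}(\Theta)\neq\omega^{\eta}(\Theta)\bigr)\le\Prob(\Theta\cap D\neq\emptyset)\le\Prob(\Theta\leftrightarrow\Sigma\text{ in }\omega_{\mathrm p})$, the problem reduces to a percolation estimate.

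The triple is built by the standard disagreement-percolation exploration: one reveals the sites of $\Delta$ one at a time in a breadth-first manner and, at each revealed site $x$, samples $(\omega^{\xi}(x),\omega^{\eta}(x),\omega_{\mathrm p}(x))$ from a suitable coupling of the conditional laws of $\omega^{\xi}(x)$ and $\omega^{\eta}(x)$ given the already-revealed part of $(\omega^{\xi},\omega^{\eta})$, together with an independent Bernoulli($Q(\pi)$) bit $\omega_{\mathrm p}(x)$. Using the Markov random field property to propagate the exploration, one arranges that $x$ is put into $D$ only if $\omega_{\mathrm p}(x)=1$ and $x$ is adjacent to $\Sigma$ or to a site already in $D$: indeed, once the already-revealed neighbourhood of $x$ agrees in the two copies the relevant one-site conditional laws coincide, and in general they differ in total variation by at most $Q(\pi)$ by the very definition of $Q(\pi)$. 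The safe-symbol hypothesis guarantees that every partition function, specification and conditional law appearing in the construction is well defined and strictly positive, so the exploration never stalls; and since disagreement can spread only between adjacent sites, $D$ is contained in the $\omega_{\mathrm p}$-cluster of $\Sigma$.

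It then remains to estimate $\Prob(\Theta\leftrightarrow\Sigma\text{ in }\omega_{\mathrm p})\le\sum_{u\in\Theta}\sum_{v\in\Sigma}\Prob_{Q(\pi)}(u\leftrightarrow v)$. Since $Q(\pi)<p_{\rm c}$, subcritical site percolation on $\Z^{2}$ has exponentially decaying two-point connectivities (sharpness of the phase transition), i.e.\ $\Prob_{Q(\pi)}(u\leftrightarrow v)\le C\mathrm{e}^{-\alpha\dist(u,v)}$ for some $C,\alpha>0$. Summing the geometric tails first over $v\in\Z^{2}$ and then over $u\in\Theta$ yields a bound of the form $|\Theta|\,C'\mathrm{e}^{-\alpha'\dist(\Theta,\Sigma)}$ for suitable $C',\alpha'>0$, which is exactly the assertion that $\pi$ satisfies exponential SSM with rate $f(n)=C'\mathrm{e}^{-\alpha' n}$.

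The main obstacle is the rigorous construction and verification of the disagreement coupling in the present generality — a stationary nearest-neighbour interaction on an SFT admitting a safe symbol, rather than an unconstrained (fully supported, finite-energy) spin system — in particular checking that the Markov property permits an exploration in which a new disagreement is always local (adjacent to $\Sigma$ or to $D$) and is created with conditional probability at most $Q(\pi)$, and that this produces genuine stochastic domination of $D$ by the Bernoulli($Q(\pi)$) site-percolation cluster of $\Sigma$. This is essentially the content of \cite{1-berg}; the two remaining ingredients, namely the reduction to a connectivity probability and the exponential decay of subcritical site percolation, are standard.
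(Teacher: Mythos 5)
Your argument is correct and is essentially the paper's approach: the paper's proof simply invokes \cite{1-berg} (after passing to a fully supported Gibbs measure $\mu$ with $Q(\mu)\le Q(\pi)$, full support coming from the safe symbol), whereas you unfold the same van den Berg--Maes disagreement-percolation coupling directly at the level of the finite-volume measures and combine it with exponential decay of subcritical site-percolation connectivities. The only cosmetic difference is that the paper transfers exponential SSM from $\mu$ back to the specification via full support, while you work with the $\pi_\Delta^{\xi}$ from the start; both routes rest on the same coupling and the same percolation estimate.
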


\begin{proof}
Take $\mu$ any n.n. Gibbs measure for $\Phi$. Since $\Omega(\mathcal{E})$ has a safe symbol, $\mu$ is fully supported, i.e. $\supp(\mu) = \Omega(\mathcal{E})$ (very special case of \cite[Remark 1.14]{1-ruelle}). Given a $\Z^d$-MRF $\mu$, define:
\begin{equation}
Q(\mu):= \max_{\eta_1,\eta_2} d_{TV}(\mu(\theta(\veczero) = \cdot \vert \eta_1), \mu(\theta(\veczero) = \cdot \vert \eta_2)),
\end{equation}
where $\eta_1$ and $\eta_2$ range over all configurations on $\partial \{\veczero\}$ such that $\mu(\eta_1),\mu(\eta_2) > 0$. Then, $Q(\mu) \leq Q(\pi) < p_{\rm c}$, so by \cite[Theorem 1]{1-berg} and shift-invariance of $\Phi$, $\mu$ satisfies exponential SSM (see \cite[Theorem 3.10]{3-marcus}). Finally, since $\mu$ is fully supported, we can conclude that $\pi$ satisfies exponential SSM.
\end{proof}

\subsection{Stochastic dominance}

Suppose that $\Symb$ is a finite linearly ordered set. Then for any set $L$ (in our context, usually a set of sites or bonds), $\Symb^L$ is equipped with a natural partial order $\preceq$ which is defined coordinate-wise: for $\theta_1, \theta_2 \in \Symb^L$, we write $\theta_1 \preceq \theta_2$ if $\theta_1(x) \leq \theta_2(x)$ for every $x \in L$. A function $f: \Symb^L \to \R$ is said to be \emph{increasing} if $f(\theta_1) \leq f(\theta_2)$ whenever $\theta_1 \preceq \theta_2$. An event $A$ is said to be increasing if its characteristic function $\chi_A$ is increasing.

\begin{definition}
Let $\rho_1$ and $\rho_2$ be two probability measures on $\Symb^L$. We say that $\rho_1$ is \emph{stochastically dominated} by $\rho_2$, writing $\rho_1 \leq_D \rho_2$, if for every bounded increasing function $f: \Symb^L \to \R$ we have $\rho_1(f) \leq \rho_2(f)$, where $\rho(f)$ denotes the expected value $\mathbb{E}_{\rho}(f)$ of $f$ according to the measure $\rho$.
\end{definition}

\subsubsection{Stochastic dominance and connectivity decay for the bond random-cluster model}

Recall from Section \ref{bond_cluster} the bond random-cluster model on finite subsets of $\Z^2$ with boundary conditions $i = 0,1$, and the bond random-cluster model $\phi_{p,q}$ on $\Z^2$ (see page \pageref{phi-p-q}).

\begin{theorem}[{\cite[Equation (29)]{2-georgii}}]
\label{stoch_dom_RC}
For any $p \in [0,1]$ and $q \in \N$, and any $\Delta \subseteq \Lambda \Subset \Z^2$:
\begin{equation}
\phi^{(0)}_{p,q,\Delta} \leq_D \phi^{(0)}_{p,q,\Lambda} \mbox{ and } \phi^{(1)}_{p,q,\Lambda} \leq_D \phi^{(1)}_{p,q,\Delta}.
\end{equation}

In particular, if $p < p_{\rm c}(q)$, we have that, for any $\Lambda \Subset \Z^2$:
\begin{equation}
\phi^{(0)}_{p,q,\Lambda} \leq_D \phi_{p,q} \leq_D \phi^{(1)}_{p,q,\Lambda},
\end{equation}
where $\leq_D$ is with respect to the restriction of each measure to events on $E^0(\Lambda)$.
\end{theorem}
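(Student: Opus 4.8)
The plan is to derive every inequality from the FKG inequality for random-cluster measures, together with the standard fact that the two boundary conditions are obtained from one another by conditioning on monotone events; the statement is \cite[Equation (29)]{2-georgii} and the ingredients are in \cite{1-grimmett}, and I only indicate the structure.

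First I would record the structural input: since $q\in\N$ (so $q\geq 1$), each finite-volume measure $\phi^{(i)}_{p,q,\Lambda}$ satisfies the FKG lattice condition, because its weight $\left(\frac{p}{1-p}\right)^{\#_1(w)}q^{k^i_\Lambda(w)}$ is log-supermodular in $w$ (the cluster count $w\mapsto k^i_\Lambda(w)$ is supermodular on $\{0,1\}^{E^i(\Lambda)}$). By Holley's inequality this gives positive association, from which I extract the two monotonicity rules used repeatedly: conditioning a positively associated measure on an increasing (resp.\ decreasing) event can only increase (resp.\ decrease) it in the stochastic order $\leq_D$.

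Next, for the free comparison with $\Delta\subseteq\Lambda$, I would view $\phi^{(0)}_{p,q,\Delta}$ on $E^0(\Lambda)$ by declaring every bond of $E^0(\Lambda)\setminus E^0(\Delta)$ closed, and observe that this is precisely $\phi^{(0)}_{p,q,\Lambda}$ conditioned on the decreasing event that all those bonds are closed --- closing them only adds $|\Lambda\setminus\Delta|$ isolated-vertex components, i.e.\ an additive constant in the exponent of $q$, so the conditioned weights are proportional to the $\phi^{(0)}_{p,q,\Delta}$-weights. Conditioning on a decreasing event then yields $\phi^{(0)}_{p,q,\Delta}\leq_D\phi^{(0)}_{p,q,\Lambda}$. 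Dually, I would realize $\phi^{(1)}_{p,q,\Delta}$ as $\phi^{(1)}_{p,q,\Lambda}$ conditioned on the increasing event that every bond of $E^1(\Lambda)\setminus E^1(\Delta)$ is open; forcing those bonds open, on top of the external wiring $E^0(\Z^2\setminus\Lambda)$ already present, produces exactly the external wiring $E^0(\Z^2\setminus\Delta)$ built into the definition of $\phi^{(1)}_{p,q,\Delta}$, so conditioning on an increasing event gives $\phi^{(1)}_{p,q,\Lambda}\leq_D\phi^{(1)}_{p,q,\Delta}$.

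Finally, for the ``in particular'' statement I would fix $\Lambda$ and take any increasing exhaustion $\Lambda_n\uparrow\Z^2$ by finite simply lattice-connected sets with $\Lambda\subseteq\Lambda_1$: the first part shows that $n\mapsto\phi^{(0)}_{p,q,\Lambda_n}$ is $\leq_D$-increasing and $n\mapsto\phi^{(1)}_{p,q,\Lambda_n}$ is $\leq_D$-decreasing when all measures are restricted to events on $E^0(\Lambda)$, both sequences converge, and for $p<p_{\rm c}(q)$ the two limits coincide with the unique infinite-volume measure $\phi_{p,q}$ (\cite[p. 107]{1-grimmett}); passing to the limit in $\phi^{(0)}_{p,q,\Lambda}\leq_D\phi^{(0)}_{p,q,\Lambda_n}$ and $\phi^{(1)}_{p,q,\Lambda_n}\leq_D\phi^{(1)}_{p,q,\Lambda}$ sandwiches $\phi_{p,q}$ between them. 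The step I expect to need the most care is the cluster-count bookkeeping under the two conditionings --- especially checking in the wired case that the external connections $E^0(\Z^2\setminus\Lambda)$ together with the forced-open bonds of $E^1(\Lambda)\setminus E^1(\Delta)$ produce the same component structure, up to an irrelevant additive constant in the exponent of $q$, as the external connections $E^0(\Z^2\setminus\Delta)$ in the definition of $\phi^{(1)}_{p,q,\Delta}$; the rest is a routine application of FKG and of the uniqueness of $\phi_{p,q}$ in the subcritical regime.
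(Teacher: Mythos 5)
Your proposal is correct: the paper states this result without proof, citing \cite[Equation (29)]{2-georgii}, and your reconstruction is exactly the standard argument behind that citation --- the FKG lattice condition for $q\geq 1$ via supermodularity of the cluster count, realization of the smaller-volume free (resp.\ wired) measure as the larger-volume one conditioned on the decreasing (resp.\ increasing) event that the extra bonds are closed (resp.\ open), and a monotone passage to the infinite-volume limit where uniqueness for $p<p_{\rm c}(q)$ identifies both limits with $\phi_{p,q}$. The cluster-count bookkeeping you flag does check out: in the free case closing the bonds of $E^0(\Lambda)\setminus E^0(\Delta)$ isolates the vertices of $\Lambda\setminus\Delta$, contributing the additive constant $|\Lambda\setminus\Delta|$, and in the wired case $E^0(\Z^2\setminus\Lambda)\cup\bigl(E^1(\Lambda)\setminus E^1(\Delta)\bigr)=E^0(\Z^2\setminus\Delta)$, so the component structures agree exactly.
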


The following result was a key element of the proof that $\beta_c(q) = \log(1 + \sqrt{q})$ is the critical inverse temperature for the Potts model.  We will use this result in a crucial way.

Recall that for $p < p_{\rm c}(q)$,  $\phi_{p,q}$ is the unique bond
random cluster measure with parameters $p$ and $q$.

\begin{theorem}[{\cite[Theorem 2]{1-beffara}}]
\label{hugo}
Let $q \geq 1$. For any $p < p_{\rm c}(q) = \frac{\sqrt{q}}{1 + \sqrt{q}}$, the two-point connectivity function decays exponentially, i.e. there exist $0 < C(p,q), c(p,q) < \infty$ such that for any $x,y \in \Z^2$:
\begin{equation}
\phi_{p,q}(x \leftrightarrow y) \leq C(p,q)e^{-c(p,q)\|x-y\|_2},
\end{equation}
where $\{x \leftrightarrow y\}$ is the event that the sites $x$ and $y$ are connected by an open path and $\|\cdot\|_2$ is the Euclidean norm.
\end{theorem}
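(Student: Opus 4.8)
Since this is the deep theorem of Beffara and Duminil-Copin, and it is stated with $p_{\rm c}(q)=\tfrac{\sqrt q}{1+\sqrt q}=:p_{\rm sd}(q)$ (the self-dual point), the plan is to prove exponential decay of $\phi_{p,q}(x\leftrightarrow y)$ for every $p<p_{\rm sd}(q)$; this also re-derives $p_{\rm c}(q)=p_{\rm sd}(q)$, but that identity may be taken as given. There are three moving parts: planar duality, which forces crossing probabilities to be balanced at $p_{\rm sd}(q)$; a sharp-threshold argument, which shows that just below $p_{\rm sd}(q)$ box-crossing probabilities already tend to $0$; and a renormalization step, which upgrades this into genuine exponential decay of the two-point function.

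First I would set up rectangle-crossing events and extract the duality input. For a rectangle $R\Subset\Z^2$, let $\mathcal{H}(R)$ be the event of an open left-right crossing. By Proposition \ref{RC_dual_prob} (in the version valid for general finite simply lattice-connected regions), the dual of a wired configuration $\phi^{(1)}_{p,q,\cdot}$ is a free configuration $\phi^{(0)}_{p^*,q,\cdot}$ with $\tfrac{p^*}{1-p^*}=\tfrac{q(1-p)}{p}$, and $\mathcal{H}(R)$ fails exactly when the dual rectangle is crossed in the orthogonal direction by a dual open path. At the fixed point $p=p^*=p_{\rm sd}(q)$, the rectangle $[0,n]\times[0,n+1]$ and the dual rectangle it would be crossed across are congruent, so — up to the discrepancy between free and wired boundary conditions, which is absorbed via the comparison inequalities of Theorem \ref{stoch_dom_RC} — the crossing probability and its complement are comparable, hence bounded away from both $0$ and $1$ uniformly in $n$. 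I would then invoke RSW-type box-crossing theory for random-cluster measures (built on the FKG inequality, comparison between boundary conditions, and a conditioning/gluing argument) to propagate this into: at $p=p_{\rm sd}(q)$, the crossing probability of $[0,\rho n]\times[0,n]$ is bounded away from $0$ and $1$, uniformly in $n$, for every fixed aspect ratio $\rho$; this in turn controls the influence of individual edges on large box-crossing events.

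The crux is the sharp-threshold step: for $p<p_{\rm sd}(q)$ one must show $\phi_{p,q}(\mathcal{H}([0,n]^2))\to 0$. Here I would combine a Russo-type differential formula for $\tfrac{d}{dp}\phi_{p,q}(\mathcal{H})$ in terms of edge influences with the fact that $\phi_{p,q}$, although not a product measure, satisfies FKG and has uniformly positive conditional one-edge marginals, so the influence inequalities of Kahn-Kalai-Linial and Talagrand transfer to it in the form adapted to random-cluster measures by Graham and Grimmett; since no edge has large influence on a crossing of a large box (using the RSW input), the threshold window for $p\mapsto\phi_{p,q}(\mathcal{H}([0,n]^2))$ has width $O(1/\log n)$. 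Consequently there is a single value $\tilde p$ such that box-crossing probabilities tend to $0$ for $p<\tilde p$ and to $1$ for $p>\tilde p$; since at $p_{\rm sd}(q)$ they are bounded away from both $0$ and $1$, necessarily $\tilde p=p_{\rm sd}(q)$, giving $\phi_{p,q}(\mathcal{H}([0,n]^2))\to 0$ for every $p<p_{\rm sd}(q)$. I expect this to be the main obstacle, since both the influence/differential inequality for a non-product FKG measure and the RSW theory for general $q\ge 1$ (where no exact solvability is available) are substantial; the more recent differential-inequality approach of Duminil-Copin and Tassion, or the OSSS-based approach of Duminil-Copin, Raoufi and Tassion, would be alternative routes to the same conclusion.

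Finally, I would run a renormalization (finite-size-criterion) argument. Once $\phi_{p,q}(\mathcal{H}([0,3n]\times[0,n]))<\varepsilon_0$ for some fixed $n$ — which holds for all large $n$ by the previous step — a standard block-gluing argument (again using FKG and the comparison inequalities to handle boundary conditions) yields $\phi_{p,q}(0\leftrightarrow\partial\block_{kn})\le e^{-ck}$ for all $k$, whence $\phi_{p,q}(x\leftrightarrow y)\le\phi_{p,q}(x\leftrightarrow\partial(x+\block_r))\le C(p,q)\,e^{-c(p,q)\|x-y\|_2}$ with $r$ of order $\|x-y\|_2$. Since we are in the regime $p<p_{\rm c}(q)$, $\phi_{p,q}$ is the unique infinite-volume random-cluster measure with these parameters (see Section \ref{bond_cluster}), so all of the above — which a priori could be phrased for the finite-volume approximants $\phi^{(i)}_{p,q,\Lambda}$ — refers unambiguously to the measure in the statement.
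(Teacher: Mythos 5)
First, a point of comparison: the paper does not prove Theorem \ref{hugo} at all --- it is imported verbatim from \cite{1-beffara} and used as a black box in the proof of Theorem \ref{potts-decay}. So there is no internal argument to measure your proposal against; what you have written is an outline of the proof in the cited reference itself. As such an outline it identifies the right architecture (self-duality, RSW-type crossing bounds, a Graham--Grimmett sharp-threshold step for a monotonic non-product measure, and a finite-size renormalization), and your final renormalization paragraph is essentially correct modulo standard gluing.

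As a proof, however, there are genuine gaps, and one of them is a step that would fail as stated. You claim that at $p = p_{\rm sd}(q)$ the discrepancy between free and wired boundary conditions ``is absorbed via the comparison inequalities of Theorem \ref{stoch_dom_RC}'', yielding crossing probabilities bounded away from $0$ and $1$ uniformly in $n$. The comparison inequality $\phi^{(0)}_{p,q,\Lambda} \leq_D \phi^{(1)}_{p,q,\Lambda}$ points in the unhelpful direction here: combined with duality (Proposition \ref{RC_dual_prob}) and the symmetry of the near-square rectangle it gives only $\phi^{(0)}_{p,q,\Lambda}(\mathcal{H}) \leq \tfrac{1}{2} \leq \phi^{(1)}_{p,q,\Lambda}(\mathcal{H})$, i.e.\ an upper bound for the free measure and a lower bound for the wired one, and no uniform lower bound away from $0$ (resp.\ upper bound away from $1$) for either. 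Obtaining a crossing estimate that is uniform over boundary conditions is precisely the main technical contribution of \cite{1-beffara} (their second-moment ``bridging'' argument), and it cannot be replaced by monotonicity alone. The other two components you invoke --- RSW theory for the random-cluster model with general $q \geq 1$ in the absence of exact solvability, and the transfer of the Kahn--Kalai--Linial/Talagrand influence inequalities to a non-product FKG measure --- are likewise each substantial theorems whose proofs you defer entirely. The honest conclusion is that your proposal is a correct roadmap of the cited literature rather than a proof, and the one place where you assert that a lemma already available in this paper closes a gap (Theorem \ref{stoch_dom_RC}) is a place where it does not.
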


\subsubsection{Stochastic dominance for the site random-cluster model}

\begin{lemma}
\label{unlikely0}
Given a set $\Lambda \Subset \Z^d$ and parameters $p \in [0,1]$ and $q > 0$, we have that for any $x \in \Lambda$ and any $\tau \in \{0,1\}^{\Lambda \setminus \{x\}}$:
\begin{equation}
p_1(q) \leq \psi^{(1)}_{p,q,\Lambda}(\theta(x) = 1 \vert \tau) \leq p_2(q),
\end{equation}
where $p_1(q) = \frac{pq}{pq + (1-p)q^{2d}}$ and $p_2(q) = \frac{pq}{pq + (1-p)}$. In consequence,
\begin{equation}
\psi_{p_1(q),\Lambda} \leq_D \psi^{(1)}_{p,q,\Lambda} \leq_D \psi_{p_2(q),\Lambda}.
\end{equation}

(Recall that $\Psi_{p,\Lambda}$ denotes Bernoulli site percolation).
\end{lemma}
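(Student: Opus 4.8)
The plan is to estimate the single-site conditional probability $\psi^{(1)}_{p,q,\Lambda}(\theta(x) = 1 \mid \tau)$ directly from the defining formula \eqref{siteRC}, and then invoke the standard fact that uniform control of one-site conditionals against i.i.d.\ measures yields stochastic domination. First I would fix $x \in \Lambda$ and $\tau \in \{0,1\}^{\Lambda \setminus \{x\}}$, and write
\begin{equation}
\psi^{(1)}_{p,q,\Lambda}(\theta(x) = 1 \mid \tau) = \frac{\lambda^{\#_1(\tau) + 1} q^{\kappa_\Lambda(1^{\{x\}}\tau)}}{\lambda^{\#_1(\tau) + 1} q^{\kappa_\Lambda(1^{\{x\}}\tau)} + \lambda^{\#_1(\tau)} q^{\kappa_\Lambda(0^{\{x\}}\tau)}} = \frac{\lambda q^{\Delta\kappa}}{\lambda q^{\Delta\kappa} + 1},
\end{equation}
where $\lambda = p/(1-p)$ and $\Delta\kappa := \kappa_\Lambda(1^{\{x\}}\tau) - \kappa_\Lambda(0^{\{x\}}\tau)$ is the change in the number of (bulk, i.e.\ not touching $\underline{\partial}\Lambda$) connected components of $1$'s when we switch the value at $x$ from $0$ to $1$. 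The whole problem reduces to bounding $\Delta\kappa$.

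The key combinatorial step is the observation that setting $\theta(x) = 1$ either creates a brand-new isolated component at $x$ (if none of the $2d$ neighbours of $x$ in $\Lambda$ carries a $1$ that belongs to a bulk component, in particular if $x$ has no $1$-neighbours), increasing the count by at most $1$; or it merges $x$ into one or more existing components, possibly amalgamating up to $2d$ of them into one, decreasing the count by as much as $2d - 1$; there is also the subtlety that a component counted in the bulk may cease to be counted if attaching $x$ makes it touch $\underline{\partial}\Lambda$, but this only decreases $\kappa$ further, and the reverse cannot happen. Hence $-(2d-1) \le \Delta\kappa \le 1$, equivalently $q^{-(2d-1)} \le q^{\Delta\kappa} \le q$ for $q \ge 1$ (and the reversed but still valid bounds $q^{-(2d-1)} \le q^{\Delta\kappa} \le q$ hold for $0 < q < 1$ after swapping, so in either case $q^{\Delta\kappa}$ lies between $q^{1-2d}$ and $q$). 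Plugging the extreme values into the monotone (in $q^{\Delta\kappa}$) expression $\frac{\lambda t}{\lambda t + 1}$ and using $\lambda = p/(1-p)$ gives, after clearing denominators, exactly
\begin{equation}
\frac{pq^{1-2d}}{pq^{1-2d} + (1-p)} \;\le\; \psi^{(1)}_{p,q,\Lambda}(\theta(x) = 1 \mid \tau) \;\le\; \frac{pq}{pq + (1-p)},
\end{equation}
and multiplying numerator and denominator of the lower bound by $q^{2d-1}$ rewrites it as $\frac{pq}{pq + (1-p)q^{2d}} = p_1(q)$, while the upper bound is $p_2(q)$ as stated (for $0<q<1$ one checks the same two numbers still sandwich the conditional, since the roles of the two extremes of $q^{\Delta\kappa}$ simply swap).

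For the stochastic domination conclusion I would apply the standard monotone-coupling criterion (Holley-type / Strassen-type): if for every $x$ and every boundary condition $\tau$ the conditional probability that the site is $1$ lies in $[p_1(q), p_2(q)]$, then one can build site-by-site a monotone coupling of $\psi_{p_1(q),\Lambda}$, $\psi^{(1)}_{p,q,\Lambda}$ and $\psi_{p_2(q),\Lambda}$ respecting the coordinatewise order, which gives $\psi_{p_1(q),\Lambda} \leq_D \psi^{(1)}_{p,q,\Lambda} \leq_D \psi_{p_2(q),\Lambda}$; since Bernoulli percolation has product form its one-site conditionals are the constants $p_1(q)$, $p_2(q)$ regardless of $\tau$, so the hypothesis of the criterion is met. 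The main obstacle is the bookkeeping in the first combinatorial step — carefully justifying the bounds $-(2d-1)\le \Delta\kappa \le 1$, in particular handling the components that intersect $\underline{\partial}\Lambda$ and are therefore not counted by $\kappa_\Lambda$, and making sure the merging can genuinely involve all $2d$ neighbours only in the worst case — but once $\Delta\kappa$ is pinned down the rest is algebra and an off-the-shelf coupling lemma.
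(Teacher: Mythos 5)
Your proposal is correct and is essentially the argument the paper intends: the paper's ``proof'' is only a pointer to the one-site finite-energy computation in Grimmett (p.~339) and Higuchi--Takei (Lemma 5.4, $q=2$), and you have written out exactly that computation plus the standard site-by-site monotone coupling for the domination claim. The only place where your write-up, read literally, does not yet establish the stated bound is the sentence ``this only decreases $\kappa$ further'': if attaching $x$ could make the merged cluster touch $\underline{\partial}\Lambda$ on top of amalgamating $2d$ counted clusters, you would get $\Delta\kappa=-2d$ and a worse constant than $p_1(q)$. The resolution is a short case analysis: if the merged cluster is uncounted because $x\in\underline{\partial}\Lambda$, then $x$ has at most $2d-1$ neighbours in $\Lambda$, so at most $2d-1$ counted clusters are lost; if it is uncounted because one of the merging clusters already met $\underline{\partial}\Lambda$, then that cluster was not counted before the flip either, so again at most $2d-1$ counted clusters are lost. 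With that observation $1-2d\le\Delta\kappa\le 1$ holds in all cases and the rest of your argument goes through.
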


\begin{proof}
This result is obtained by adapting the discussion on \cite[p. 339]{1-grimmett} to the wired site random-cluster model. See also \cite[Lemma 5.4]{1-higuchi} for the case $q = 2$.
\end{proof}

\subsubsection{Stochastic dominance for the Potts model}
\label{Potts_stoch}

As before, let $q \in \Symb_q$ denote a fixed, but arbitrary, choice of a colour. Let $\Lambda \Subset \Z^d$ and consider $g:\Symb_q^\Lambda \rightarrow \{+,-\}^\Lambda$ be defined by:
\begin{equation}
(g(\theta))(x) = \begin{cases}
+	& \mbox{ if } \theta(x) = q,		\\
-	& \mbox{ if } \theta(x) \neq q.
\end{cases}
\end{equation}

The function $g$ makes the non-$q$ colours indistinguishable and gives a \emph{reduced model}. We say $\theta \simeq \theta'$ if $g(\theta) = g(\theta')$.  This relation defines a partition of $\Symb_q^\Lambda$ and unions of elements of this partition form a sub-algebra of $\Symb_q^\Lambda$, which can be identified with the collection of all subsets of $\{+,-\}^\Lambda$. Let $\pi_{\beta,\Lambda}^+ := g_* \pi_{\beta,\Lambda}^{\omega_q}$ be the push-forward measure, which is nothing more than the restriction (projection) of $\pi_{\beta,\Lambda}^{\omega_q}$ to $\{+,-\}^\Lambda$. Chayes showed that the FKG property holds on events in this reduced model. In particular:

\begin{proposition}[{\cite[Lemma on p. 211]{1-chayes}}]
\label{Potts_FKG}
For all $\beta > 0$ and $\Lambda \Subset \Z^2$, $\pi_{\beta,\Lambda}^+$ satisfies the following properties:
\begin{enumerate}
\item For increasing subsets $A,B \subseteq \{+,-\}^\Lambda$: $\pi_{\beta,\Lambda}^+(A \ | \ B) \geq \pi_{\beta,\Lambda}^+(A)$.
\item If $A$ is decreasing and $B$ is increasing, then: $\pi_{\beta,\Lambda}^+(A \ | \ B) \leq \pi_{\beta,\Lambda}^+(A)$.
\item If $\Delta \subseteq \Lambda$ and $A$ is an increasing subset of $\{+,-\}^\Delta$, then: $\pi_{\beta,\Delta}^+(A) \geq \pi_{\beta,\Lambda}^+(A)$.
\end{enumerate}
\end{proposition}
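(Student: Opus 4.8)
The plan is to reduce all three items to a single fact: that the reduced measure $\pi^+_{\beta,\Lambda}$ satisfies the FKG lattice condition (Holley's criterion) on the distributive lattice $\{+,-\}^{\Lambda}$; the FKG theorem then yields (1) and (2), and a short monotonicity-in-volume argument yields (3). First I would record the density of $\pi^+_{\beta,\Lambda}$ explicitly. Since the ferromagnetic Potts interaction has no hard constraints, $\pi^+_{\beta,\Lambda}$ is strictly positive on $\{+,-\}^{\Lambda}$; summing $\pi^{\omega_q}_{\beta,\Lambda}$ over the fibre $\{\theta:g(\theta)=s\}$ and splitting the monochromatic bonds of $\theta q^{\partial\Lambda}$ according to whether both endpoints carry colour $q$ (the ``$+$-type'' sites, together with all of $\partial\Lambda$) or not, one gets
\begin{equation}
\pi^+_{\beta,\Lambda}(s)\ \propto\ e^{\beta\,b_\Lambda(s)}\, Z^{q-1}_{G[W(s)]},\qquad W(s):=\{x\in\Lambda:s(x)=-\},
\end{equation}
where $b_\Lambda(s)$ counts the bonds of $\overline{\Lambda}$ with both endpoints in $\{x\in\Lambda:s(x)=+\}\cup\partial\Lambda$, $G[W]$ is the subgraph of $\Z^2$ induced by a vertex set $W$, and $Z^{q-1}_{G[W]}$ is the partition function of the $(q-1)$-state ferromagnetic Potts model at inverse temperature $\beta$ with free boundary condition on $G[W]$.

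The core step is to verify that $\log\pi^+_{\beta,\Lambda}$ is supermodular, i.e. $\pi^+_{\beta,\Lambda}(s\vee t)\,\pi^+_{\beta,\Lambda}(s\wedge t)\ge\pi^+_{\beta,\Lambda}(s)\,\pi^+_{\beta,\Lambda}(t)$. The term $b_\Lambda$ is a sum over bonds of products of at most two increasing $\{0,1\}$-valued functions of $s$, hence supermodular. The term $\log Z^{q-1}_{G[W(s)]}$ depends on $s$ only through $W(s)$, which is order-reversing in $s$ with $W(s\vee t)=W(s)\cap W(t)$ and $W(s\wedge t)=W(s)\cup W(t)$; so its supermodularity in $s$ is precisely the correlation inequality
\begin{equation}
Z^{q-1}_{G[W_1\cup W_2]}\, Z^{q-1}_{G[W_1\cap W_2]}\ \ge\ Z^{q-1}_{G[W_1]}\, Z^{q-1}_{G[W_2]}\qquad\text{for all vertex sets }W_1,W_2 .
\end{equation}
I expect this inequality to be the main obstacle. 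For $q=2$ it is elementary, since $Z^{1}_{G[W]}=e^{\beta|E(G[W])|}$ and $E(G[W_1])\cap E(G[W_2])=E(G[W_1\cap W_2])$ while $E(G[W_1])\cup E(G[W_2])\subseteq E(G[W_1\cup W_2])$. For general $q$ it is a genuine ferromagnetic Potts correlation inequality, which one establishes through the Fortuin--Kasteleyn expansion $Z^{q-1}_{G[W]}=\sum_{F\subseteq E(G[W])}(e^{\beta}-1)^{|F|}(q-1)^{c(W,F)}$, with $c(W,F)$ the number of connected components of $(W,F)$, together with an FKG/Holley argument on edge configurations; this is the content of \cite[Lemma on p.~211]{1-chayes}.

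Granting the FKG lattice condition for the strictly positive measure $\pi^+_{\beta,\Lambda}$, the three conclusions follow routinely. The FKG theorem gives positive association, i.e. $\pi^+_{\beta,\Lambda}(A\cap B)\ge\pi^+_{\beta,\Lambda}(A)\,\pi^+_{\beta,\Lambda}(B)$ for increasing $A,B$, which is (1); applying (1) to the increasing event $A^{c}$ when $A$ is decreasing gives (2). For (3), one checks by comparing densities that conditioning $\pi^+_{\beta,\Lambda}$ on $\{s\equiv+\text{ on }\Lambda\setminus\Delta\}$ produces exactly $\pi^+_{\beta,\Delta}$: the factor $Z^{q-1}_{G[W(\cdot)]}$ is literally the same, and the bonds from $\Delta$ to the all-$+$ exterior reproduce the $+$ boundary field of $\pi^+_{\beta,\Delta}$ along $\partial\Delta$ up to an $s$-independent constant. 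Since the FKG lattice condition makes the conditional law of $\pi^+_{\beta,\Lambda}$ on $\Delta$ stochastically monotone in the exterior configuration, and $+^{\Lambda\setminus\Delta}$ is its maximal value, we get $\pi^+_{\beta,\Lambda}(A\mid\text{exterior}=\tau)\le\pi^+_{\beta,\Lambda}(A\mid\text{exterior}=+^{\Lambda\setminus\Delta})=\pi^+_{\beta,\Delta}(A)$ for every $\tau$ and every increasing $A\subseteq\{+,-\}^{\Delta}$; averaging over $\tau$ yields $\pi^+_{\beta,\Lambda}(A)\le\pi^+_{\beta,\Delta}(A)$.
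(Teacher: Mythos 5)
Your proposal is correct and follows essentially the same route as the paper: the core positive-association property in part (1) is, in both cases, ultimately deferred to the cited Chayes lemma (you unpack it into the FKG lattice condition for the reduced measure and the partition-function inequality $Z^{q-1}_{G[W_1\cup W_2]}Z^{q-1}_{G[W_1\cap W_2]}\ge Z^{q-1}_{G[W_1]}Z^{q-1}_{G[W_2]}$, which is precisely what the proof of that lemma establishes), and parts (2) and (3) are then routine consequences exactly as in the paper. The only cosmetic difference is in (3): the paper conditions on the increasing cylinder $+^{\partial\Delta}$ and applies (1) directly via the Markov property of $\pi^{\omega_q}_{\beta,\Lambda}$, whereas you condition on $+^{\Lambda\setminus\Delta}$ and invoke stochastic monotonicity of the conditional law in the exterior configuration; both are valid, and since your conditioning event is itself increasing with a singleton preimage under $g$, an application of (1) alone would already suffice there.
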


\begin{proof}
~
\begin{enumerate}
\item This is contained in \cite[Lemma on p. 211]{1-chayes}.
\item This is an immediate consequence of (1).
\item This is a standard consequence of (1): Let $B = +^{\partial \Delta}$. Since $g^{-1}(B)$ is a single configuration, namely $q^{\partial \Delta}$, we obtain from the Markov property of $\pi_{\beta,\Lambda}^{\omega_q}$ that $\pi_{\beta,\Delta}^+(A) = \pi_{\beta,\Lambda}^+(A \ | \ B)$. From (1), we have $\pi_{\beta,\Lambda}^+(A \ | \ B) \geq \pi_{\beta,\Lambda}^+(A)$. Now, combine the previous two statements.
\end{enumerate}
\end{proof}

\begin{remark}
\label{Remark_FKG}
The preceding result immediately applies to $\pi_{\beta,\Lambda}^{\omega_q}$ for events in $\Symb_q^\Lambda$ that are measurable with respect to $\{+,-\}^\Lambda$, viewed as a sub-algebra of $\Symb_q^\Lambda$.
\end{remark}

\subsubsection{Volume monotonicity for the Widom-Rowlinson model with $2$ types}

For the classical Widom-Rowlinson model ($q=2$), Higuchi and Takei showed that the FKG property holds. In particular,

\begin{proposition}[{\cite[Lemma 2.3]{1-higuchi}}]
\label{WR_FKG}
Fix $q = 2$ and let $\Delta \subseteq \Lambda \Subset \Z^d$ and $\lambda > 0$. Then:
\begin{equation}
\pi_{\Lambda}^{\lambda}(\omega_q) \leq \pi_{\Delta}^{\lambda}(\omega_q).
\end{equation}
\end{proposition}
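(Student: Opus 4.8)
The plan is to deduce this volume monotonicity from the FKG property of the two-type Widom--Rowlinson specification, in direct analogy with the proof of Proposition~\ref{Potts_FKG}(3). Since $q=2$, the alphabet $\SymbWR=\{0,1,2\}$ can be made into a three-element chain by declaring $t<0<q$, where $t\in\{1,\dots,q\}\setminus\{q\}$ is the type distinct from $q$; this induces the coordinatewise order $\preceq$ on $\SymbWR^\Delta$ as in Section~\ref{section7}. With respect to this order $q$ is the top symbol, so $\{\theta:\theta(\veczero)=q\}$ is an increasing event, and for any finite $\Delta$ the configuration $q^{\partial\Delta}$ is the coordinatewise-largest feasible boundary condition on $\partial\Delta$. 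The key input is that each specification element $\pi_{\lambda,\Delta}^\xi$ satisfies Holley's lattice condition with respect to this order --- this is \cite[Lemma~2.3]{1-higuchi} (stated there for $\Z^2$, but the verification is local and dimension-independent) --- whence the FKG inequality holds: $\pi_{\lambda,\Delta}^\xi(A\mid B)\geq\pi_{\lambda,\Delta}^\xi(A)$ whenever $A$ and $B$ are increasing and $\pi_{\lambda,\Delta}^\xi(B)>0$.

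Now fix $\veczero\in\Delta\subseteq\Lambda\Subset\Z^d$ (implicitly assumed so that both sides of the inequality are defined; the case $\Delta=\Lambda$ is trivial). Set $B:=\{\theta:\theta(x)=q\text{ for all }x\in\partial\Delta\cap\Lambda\}$, which is increasing and $\mathcal{F}_{\partial\Delta\cap\Lambda}$-measurable. Because the constant configuration $q^{\overline{\Lambda}}$ is feasible, $\pi_{\lambda,\Lambda}^{\omega_q}(\theta=q^\Lambda)>0$, so $\pi_{\lambda,\Lambda}^{\omega_q}(B)>0$. Moreover $\partial\Delta\cap\Lambda^c\subseteq\partial\Lambda$, and the boundary condition $\omega_q$ already forces $\theta\equiv q$ there, so on the event $B$ the configuration on all of $\partial\Delta$ equals $q^{\partial\Delta}$. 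By the Markov (consistency) property of the Gibbs specification --- the same step used in the proof of Proposition~\ref{Potts_FKG}(3) --- the conditional law of $\pi_{\lambda,\Lambda}^{\omega_q}$ on $\SymbWR^\Delta$ given $B$ is therefore $\pi_{\lambda,\Delta}^{q^{\partial\Delta}}=\pi_{\lambda,\Delta}^{\omega_q}$. Combining this with the FKG inequality applied to the two increasing events $\{\theta(\veczero)=q\}$ and $B$ gives
\begin{equation*}
\pi_\Delta^\lambda(\omega_q)=\pi_{\lambda,\Delta}^{\omega_q}(\theta(\veczero)=q)=\pi_{\lambda,\Lambda}^{\omega_q}(\theta(\veczero)=q\mid B)\geq\pi_{\lambda,\Lambda}^{\omega_q}(\theta(\veczero)=q)=\pi_\Lambda^\lambda(\omega_q),
\end{equation*}
which is the claim. (The same argument works verbatim if $\pi_\Lambda^\lambda(\omega_q)$ is read as $\pi_{\lambda,\Lambda}^{\omega_q}(\theta\equiv q\text{ on }\Lambda)$, using in addition that $\Delta\subseteq\Lambda$.)

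The only substantive ingredient is the FKG lattice property of the two-type Widom--Rowlinson specification, i.e.\ \cite[Lemma~2.3]{1-higuchi}; this is exactly where $q=2$ is used, since for $q\geq 3$ there is no natural linear order on $\SymbWR$ under which the hard constraint is FKG. Should one want a self-contained argument, the work is to verify $\pi_{\lambda,\Delta}^\xi(\upsilon\vee\upsilon')\,\pi_{\lambda,\Delta}^\xi(\upsilon\wedge\upsilon')\geq\pi_{\lambda,\Delta}^\xi(\upsilon)\,\pi_{\lambda,\Delta}^\xi(\upsilon')$ (Holley's criterion), which reduces to (i) the set of feasible configurations being closed under coordinatewise $\vee$ and $\wedge$ --- an adjacent $\{t,q\}$ pair in $\upsilon\vee\upsilon'$ or $\upsilon\wedge\upsilon'$ would already force one in $\upsilon$ or $\upsilon'$, and the per-site boundary constraints cut out order-convex subsets of the chain --- and (ii) log-supermodularity of the on-site activity weight, which is immediate since $\SymbWR$ is a chain and the weight factors over sites. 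Everything else (positivity of $\pi_{\lambda,\Lambda}^{\omega_q}(B)$, the Markov property, monotonicity of the relevant events) is routine.
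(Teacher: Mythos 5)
Your proof is correct, and it follows essentially the route the paper intends: the paper gives no proof of Proposition~\ref{WR_FKG}, citing \cite[Lemma 2.3]{1-higuchi} for the FKG property of the two-type Widom--Rowlinson model under the order $t<0<q$ and presenting volume monotonicity as its consequence, which is exactly the deduction you carry out (in parallel with the proof of Proposition~\ref{Potts_FKG}(3), conditioning on the all-$q$ event on $\partial\Delta\cap\Lambda$ and using the Markov/consistency property). Your sketch of Holley's criterion --- closure of the hard-core constraint under $\vee$ and $\wedge$ for the chain order, plus the product form of the activity weight --- is also sound and correctly identifies where $q=2$ is used.
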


However, this kind of stochastic monotonicity can fail for general $q$ (see \cite[p. 60]{2-georgii}).

%%%%%%%%%%%%%%%%%%%%%%%%%%%%%%%%%%%%%%%%%%%%%%%%%%

\section{Exponential convergence of $\pi_{n}$ in $\Z^2$ lattice models}
\label{section8}

In this section, we consider the Potts, Widom-Rowlinson and hard-core models and establish exponential convergence results that will lead to pressure representation and approximation algorithms  for these lattice models.

Recall that for the Potts model, $\pi^\beta_{y,z}(\omega) =  \pi^{\omega(\partial S_{y,z})}_{\beta, S_n}(\theta(\veczero) = \omega(\veczero))$ and, in particular, $\pi^\beta_n(\omega) =  \pi^{\omega(\partial S_n)}_{\beta, S_n}(\theta(\veczero) = \omega(\veczero))$, with similar notation for the Widom-Rowlinson and hard core models.

\subsection{Exponential convergence in the Potts model}

\begin{theorem}
\label{potts-decay}
For the Potts model with $q$ types and inverse temperature $\beta$, there exists a critical parameter $\beta_{\rm c}(q) > 0$ such that for $0 < \beta \neq \beta_c(q)$, there exists $C,\alpha > 0$ such that, for every $y,z \geq \vecone n$:
\begin{equation}
\label{potts-bound}
\left| \pi^\beta_{n}(\omega_q) - \pi^\beta_{y,z}(\omega_q)\right| \leq Ce^{-{\alpha}n}.
\end{equation}
\end{theorem}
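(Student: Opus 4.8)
The plan is to split into the subcritical case $\beta < \beta_{\rm c}(q)$ and the supercritical case $\beta > \beta_{\rm c}(q)$, since the two regimes require genuinely different tools, and in both cases reduce the claimed bound \eqref{potts-bound} to an exponential decay statement for the associated bond random-cluster model, which is then supplied by Theorem \ref{hugo} (possibly after planar duality). First I would dispose of the subcritical case: here $p = 1 - e^{-\beta} < p_{\rm c}(q)$, there is a unique Gibbs measure $\mu$ for $\pi_\beta$, and the result of \cite{1-alexander} gives the ratio strong mixing property for $\mu$ on finite simply lattice-connected sets. Since the Potts n.n.\ SFT has a safe symbol, $\supp(\mu) = \Symb_q^{\Z^2}$, so Proposition \ref{SSMequiv} yields exponential SSM for the family $\{S_{y,z}\}_{y,z \geq 0}$, and then Proposition \ref{SSM_subcritical} (applied to $\omega_1 = \omega_2 = \omega_q$, which agree on the past trivially) gives exactly \eqref{potts-bound}.

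The supercritical case $\beta > \beta_{\rm c}(q)$ is the heart of the matter and is where I would follow \cite{1-chayes} closely. The idea is to express $\pi^\beta_{y,z}(\omega_q) = \pi^{\omega_q(\partial S_{y,z})}_{\beta, S_{y,z}}(\theta(\veczero) = q)$ through the reduced $\{+,-\}$ model of Section \ref{Potts_stoch}: by Remark \ref{Remark_FKG} the event $\{\theta(\veczero) = q\}$ is $\{+,-\}$-measurable, so I work with $\pi^+_{\beta, S_{y,z}}$. Using the FKG inequalities of Proposition \ref{Potts_FKG}, the quantity $\pi^+_{\beta, S_{y,z}}(\veczero \text{ is } +)$ is monotone in the region $S_{y,z}$, so the difference $\pi^\beta_n(\omega_q) - \pi^\beta_{y,z}(\omega_q)$ for $y,z \geq \vecone n$ is squeezed between $0$ and $\pi^+_{\beta, S_n}(\veczero) - \pi^+_{\beta, \Z^2}(\veczero)$ (the infinite-volume $+$ measure, which exists as a monotone limit). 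Hence it suffices to bound the rate at which $\pi^+_{\beta, S_n}(\veczero \text{ is } +)$ converges to its infinite-volume limit. By a standard argument, the event that $\veczero$ sees a different value under the two boundary conditions forces the existence of a $\{+,-\}$ "contour" — really a $\ast$-connected circuit of $-$ sites (i.e.\ non-$q$ colours) in $S_n$ surrounding $\veczero$, or equivalently, in the wired Edwards–Sokal coupling, a dual open path in the dual bond random-cluster model from near $\veczero$ to distance $\sim n$. Since $\beta > \beta_{\rm c}(q)$ corresponds to $p > p_{\rm c}(q)$, hence to the dual parameter $p^* < p_{\rm c}(q)$ by \eqref{dual_prob} and the remark following Proposition \ref{RC_dual_prob}, Theorem \ref{hugo} applied to $\phi_{p^*, q}$ gives exponential decay of this dual connectivity; summing over the $O(n)$ possible locations and the geometric series in the contour length still gives a bound $Ce^{-\alpha n}$.

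Concretely, the key steps in order are: (i) subcritical case via \cite{1-alexander} $+$ Propositions \ref{SSMequiv} and \ref{SSM_subcritical}; (ii) in the supercritical case, pass to the reduced $\{+,-\}$ model and use Proposition \ref{Potts_FKG} to reduce to controlling $\pi^+_{\beta,S_n}(\veczero) - \pi^+_{\beta,\Z^2}(\veczero)$; (iii) bound the discrepancy event by the existence of a surrounding $\ast$-circuit of non-$q$ sites, and translate this via the wired Edwards–Sokal coupling (Theorem \ref{edward}) and duality (Proposition \ref{RC_dual_prob}, equation \eqref{dual_prob}) into a dual connectivity event for $\phi_{p^*,q,\block_n}$ with $p^* < p_{\rm c}(q)$; (iv) use the stochastic domination $\phi^{(0)}_{p^*,q,\block_n} \leq_D \phi_{p^*,q}$ from Theorem \ref{stoch_dom_RC} together with the exponential connectivity decay of Theorem \ref{hugo} to bound this probability; (v) sum the resulting estimate over the polynomially many possible anchor sites and geometrically many contour lengths to obtain $Ce^{-\alpha n}$.

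The main obstacle I anticipate is step (iii): making the "contour forces a dual path" argument fully rigorous in this finite-volume, boundary-conditioned setting. One must be careful that the relevant event is genuinely increasing/decreasing so that FKG applies after conditioning, that the $\ast$-circuit of $-$ sites really does correspond (via Edwards–Sokal) to a dual open path rather than merely a dual open circuit (an innermost/outermost circuit argument, à la Kesten's lemma already invoked for Lemma \ref{pseudoMRF}, is needed here), and that the duality identity of Proposition \ref{RC_dual_prob}, which is stated for the specific shapes $\tilde{\block}_n$ and $\block_n$, can be deployed for the shapes $S_{y,z}$ actually appearing — this is presumably why the statement only claims a bound for $y,z \geq \vecone n$ and uses the monotonicity in (ii) to replace general $S_{y,z}$ by nice symmetric boxes. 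Handling the boundary of $S_n$ carefully (it is not a full square, being truncated by the past) is the fiddly part; everything downstream is then a routine Peierls-type summation.
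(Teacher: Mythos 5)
Your proposal follows essentially the same route as the paper: Part II (subcritical) is identical — Alexander's ratio strong mixing plus Propositions \ref{SSMequiv} and \ref{SSM_subcritical} — and Part I (supercritical) is the same Chayes-style argument through the reduced $\{+,-\}$ model, FKG volume monotonicity, the Edwards--Sokal coupling, planar duality, and Theorem \ref{hugo}. The outline is correct, so I will only flag the two places where your sketch is imprecise and where the paper's execution is the content of the proof. First, the discrepancy event is not a $\star$-circuit of $-$ surrounding $\veczero$ but the event $\Path^{-\star}_{\partial S_n}$ of a $\star$-path of non-$q$ sites from $\veczero$ to $\partial S_n$; its complement is the existence of an all-$q$ blocking path across $S_n$ (which, together with the all-$q$ bottom boundary, surrounds $\veczero$), and the key step is the one-sided inequality $\pi^{\omega_q}_{\beta,S_{y,z}}(\theta(\veczero)=q \mid (\Path^{-\star}_{\partial S_n})^{\rm c}) \geq \pi^{\omega_q}_{\beta,S_n}(\theta(\veczero)=q)$, proved by decomposing over the outermost such all-$q$ path and invoking the strong Markov property plus Proposition \ref{Potts_FKG}; this replaces your comparison with an infinite-volume limit and yields directly $0 \leq \pi^\beta_n(\omega_q)-\pi^\beta_{y,z}(\omega_q) \leq \pi^{\omega_q}_{\beta,S_{y,z}}(\Path^{-\star}_{\partial S_n})$. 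Second, on the random-cluster side a single dual open path is not the right object: one needs the conjunction of two FK events, an open circuit in the annulus $\block_{2k}\setminus\block_k$ \emph{and} an open connection from the inner box to the wired boundary (without the latter, Edwards--Sokal colours the circuit uniformly at random rather than $q$, which matters for $q>2$); it is the complements of these two events that are bounded, via Proposition \ref{RC_dual_prob} and Theorem \ref{stoch_dom_RC}, by dual connectivities $\phi_{p^*,q}(x\leftrightarrow y)$ with $p^*<p_{\rm c}(q)$ and summed. Both points are exactly the "fiddly parts" you anticipated, and neither invalidates your plan, but they are where the actual work lies.
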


\begin{proof}
In the supercritical region $\beta > \beta_c(q)$, our proof very closely follows ~\cite[Theorem 3]{2-chayes}, which treated the Ising case. We fill in some details of their proof, adapting that proof in two ways:  to a  half-plane version of their result (the quantities in (\ref{potts-bound}) are effectively half-plane quantities) and to the general Potts case. For the subcritical region $\beta < \beta_c(q)$, the proposition will follow easily from~\cite[Theorem 1.8 (ii)]{1-alexander}.

\bigskip
\noindent
{\bf Part I: $\beta>\beta_{\rm c}(q)$.} Let $\Path^{-\star}_{\partial S_n}$ denote the event that there is a $\star$-path of $-$ from $\veczero$ to $\partial S_n$, i.e. a path that runs along ordinary $\Z^2$ bonds and diagonal bonds where the colour at each site is \emph{not} $q$ (in our context below, the configuration on the bottom piece $\bBdry S_{y,z}$ of ${\partial S_n}$ will be all $q$ and thus a $\star$-path of $-$ from $\veczero$ to $\partial S_n$ cannot terminate on $\bBdry S_n$). Note that $\Path^{-\star}_{\partial S_n}$ is an event that is measurable with respect to the sub-algebra $\{+,-\}^{\Lambda}$, for any finite set $\Lambda$ containing $S_n$, introduced in Section \ref{Potts_stoch} (recall that this sub-algebra corresponds to the reduced Potts model).

By decomposing $\pi^{\beta}_{y,z}(\omega_q)$ into probabilities conditional on $\Path^{-\star}_{\partial S_n}$ and $(\Path^{-\star}_{\partial S_n})^c$ , we obtain:
\begin{align}
\label{Eq1}
	&	\pi^{\beta}_n(\omega_q) - \pi^{\beta}_{y,z}(\omega_q)	 \\
=~	&	\pi^{\omega_q}_{\beta,S_n}(\theta(\veczero) = q) - \pi^{\omega_q}_{\beta,S_{y,z}}(\theta(\veczero) = q)	\\
=~	&	\pi^{\omega_q}_{\beta,S_{y,z}}(\Path^{-\star}_{\partial S_n})\left(\pi^{\omega_q}_{\beta,S_n}(\theta(\veczero) = q) - \pi^{\omega_q}_{\beta,S_{y,z}}(\theta(\veczero) = q \vert \Path^{-\star}_{\partial S_n})\right)  \nonumber \\
	&	+ (1-\pi^{\omega_q}_{\beta,S_{y,z}}(\Path^{-\star}_{\partial S_n}))\left(\pi^{\omega_q}_{\beta,S_n}(\theta(\veczero) = q) - \pi^{\omega_q}_{\beta,S_{y,z}}(\theta(\veczero) = q \vert (\Path^{-\star}_{\partial S_n})^{\rm c})\right) \label{Eq5}.
\end{align}

We claim that the expression in (\ref{Eq1}) is nonnegative. To see this, observe that the events $\{\omega(\veczero) = q\}$, $\{\omega(\partial S_n) = q^{\partial S_n}\}$ and
 $\{\omega(\partial S_{y,z}) = q^{\partial S_{y,z}}\}$
may be viewed as the events $\{\omega(\veczero) = +\}$, $\{\omega(\partial S_n) = +^{\partial S_n}\}$
and $\{\omega(\partial S_n) = +^{\partial S_{y,z}}\}$ in the sub-algebra $\{+,-\}^{S_{y,z}}$ of the reduced model, as discussed in Section \ref{Potts_stoch}. Now, apply Proposition \ref{Potts_FKG} (part 3) and Remark \ref{Remark_FKG}.

We next claim that:
\begin{equation}
\label{decompose}
\pi^{\omega_q}_{\beta,S_{y,z}}(\theta(\veczero) = q \vert (\Path^{-\star}_{\partial S_n})^{\rm c}) \geq \pi^{\omega_q}_{\beta,S_n}(\theta(\veczero) = q).
\end{equation}

To be precise, first observe that  $\omega \in (\Path^{-\star}_{\partial S_n})^c$ iff $\omega$ contains an all-$q$ path in $S_n$ from $\partial \past \cap \{x_1 < 0\}$ to $\partial \past \cap \{x_1 > 0\}$. So, $(\Path^{-\star}_{\partial S_n})^c$ can be decomposed into a disjoint collection of events determined by the unique furthest such path from $\veczero$.  Using the MRF property of Gibbs measures, it follows that we can regard each of these events as an increasing event in  $\{+,-\}^{S_m}$. Now, apply Proposition \ref{Potts_FKG} and Remark \ref{Remark_FKG}. (The reader may notice that here we have essentially used the strong Markov property (see \cite[p. 1154]{4-georgii}).)

Thus, (\ref{Eq5}) is nonpositive. This, together with the fact that $\pi^{\omega_q}_{\beta,S_n}(\theta(\veczero) = q \vert \Path^{-\star}_{\partial S_n}) = 0$, yields:
\begin{equation}
0 \leq \pi^{\beta}_n(\omega_q) - \pi^{\beta}_{y,z}(\omega_q) \leq \pi^{\omega_q}_{\beta,S_{y,z}}(\Path^{-\star}_{\partial S_n})\pi^{\omega_q}_{\beta,S_n}(\theta(\veczero) = q) \leq \pi^{\omega_q}_{\beta,S_{y,z}}(\Path^{-\star}_{\partial S_n}).
\end{equation}

So, it suffices to show that $\sup_{y,z \geq \vecone n} \pi^{\omega_q}_{\beta,S_{y,z}}(\Path^{-\star}_{\partial S_n})$ decays exponentially in $n$. Fix $y,z \geq \vecone n$ and let $m > n$ such that $\vecone m \geq y,z$. By Proposition \ref{Potts_FKG} (parts 2 and 3) and Remark \ref{Remark_FKG},
\begin{align}
\label{SnBn}
\pi_{\beta,S_{y,z}}^{\omega_q}(\Path^{-\star}_{\partial S_n})	&	\leq \pi_{\beta,S_m}^{\omega_q}(\Path^{-\star}_{\partial S_n})	\\
												&	= \pi_{\beta,\block_m}^{\omega_q}(\Path^{-\star}_{\partial S_n} \vert q^\past) \leq \pi_{\beta,\block_m}^{\omega_q}(\Path^{-\star}_{\partial S_n}) 	\leq	\pi_{\beta,\block_m}^{\omega_q}(\Path^{-\star}_{\partial \block_n}).
\end{align}

So, it suffices to show that $\sup_{m > n}\pi^{\omega_q}_{\beta,\block_m}(\Path^{-\star}_{\partial \block_n})$ decays exponentially in $n$. Recall the Edwards-Sokal coupling $\Prob^{(1)}_{p,q,\block_m}$ for the Gibbs distribution and the corresponding bond random-cluster measure with wired boundary condition $ \phi_{p,q,\block_m}^{(1)}$ (see Section~\ref{bond_cluster}).

W.l.o.g., let's suppose that $n$ is even, i.e. $n = 2k < m$, for some $k \in \N$. We consider the following two events in the bond random-cluster model, as in \cite[Theorem 3]{1-chayes}. Let $\Rscr_n$ be the event of an open circuit in $\block_{2k} \setminus \block_{k}$ that surrounds $\block_{k}$. Let $\Mscr_{n,m}$ be the event in which there is an open path from some site in $\block_{k}$ to $\partial \block_m$. The joint occurrence of these two events forces the Potts event $(\Path_{\partial\block_n}^{-\star})^{\rm c}$ in the coupling: $\Rscr_n \cap \Mscr_{n,m} \subseteq (\Path_{\partial\block_n}^{-\star})^c$ (here, technically, we are identifying these events with their inverse images of the projections in the coupling).

Then, by the coupling property:
\begin{align}
\pi_{\beta,\block_m}^{\omega_q}\left((\Path_{\partial\block_n}^{-\star})^c\right)	&   =		\Prob^{(1)}_{p,q,\block_m}\left((\Path_{\partial\block_n}^{-\star})^c\right)   \\
                                           										&   \geq    \Prob^{(1)}_{p,q,\block_m}\left((\Path_{\partial\block_n}^{-\star})^c\middle\vert \Rscr_n\cap\Mscr_{n,m}\right)\Prob^{(1)}_{p,q,\block_m}\left(\Rscr_n\cap\Mscr_{n,m}\right)  \\
                                                										&   =  	\phi^{(1)}_{p,q,\block_m}\left(\Rscr_n \cap \Mscr_{n,m}\right),
\end{align}
so:
\begin{equation}
\pi_{\beta,\block_m}^{\omega_q}(\Path_{\partial\block_n}^{-\star})\le 1-\phi^{(1)}_{p,q,\block_m}(\Rscr_n\cap\Mscr_{n,m})\le \phi^{(1)}_{p,q,\block_m}(\Rscr_n^{\rm c})+  \phi^{(1)}_{p,q,\block_m}(\Mscr_{n,m}^{\rm c}).
 \end{equation}

Therefore,
\begin{equation}
\label{bound1}
\sup_{m > n} \pi_{\beta,\block_m}^{\omega_q}(\Path_{\partial\block_n}^{-\star}) \leq \sup_{m > n} \phi^{(1)}_{p,q,\block_m}(\Rscr_n^{\rm c}) + \sup_{m > n}\phi^{(1)}_{p,q,\block_m}(\Mscr_{n,m}^{\rm c}).
 \end{equation}

The first term on the right hand side of \eqref{bound1} is bounded from above as follows:
\begin{align}
\phi^{(1)}_{p,q,\block_{m}}(\Rscr_n^{\rm c})	&	\leq	\phi^{(1)}_{p,q,\tilde{\block}_{m+1}}(\Rscr_n^{\rm c})	\\
									&	\leq \sum_{x \in \partial \block_{k}, y \in \underline{\partial} \block_{2k}} \phi^{(0)}_{p^*,q,\block_{m+1}}(x \leftrightarrow y)	\\
									&	\leq \sum_{x \in \partial \block_{k}, y \in \underline{\partial} \block_{2k}} \phi_{p^*,q}(x \leftrightarrow y),
\end{align}
where $\tilde{\block}_m = [-m+1,m]^2 \cap \Z^2$ and $p^*$ denotes the dual of $p$ and the inequalities follow from Proposition \ref{RC_dual_prob} and Theorem \ref{stoch_dom_RC}.

If $p > p_{\rm c}(q)$, then $p^* < p_{\rm c} (q)$, and by Theorem \ref{hugo}, the first term on the right side of \eqref{bound1} is upper bounded by $64C(p^*,q)n^2\exp(-c(p^*,q)n/4)$,  since $|\partial \block_{k}||\underline{\partial} \block_{2k}| \leq 64n^2$ and $\|x-y\|_2 \geq k-1 \geq \frac{n}{4}$, for all $x \in \partial \block_{k}$ and $y \in \underline{\partial} \block_{2k}$. So, the first term on the right side of \eqref{bound1} decays exponentially.

As for the second term, in order for $\Mscr_{n,m}$ to fail to occur, there must be a closed circuit in $\block_m \setminus \block_{k}$ and in particular a closed path from $L_{m,n} := \block_m \setminus \block_{k} \cap \{x_1 < 0, x_2 = 0\}$ to $R_{m,n} := \block_m \setminus \block_{k} \cap \{x_1 > 0, x_2 = 0\}$ in $\block_m$. Thus,
\begin{align}
\phi^{(1)}_{p,q,\block_m}(\Mscr_{n,m}^{\rm c})	&	\leq \phi^{(1)}_{p,q,\tilde{\block}_{m+1}}(\Mscr_{n,m}^{\rm c})	\\
									&	\leq \sum_{x \in L_{m,n}, y \in R_{m,n}}  \phi^{(0)}_{p^*,q,\block_{m+1}} (x \leftrightarrow y)	\\
									&	\leq \sum_{x \in L_{m,n}, y \in R_{m,n}}  \phi_{p^*,q}(x \leftrightarrow y),
\end{align}
where the last inequality follows by Proposition \ref{RC_dual_prob} and Proposition \ref{stoch_dom_RC}. By Theorem \ref{hugo}, this is less than:
\begin{align}
\sum_{i = n, j = n} C(p^*,q)e^{-c(p^*,q)(i+j)}	&	\leq C(p^*,q)\left(e^{-c(p^*,q)n}\frac{1}{1-e^{-c(p^*,q)}}\right)^2	\\
									&	= \frac{C(p^*,q)}{(1-e^{-c(p^*,q)})^2}e^{-2c(p^*,q)n}.
\end{align}

Thus, the 2nd term on the right side of \eqref{bound1} decays exponentially, so $\sup_{m > n} \pi^{\omega_q}_{\beta,m}(\Path^{-\star}_{\partial B_n})$ decays exponentially in $n$. Thus, by (\ref{SnBn}) $\sup_{m > n} \pi^{\omega_q}_{\beta,m}(\Path^{-\star}_{\partial S_n})$ also decays exponentially in $n$, as desired.

\bigskip
\noindent
{\bf Part II: $\beta<\beta_{\rm c}(q)$.} Recall from Section \ref{section7} the notions of strong spatial mixing and ratio strong mixing property.

\begin{theorem}[{\cite[Theorem 1.8 (ii)]{1-alexander}}]
\label{Alexander2004}
For the $\Z^2$ Potts model with $q$ types and inverse temperature $\beta$, if $0 < \beta < \beta_{\rm c}(q)$ and exponential decay of the two-point connectivity function holds for the corresponding random-cluster model, then the (unique) Potts Gibbs measure satisfies the ratio strong mixing property for the class of finite simply lattice-connected sets.
\end{theorem}

\begin{corollary}
\label{potts-ratio}
For the $\Z^2$ Potts model with $q$ types and inverse temperature $0 < \beta < \beta_{\rm c}(q)$, the specification $\pi^{\mathrm{FP}}_\beta$ satisfies exponential SSM for the family of sets $\{S_{y,z}\}_{y,z \geq 0}$.
\end{corollary}

\begin{proof}
This follows immediately from Theorem \ref{hugo}, Theorem \ref{Alexander2004} and Proposition \ref{SSMequiv}.
\end{proof}

Then, since exponential SSM holds for the class of finite simply lattice-connected sets when $\beta < \beta_{\rm c}(q)$, the desired result follows directly from Proposition \ref{SSM_subcritical}.

This completes the proof of Theorem \ref{potts-decay}. 
\end{proof}

\subsection{Exponential convergence in the Widom-Rowlinson model}

Recall that for Bernoulli site percolation in $\Z^2$ there exists a probability parameter $p_{\rm c}$, known as the \emph{percolation threshold}, such that for $p < p_{\rm c}$, there is no infinite cluster of $1$'s $\psi_{p,\Z^2}$-almost surely and for $p > p_{\rm c}$, there is such a cluster $\psi_{p,\Z^2}$-almost surely. Similarly, one can define an analogous parameter $p^\star_{\rm c}$ for the lattice $\Z^{2,\star}$, which satisfies $p_{\rm c} + p^\star_{\rm c} = 1$ (see \cite{1-russo}).

\begin{theorem}
\label{widom-decay}
For the Widom-Rowlinson model with $q$ types and activity $\lambda$, there exist two critical parameters $0 < \lambda_1(q) < \lambda_2(q)$ such that for $\lambda < \lambda_1(q)$ or $\lambda > \lambda_2(q)$, there exists $C,\alpha > 0$ such that, for every $y,z \geq \vecone n$:
\begin{equation}
\label{widom-bound}
\left| \pi^\lambda_{n}(\omega_q) - \pi^\lambda_{y,z}(\omega_q)\right| \leq Ce^{-{\alpha}n}.
\end{equation}
\end{theorem}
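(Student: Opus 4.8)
The plan is to treat the two parameter ranges separately, mirroring the structure of the proof of Theorem \ref{potts-decay}. \textbf{Subcritical range $\lambda<\lambda_1(q)$.} This half is short. A direct computation with the single-site distributions of $\pi_\lambda$ shows that $Q(\pi_\lambda)\to 0$ as $\lambda\to 0$: for any feasible configuration $\xi$ on $\partial\{\veczero\}$, the colour $0$ has $\pi^\xi_{\lambda,\{\veczero\}}$-probability at least $(1+q\lambda)^{-1}$, so every single-site distribution is within $O(\lambda)$ of $\delta_0$ in total variation and hence $Q(\pi_\lambda)=O(\lambda)$. Since the Widom-Rowlinson model has the safe symbol $0$, Theorem \ref{vdBM-SSM} gives exponential SSM of $\pi_\lambda$ as soon as $Q(\pi_\lambda)<p_{\rm c}$; define $\lambda_1(q)$ to be the supremum of such $\lambda$. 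Then (\ref{widom-bound}) follows at once from Proposition \ref{SSM_subcritical} applied with $\omega_1=\omega_2=\omega_q$, for which the hypothesis $\omega_1(\past)=\omega_2(\past)$ holds trivially.

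\textbf{Supercritical range $\lambda>\lambda_2(q)$.} Here $Q(\pi_\lambda)\to 1$, so disagreement percolation cannot be applied to $\pi_\lambda$ directly, and, in contrast with the Potts case, there is no FKG inequality available for general $q$ with which to copy the monotonicity arguments of Part I of Theorem \ref{potts-decay}. The idea is to move to the wired site random-cluster model via Proposition \ref{WR_cluster}: with $p=\lambda/(1+\lambda)$, the occupation map $f$ pushes $\pi^{\omega_q}_{\lambda,\Lambda}$ forward to $\psi^{(1)}_{p,q,\Lambda}$, and, conditionally on the occupation configuration, each occupied cluster meeting $\underline{\partial}\Lambda$ is coloured $q$ while each of the remaining clusters receives an independent uniform colour in $\{1,\dots,q\}$. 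By Lemma \ref{unlikely0}, $\psi^{(1)}_{p,q,\Lambda}$ is stochastically sandwiched between Bernoulli site percolations with parameters $p_1(q)$ and $p_2(q)$; since $p_1(q)\to 1$ and $p_2(q)-p_1(q)\to 0$ as $\lambda\to\infty$, we may fix $\lambda_2(q)$ so large that $1-p_1(q)<p^\star_{\rm c}$ (so the set of empty sites, being stochastically dominated by $\star$-Bernoulli percolation with parameter $1-p_1(q)$, is $\star$-subcritical and its $\star$-connectivities decay exponentially) and $p_2(q)-p_1(q)<p_{\rm c}$ (so the disagreement percolation below is subcritical). Because $\psi^{(1)}$ is not a Markov random field, the disagreement percolation has to be run in the ``variation'' permitted by Lemma \ref{pseudoMRF}: conditioning on a $\star$-circuit of occupied sites decouples the inside from the outside, which legitimises the conditional-independence steps just as the Markov property would for an honest MRF.

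Concretely, I would couple two Widom-Rowlinson samples $\theta^{(1)}\sim\pi^{\omega_q}_{\lambda,S_n}$ and $\theta^{(2)}\sim\pi^{\omega_q}_{\lambda,S_{y,z}}$ through their occupation configurations $\eta^{(i)}=f(\theta^{(i)})$: the disagreement-percolation step (carried out on $\psi^{(1)}$ via Lemma \ref{pseudoMRF}, with the disagreement cluster dominated by a subcritical percolation since $p_2(q)-p_1(q)<p_{\rm c}$) produces a coupling under which $\eta^{(1)}$ and $\eta^{(2)}$ agree on $S_{\lceil n/2\rceil}$ off an event of probability $\le Ce^{-\alpha n}$. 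Given this agreement there are two cases. If $\veczero$'s occupied cluster does not meet $\underline{\partial}S_{\lceil n/2\rceil}$, it is common to $\eta^{(1)}$ and $\eta^{(2)}$ and is ``free'' (it meets the inner boundary of neither $S_n$ nor $S_{y,z}$) in both, so its uniform colour may be coupled to agree, giving $\theta^{(1)}(\veczero)=\theta^{(2)}(\veczero)$. If $\veczero$'s occupied cluster does reach $\underline{\partial}S_{\lceil n/2\rceil}$, then the extra event that it fails to reach $\underline{\partial}S_n$ (resp.\ $\underline{\partial}S_{y,z}$), and hence fails to be coloured $q$, forces a $\star$-circuit of empty sites of diameter $\gtrsim n$, which has probability $\le Ce^{-\alpha n}$ by the $\star$-subcriticality fixed above; off this event $\veczero$ is coloured $q$ in both samples. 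Altogether $\Prob(\theta^{(1)}(\veczero)\ne\theta^{(2)}(\veczero))\le Ce^{-\alpha n}$, and since $\pi^\lambda_n(\omega_q)=\Prob(\theta^{(1)}(\veczero)=q)$ and $\pi^\lambda_{y,z}(\omega_q)=\Prob(\theta^{(2)}(\veczero)=q)$ under this coupling, (\ref{widom-bound}) follows. The main obstacle will be making the disagreement-percolation coupling for the non-Markovian $\psi^{(1)}$ fully rigorous — revealing occupation variables in an order for which Lemma \ref{pseudoMRF} applies at each ``closing-off'' step — and controlling the passage from occupation disagreements to colour disagreements, where the $\star$-percolation duality $p_{\rm c}+p^\star_{\rm c}=1$ and the domination bounds of Lemma \ref{unlikely0} do the real work; the half-plane bookkeeping ($S_{y,z}$ versus $S_n$, and the all-$q$ bottom boundary) is routine and parallels the corresponding parts of the proof of Theorem \ref{potts-decay}.
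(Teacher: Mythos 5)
Your subcritical half is exactly the paper's argument --- compute $Q(\pi^{\mathrm{WR}}_\lambda)=\frac{q\lambda}{1+q\lambda}$, invoke Theorem \ref{vdBM-SSM} and then Proposition \ref{SSM_subcritical} with $\omega_1=\omega_2=\omega_q$ --- and it is fine, yielding $\lambda_1(q)=\frac{1}{q}\bigl(\frac{p_{\rm c}}{1-p_{\rm c}}\bigr)$. The supercritical half, however, has a genuine gap at its central step. You propose to dominate the set of \emph{disagreement} sites between the two occupation samples by Bernoulli site percolation with parameter $p_2(q)-p_1(q)$ and conclude because $p_2(q)-p_1(q)<p_{\rm c}$. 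That domination is the van den Berg--Maes bound, and its proof requires that whenever the revealed disagreement cluster gets sealed off by a circuit of \emph{agreement} sites, the two conditional measures on the unexplored interior coincide, so the coupling can continue on the diagonal. For an MRF this is automatic; for $\psi^{(1)}_{p,q,\Lambda}$ it is false in general, and Lemma \ref{pseudoMRF} only rescues it when the sealing $\star$-circuit is \emph{entirely occupied} (an all-$1$ circuit is a single cluster, so no amalgamation through the outside can change the component count inside). An agreement circuit containing $0$'s --- which domination of disagreements by Bernoulli$(p_2-p_1)$ would routinely produce --- gives no conditional independence, and the revelation scheme you gesture at ("an order for which Lemma \ref{pseudoMRF} applies at each closing-off step") cannot be arranged for disagreement clusters. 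The paper's resolution is to percolate instead on the event $(\theta_1(x),\theta_2(x))\neq(1,1)$: by Lemma \ref{unlikely0} and the optimal single-site coupling this has conditional probability at most $1-p_1(q)=\frac{q^3}{\lambda+q^3}$, a region sealed off by $(1,1)$-sites is exactly where Lemma \ref{pseudoMRF} applies, and the single condition $\frac{q^3}{\lambda+q^3}<p^\star_{\rm c}$, i.e. $\lambda>q^3\bigl(\frac{p_{\rm c}}{1-p_{\rm c}}\bigr)=\lambda_2(q)$, closes the argument. The operative threshold is thus $p^\star_{\rm c}$ for $\star$-percolation of non-$(1,1)$ sites, not $p_{\rm c}$ for disagreements.

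A secondary point: your entire colouring step is unnecessary, and its case analysis is off. Since $S_{y,z}=\{x\succcurlyeq\veczero:-y\le x\le z\}$, the origin is adjacent to sites of $\past$, hence to $\partial S_{y,z}$, which carries the colour $q$ under $\omega_q$; the hard constraint then forces $\theta(\veczero)\in\{0,q\}$, so $\pi^\lambda_{y,z}(\omega_q)=\psi^{(1)}_{p,q,S_{y,z}}(\theta(\veczero)=1)$ \emph{exactly}, and the whole problem reduces to the occupation variables. In particular, an occupied origin always lies in a cluster meeting $\underline{\partial}S_{y,z}$, so your first case never occurs and the $\star$-circuit-of-empty-sites estimate is never needed.
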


\begin{proof}

As in the proof of Theorem \ref{potts-decay}, we split the proof in two parts.

\bigskip
\noindent
{\bf Part I: $\lambda > \lambda_2(q): = q^3\left(\frac{p_{\rm c}}{1-p_{\rm c}}\right)$.} Fix $n \in \N$ and $y,z \geq \vecone n$. Notice that, due to the constraints of the Widom-Rowlinson model, and recalling Proposition \ref{WR_cluster}:
\begin{equation}
\pi^\lambda_{y,z}(\omega_q) = \pi_{\lambda, S_{y,z}}^{\omega_q}(\theta(\veczero) = q) = \psi^{(1)}_{p,q,S_{y,z}}(\theta(\veczero) = 1),
\end{equation}
where $p = \frac{\lambda}{1+\lambda}$, and the same holds for $\pi^\lambda_{n}(\omega_q)$. Then, it suffices to prove that:
\begin{equation}
\label{widom-bound}
\left| \psi^{(1)}_{p,q,S_n}(\theta(\veczero) = 1) - \psi^{(1)}_{p,q,S_{y,z}}(\theta(\veczero) = 1)\right| \leq Ce^{-{\alpha}n},
\end{equation}
for some $C,\alpha > 0$.

Notice that $\veczero \in S_n \subseteq S_{y,z} =: \Lambda$. Fix any ordering on the set $\overline{\Lambda}$. From now on, when we talk about comparing sites in $\overline{\Lambda}$, it is assumed we are speaking of this ordering. For convenience, we will extend configurations on $S_n$ and $\Lambda$ to configurations on $\overline{\Lambda}$ by appending $1^{\overline{\Lambda} \setminus S_n}$ and $1^{\partial \Lambda}$, respectively.

Now, we will proceed to define a coupling $\Prob_{n,y,z}$ of $\psi^{(1)}_{p,q,S_n}$ and $\psi^{(1)}_{p,q,\Lambda}$, defined on pairs of configurations $(\theta_1, \theta_2) \in \{0,1\}^{\overline{\Lambda}} \times \{0,1\}^{\overline{\Lambda}}$.  The coupling is defined one site at a time, using values from previously defined sites.

We use $(\tau^t_1,\tau^t_2)$ to denote the (incomplete) configurations on $\overline{\Lambda} \times \overline{\Lambda}$ at step $t = 0,1,\dots,|S_n|$. We therefore begin with $\tau^0_1 = 1^{\overline{\Lambda} \setminus S_n}$ and $\tau^0_2 = 1^{\partial \Lambda}$. Next, we
 set $\tau^1_1  = \tau^0_1$ and form $\tau^1_2$ by extending $\tau^0_2$ to $\overline{\Lambda} \setminus S_n$, choosing randomly according to the distribution $\psi^{(1)}_{p,q,\Lambda}\left(\cdot \middle\vert 1^{\partial \Lambda}\right)$. At this point of the construction, both $\tau^1_1$ and $\tau^1_2$ have shape $\overline{\Lambda} \setminus S_n$. In the end, $(\tau^{|S_n|}_1,\tau^{|S_n|}_2)$ will give as a result a pair $(\theta_1, \theta_2)$.

At any step $t$, we use $W^t$ to denote the set of sites in $\overline{\Lambda}$ on which $\tau^t_1$ and $\tau^t_2$ have already received values in previous steps. In particular, $W^1 = \overline{\Lambda} \setminus S_n$. At an arbitrary step $t$ of the construction, we choose the next site $x^{t+1}$ on which to assign values in $\tau^{t+1}_1$ and $\tau^{t+1}_2$ as follows:
\begin{enumerate}
\item[(i)] If possible, take $x^{t+1}$ to be the smallest site in $\partial^\star W^t$ that is $\star$-adjacent to a site $y \in W^t$ for which $(\tau^t_1(y),\tau^t_2(y)) \neq (1,1)$.
\item[(ii)] Otherwise, just take $x^{t+1}$ to be the smallest site in $\partial^\star W^t$.
\end{enumerate}

Notice that at any step $t$, $W^t$ is a $\star$-connected set, and that it it always possible to find the next site $x^{t+1}$ for any $t < |S_n|$ (i.e. the two rules above give a well defined procedure).

Now we are ready to augment the coupling from $W^t$ to $W^t \cup \{x^{t+1}\}$ by assigning $\tau^{t+1}_1(x^{t+1})$ and $\tau^{t+1}_2(x^{t+1})$ according to an optimal coupling of $\left.\psi^{(1)}_{p,q,S_n}\left(\cdot \middle\vert \tau^t_1\right)\right|_{\{x^{t+1}\}}$ and $\left.\psi^{(1)}_{p,q,S_{y,z}}( \cdot \ | \ \tau^t_2)\right|_{\{x^{t+1}\}}$, i.e. a coupling which minimizes the probability that, given $(\tau^t_1,\tau^t_2)$, $\theta_1(x^{t+1}) \neq \theta_2(x^{t+1})$. Since $\Prob_{n,y,z}$ is defined site-wise, and at each step is assigned according to $\psi^{(1)}_{p,q,S_n}\left(\cdot \middle\vert \tau^t_1\right)$ in the first coordinate and $\psi^{(1)}_{p,q,S_{y,z}}( \cdot \ | \ \tau^t_2)$ in the second, the reader may check that it is indeed a coupling of $\psi^{(1)}_{p,q,S_n}$ and $\psi^{(1)}_{p,q,S_{y,z}}$. The key property of $\Prob_{n,y,z}$ is the following.

\begin{lemma}
\label{path}
$\theta_1(\veczero) \neq \theta_2(\veczero)$ $\Prob_{n,y,z}$-a.s. if and only if there exists a path $\Path$ of $\star$-adjacent sites from $\veczero$ to $\partial S_n$, such that for each site $y \in \Path$, $(\theta_1(y),\theta_2(y)) \neq (1,1)$.
\end{lemma}

\begin{proof}
Suppose, for a contradiction, that $\theta_1(\veczero) \neq \theta_2(\veczero)$ and there exists no such path. This implies that there exists a circuit $\Circ$ surrounding $\veczero$ (when we include the bottom boundary as part of $\Circ$) and contained in $\overline{S}_n$ such that for all $y \in \Circ$, $(\theta_1(y),\theta_2(y)) = (1,1)$. Define by $I$ the simply lattice-$\star$-connected set of sites in the interior of $\Circ$ and, let's say that at time $t_0$, $x^{t_0}$ was the first site within $I$ defined according to the site-by-site evolution of $\Prob_{n,y,z}$. Then, $(\tau^{t_0}_1(x^{t_0}),\tau^{t_0}_2(x^{t_0}))$ cannot have been defined according to rule (i) since all sites $\star$-adjacent to $x^{t_0}$ are either in $I$ (and therefore not yet defined by definition of $x^{t_0}$), or on $\Circ$ (and therefore either not yet defined or sites at which $\theta_1$ and $\theta_2$ are both $1$).

Therefore, $(\theta_1(x^{t_0}),\theta_2(x^{t_0}))$ was defined according to rule (ii). We therefore define the set $D := \overline{\Lambda} \setminus W^{t_0 - 1} \supseteq I$, and note that $\veczero$ and $x^{t_0}$ belong to the same $\star$-connected component $\Theta$ of $D$. We also know that $\tau^{t_0 - 1}_1(\partial^\star D) = \tau^{t_0 - 1}_2(\partial^\star D) = 1^{\partial^\star D}$, otherwise some unassigned site in $D$ would be $\star$-adjacent to a $0$ in either $\tau^{t_0 - 1}_1(\partial^\star D)$ or $\tau^{t_0 - 1}_2(\partial^\star D)$, and so rule (i) would be applied instead. We may now apply Lemma \ref{pseudoMRF} (combined with Remark \ref{notePseudoMRF}) to $\Theta$ and $\Lambda$ in order to see that $\psi^{(1)}_{p,q,S_n}(\theta_1(\Theta) \vert \tau^{t_0 - 1}_1)$ and $\psi^{(1)}_{p,q,S_{y,z}}(\theta_2(\Theta) \vert \tau^{t_0 - 1}_2)$ are identical. This means that the optimal coupling according to which $\tau^{t_0}_1(x^{t_0})$ and $\tau^{t_0}_2(x^{t_0})$ are assigned is supported on the diagonal, and so $\tau^{t_0}_1(x^{t_0}) = \tau^{t_0}_2(x^{t_0})$, $\Prob_{n,y,z}$-almost surely. This will not change the conditions under which we applied Lemma \ref{pseudoMRF}, and so inductively, the same will be true for each site in $I$ as it is assigned, including $\veczero$. We have shown that $\theta_1(\veczero) = \theta_2(\veczero)$, $\Prob_{n,y,z}$-almost surely, regardless of when $\veczero$ is assigned in the site-by-site evolution of $\Prob_{n,y,z}$. This is a contradiction, and so our original assumption was incorrect, implying that the desired path $\Path$ exists.
\end{proof}

Given an arbitrary time $t$, let:
\begin{equation}
\begin{array}{ccc}
 \rho^t_1(\cdot) := \left.\psi^{(1)}_{p,q,S_n}(\cdot \vert \tau^{t-1}_i)\right|_{\{x^t\}} & \mbox{ and }	&	\rho^t_2(\cdot) := \left.\psi^{(1)}_{p,q,\Lambda}(\cdot \vert \tau^{t-1}_i)\right|_{\{x^t\}}
 \end{array}
 \end{equation}
be the two corresponding probability measures defined on the set $\{0,1\}^{\{x^t\}}$. Note that at any step within the site-by-site definition of $\Prob_{n,y,z}$, Lemma \ref{unlikely0} implies that $\frac{\lambda}{\lambda + q^3} \leq \rho^t_i(1)$, where $\lambda = \frac{p}{1-p}$ and $i = 1,2$. Now, w.l.o.g., suppose that $\rho^t_2(0) \geq \rho^t_1(0)$. Then, an optimal coupling $\mathbb{Q}^t$ of $\rho^t_1$ and $\rho^t_2$ will assign $\mathbb{Q}^t(\{(0,0)\}) = \rho^t_1(0)$, $\mathbb{Q}^t(\{(0,1)\}) = 0$, $\mathbb{Q}^t(\{(1,0)\}) = \rho^t_2(0)-\rho^t_1(0)$, and $\mathbb{Q}^t(\{(1,1)\}) = 1-\rho^t_2(0)$. Therefore,
\begin{equation}
\label{bound}
\mathbb{Q}^t(\{(1,1)\}^{\rm c}) = \rho^t_2(0) \leq \frac{q^3}{\lambda+q^3}.
\end{equation}

Next, define the map $h: \{0,1\}^{S_n} \times \{0,1\}^{S_n} \to \{0,1\}^{S_n}$ given by:
\begin{equation}
(h(\theta_1,\theta_2))(x) =
\begin{cases}
1	&	\mbox{if }	(\theta_1(x),\theta_2(x)) \neq (1,1),	\\
0	&	\mbox{if }	(\theta_1(x),\theta_2(x)) = (1,1).
\end{cases}
\end{equation}

By (\ref{bound}), $h_*\Prob_{n,y,z}$ (the push-forward measure) can be coupled against an i.i.d. measure on $\{0,1\}^{S_n}$ which assigns $1$ with probability $\frac{q^3}{\lambda+q^3}$ and $0$ with probability $\frac{\lambda}{\lambda+q^3}$, and that the former is stochastically dominated by the latter. This, together with Lemma \ref{path}, yields
\begin{align}
\left| \psi^{(1)}_{p,q,S_n}(\theta(\veczero) = 1) - \psi^{(1)}_{p,q,S_{y,z}}(\theta(\veczero) = 1)\right|	&	\leq \Prob_{n,y,z}(\theta_1(\veczero) \neq \theta_2(\veczero))	\\
																		&	\leq  \psi_{\frac{q^3}{\lambda+q^3},S_n}(\veczero \overset{\star}{\leftrightarrow} \partial S_n) \label{eqnDecay},
\end{align}

Since we have assumed $\lambda > q^3\left(\frac{p_{\rm c}}{1-p_{\rm c}}\right)$ and $p_{\rm c} + p_{\rm c}^\star = 1$, we have $\frac{q^3}{\lambda + q^3} < p_{\rm c}^\star$. It follows by \cite{2-aizenman,1-menshikov} that the expression in (\ref{eqnDecay}) decays exponentially in $n$. This completes the proof.

\bigskip
\noindent
\textbf{Part II: $\lambda < \lambda_1(q) := \frac{1}{q}\left(\frac{p_{\rm c}}{1-p_{\rm c}}\right)$.} Observe that, by virtue of Proposition \ref{SSM_subcritical}, it suffices to prove that $\pi^{\mathrm{WR}}_\lambda$ satisfies exponential SSM. For this, we use  Theorem \ref{vdBM-SSM}. By considering all cases of nearest-neighbour configurations at the origin, one can compute:
\begin{equation}
Q(\pi_\lambda^{\mathrm{WR}}) = \max_{\omega_1,\omega_2 \in \Omega(\mathcal{E})} d_{TV}(\pi^{\omega_1}_{\lambda,\{\veczero\}}, \pi^{\omega_2}_{\lambda,\{\veczero\}}) = \frac{q\lambda}{1+q\lambda}.
\end{equation}

By Theorem \ref{vdBM-SSM}, we obtain exponential SSM when:
\begin{equation}
\lambda < \frac{1}{q}\left(\frac{p_c}{1-p_c}\right) = \lambda_1(q).
\end{equation}

Uniqueness of Gibbs states in this same region was mentioned in \cite[p. 40]{3-georgii}, by appealing to \cite[Theorem 1]{1-berg} (which is the crux of Theorem \ref{vdBM-SSM}).
\end{proof}

\begin{remark}
In the case $q=2$, it is possible to give an alternative proof of Theorem \ref{widom-decay}, Part I, using the framework of the proof of Theorem \ref{potts-decay}, Part I. The arguments through (\ref{SnBn}) go through, with an appropriate re-definition of events and use of Proposition \ref{WR_FKG} for stochastic dominance. One can then apply Lemma \ref{unlikely0} to give estimates based on the site random-cluster model. (In contrast to Theorem \ref{potts-decay}, Part I, this does not require the use of planar duality). So far, this approach is limited to $q=2$ because we do not know appropriate versions of Proposition \ref{WR_FKG} for $q > 2$.
\end{remark} 

\subsection{Exponential convergence in the hard-core model}

Our argument again relies on proving exponential convergence for conditional measures with respect to certain ``extremal'' boundaries on $S_n$, but these now will consist of alternating $0$ and $1$ symbols rather than a single symbol (recall from Section \ref{hardcore_sec} that $\omega^{(o)}$ is defined as the configuration of $1$'s on all even sites and $0$ on all odd sites).

\begin{theorem}
\label{hard-decay}
For the $\mathbb{Z}^2$ hard-core model with activity $\gamma$, there exist two critical parameters $0 < \gamma_1 < \gamma_2$ such that for any $0 < \gamma < \gamma_1$ or $\gamma > \gamma_2$, there exist $C, \alpha > 0$ such that for every $y,z \geq \vecone n$,
\begin{equation}
\label{hard-bound}
\left|\pi^\gamma_n(\omega^{(o)}) - \pi^\gamma_{y,z}(\omega^{(o)})\right| \leq Ce^{-\alpha n}.
\end{equation}
\end{theorem}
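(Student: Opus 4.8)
The plan is to mirror the two-part structure of Theorems~\ref{potts-decay} and \ref{widom-decay}, handling the supercritical region $\gamma>\gamma_2$ (via a disagreement-percolation coupling together with a Peierls argument of Dobrushin) and the subcritical region $\gamma<\gamma_1$ (via exponential SSM) separately. Throughout, since $\veczero$ is an even site one has $\pi^\gamma_n(\omega^{(o)})=\pi^{\omega^{(o)}}_{\gamma,S_n}(\theta(\veczero)=1)$ and $\pi^\gamma_{y,z}(\omega^{(o)})=\pi^{\omega^{(o)}}_{\gamma,S_{y,z}}(\theta(\veczero)=1)$, and both measures carry the same alternating boundary condition $\omega^{(o)}$, in particular on their bottom boundaries, so that \eqref{hard-bound} is again a ``half-plane'' statement.

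\textbf{Part I} ($\gamma>\gamma_2$). As in Part I of Theorem~\ref{widom-decay}, I would build a site-by-site coupling $\Prob_{n,y,z}$ of $\pi^{\omega^{(o)}}_{\gamma,S_n}$ and $\pi^{\omega^{(o)}}_{\gamma,S_{y,z}}$ of the type used in \cite{1-berg}: extend both configurations to $\overline{S_{y,z}}$ by $\omega^{(o)}$ on the relevant boundary pieces, reveal the annulus $\overline{S_{y,z}}\setminus S_n$ in the second coordinate only (freezing the first to $\omega^{(o)}$ there), and then fill in $S_n$ one site at a time, always choosing the next site $\star$-adjacent to one where the two partial configurations already disagree if possible, and assigning its pair of values according to an optimal coupling of the two single-site hard-core conditionals. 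Because the hard-core model is a genuine MRF, the role played by Lemma~\ref{pseudoMRF} in the Widom--Rowlinson argument is here taken by the plain Markov property: once the two partial configurations agree on the $\star$-boundary of the (still undetermined) $\star$-connected region around $\veczero$ -- hence in particular on its ordinary boundary -- the conditional laws of the two configurations on that region coincide. This yields the analogue of Lemma~\ref{path}: $\Prob_{n,y,z}$-a.s., $\theta_1(\veczero)\neq\theta_2(\veczero)$ forces a $\star$-path of disagreement sites from $\veczero$ to $\partial S_n$ (absent such a path, an ordinary circuit of agreement surrounds $\veczero$, and the Markov property forces the two conditional laws in its interior to coincide). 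Consequently
\begin{equation}
\bigl|\pi^\gamma_n(\omega^{(o)})-\pi^\gamma_{y,z}(\omega^{(o)})\bigr|\;\le\;\Prob_{n,y,z}\bigl(\text{some }\star\text{-path of disagreement sites joins }\veczero\text{ to }\partial S_n\bigr).
\end{equation}

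It remains to show the right-hand side decays exponentially in $n$, uniformly in $y,z\ge\vecone n$, and this is where the Peierls argument of Dobrushin \cite{1-dobrushin} enters; I expect this to be the main obstacle. Every disagreement site $x$ has $\theta_1(x)\neq\theta_2(x)$, so at least one of $\theta_1,\theta_2$ deviates from the odd pattern $\omega^{(o)}$ at $x$; hence a disagreement $\star$-path from $\veczero$ to $\partial S_n$ lies in $D_1\cup D_2$ with $D_i:=\{x\in S_n:\theta_i(x)\neq\omega^{(o)}(x)\}$. Under the odd boundary condition the deviation set of a hard-core configuration decomposes, in Dobrushin's sense, into contours whose weight is at most $\varepsilon(\gamma)^L$ for a contour of length $L$, with $\varepsilon(\gamma)\to0$ as $\gamma\to\infty$, while the number of contours of given length through a fixed site grows only exponentially. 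The technical core is to convert the existence of a long disagreement cluster into that of a long contour carried by $\theta_1$ or by $\theta_2$ (handling the fact that $D_1\cup D_2$ is a union of two contour systems, and the hard-wall effect of the bottom boundary), and then to run the Peierls sum over contours enclosing $\veczero$ of diameter $\gtrsim n$; $\gamma_2$ is precisely the value above which this sum -- essentially geometric in $n$ -- converges, i.e.\ above which the per-length contour count is beaten by $\varepsilon(\gamma)$. The coupling and the disagreement-path lemma are routine adaptations of the Widom--Rowlinson argument; the contour bookkeeping is the delicate part.

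\textbf{Part II} ($\gamma<\gamma_1$). Here it suffices, by Proposition~\ref{SSM_subcritical}, to show that $\pi^{\mathrm{HC}}_\gamma$ satisfies exponential SSM; feeding this (with parameters $C,\alpha$) into Proposition~\ref{SSM_subcritical} with $\omega_1=\omega_2=\omega^{(o)}$ and the event $\{\theta(\veczero)=1\}$ (note $\omega^{(o)}(\veczero)=1$) gives \eqref{hard-bound} at once. Inspecting all nearest-neighbour configurations at the origin one computes $Q(\pi^{\mathrm{HC}}_\gamma)=\frac{\gamma}{1+\gamma}$ (the single-site conditional is $\mathrm{Bernoulli}(\frac{\gamma}{1+\gamma})$ when all neighbours are empty and $\delta_0$ otherwise), so Theorem~\ref{vdBM-SSM} already yields exponential SSM for $\gamma<\gamma_1:=\frac{p_{\rm c}}{1-p_{\rm c}}$; a larger admissible $\gamma_1$ can be obtained by invoking instead the recent strong spatial mixing results for the hard-core model on $\Z^2$. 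In either case $\gamma_1<\gamma_2$, as required.
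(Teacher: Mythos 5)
Your Part II is correct and essentially self-contained: computing $Q(\pi^{\mathrm{HC}}_\gamma)=\frac{\gamma}{1+\gamma}$ and invoking Theorem \ref{vdBM-SSM} plus Proposition \ref{SSM_subcritical} does yield \eqref{hard-bound} for $\gamma<\frac{p_{\rm c}}{1-p_{\rm c}}\approx 1.45$. The paper instead cites a stronger SSM result to get $\gamma_1=2.48$, but since the theorem only asserts the existence of some $\gamma_1>0$, your smaller constant is perfectly adequate.

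Part I, however, has a genuine gap: the Peierls argument is the entire content of the supercritical case, and you have deferred it rather than carried it out. Two concrete problems. First, your reduction to $D_1\cup D_2$ starts from the wrong structural fact. Along an \emph{ordinary} (nearest-neighbour) path of disagreement, the hard-core constraint forces the two configurations to alternate in lockstep: if $\theta_1(x)=1$ at a disagreement site then $\theta_1=0$ and hence $\theta_2=1$ at every disagreement neighbour, so exactly \emph{one} of $\theta_1,\theta_2$ is out of phase with $\omega^{(o)}$ on the \emph{whole} path. This collapses the disagreement event into a union of two single-measure events $\pi_{\gamma,S_n}^{\omega^{(o)}}(\mathcal{T}_n)+\pi_{\gamma,S_{y,z}}^{\omega^{(o)}}(\mathcal{T}_n)$, where $\mathcal{T}_n$ is the event of an out-of-phase path from $\veczero$ to $\tBdry S_n$, and there is no ``union of two contour systems'' to disentangle. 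Your choice of $\star$-paths destroys exactly this alternation (diagonal neighbours have equal parity and carry no hard-core constraint), so the difficulty you flag as ``the technical core'' is one you have introduced. Second, the contour estimate itself is missing: one must take the connected component $\Sigma_{\veczero}(\theta)$ of the deviation set containing the origin, pass to its inner external boundary $\Gamma(\theta)$, which is a $\star$-connected set of \emph{even} sites all carrying $0$'s, and exhibit an injective map on $\{\Gamma(\theta)=\Circ^\star\}$ that shifts the interior configuration by one lattice unit and inserts at least $|\Circ^\star|/4$ new particles, giving weight at most $\gamma^{-|\Circ^\star|/4}$; combined with the site-animal growth constant $\delta\le 4.6496$ for the entropy of such contours, this is what produces exponential decay for $\gamma>\delta^4$ and hence the explicit $\gamma_2=\gammatwo$. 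Without this construction no value of $\gamma_2$ and no exponential bound is actually established, so as written the supercritical half of the theorem remains unproved.
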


\begin{proof}

As in the previous two theorems, we consider two cases.

\bigskip
\noindent
\textbf{Part I: $\gamma > \gamma_2 := \gammatwo$.} Our proof essentially combines the disagreement percolation techniques of \cite{1-berg} and the proof of non-uniqueness of equilibrium state for the hard-core model due to Dobrushin (see \cite{1-dobrushin}). We need enough details not technically contained in either proof that we present a mostly self-contained argument here. From \cite[Theorem 1]{1-berg} and an averaging argument (as in the proof of Proposition \ref{SSM_subcritical}) on $\tBdry S_n$ induced by a boundary condition on $S_{y,z}$, we know that for any $y,z \geq \vecone n$,
\begin{equation}
\left| \pi^{\gamma}_{n}(\omega^{(o)}) - \pi^{\gamma}_{y,z}(\omega^{(o)}) \right| \leq \Prob_{n,y,z} \left(\exists \textrm{ a path of disagreement from } \veczero \leftrightarrow \tBdry S_n \right)
\end{equation}
for a certain coupling $\Prob_{n,y,z}$ of $\pi_{\gamma,S_n}^{\omega^{(o)}}$ and $\left.\pi_{\gamma,S_{y,z}}^{\omega^{(o)}}\right|_{S_n}$. We do not need the structure of $\Prob_{n,y,z}$ here, but instead note the following: a path of disagreement for the boundaries $\omega^{(o)}(\partial S_n)$ and $\omega^{(o)}(\partial S_{y,z})$ implies that in one of the configurations, all entries on the path will be ``out of phase'' with respect to $\omega^{(o)}$, i.e. that all entries along the path will have $1$ at every odd site and $0$ at every even site rather than the opposite alternating pattern of $\omega^{(o)}$. Then, if we denote by $\mathcal{T}_n$ the event that there is a path $\Path$ from $\veczero \leftrightarrow \tBdry S_n$ with $1$ at every odd site and $0$ at every even site, it is clear that:
\begin{equation}
\Prob_{n,y,z} \left(\exists \textrm{ a path of disagreement from } \veczero \leftrightarrow \tBdry S_n \right) \leq \pi_{\gamma,S_n}^{\omega^{(o)}}(\mathcal{T}_n) + \pi_{\gamma,S_{y,z}}^{\omega^{(o)}}(\mathcal{T}_n).
\end{equation}

Since $y,z \geq \vecone n$ are arbitrary (in particular, $y$ and $z$ can be chosen to be $\vecone n$), it suffices to prove that $\sup_{y,z \geq \vecone n}\pi_{\gamma,S_{y,z}}^{\omega^{(o)}}(\mathcal{T}_n)$ decays exponentially with $n$. Define the set:
\begin{equation}
\Theta_{y,z} = \{\theta \in \{0,1\}^{\overline{S}^\star_{y,z}}: \theta \mathrm{~is~feasible~and~}\theta(\partial^\star S_{y,z}) = \omega^{(o)}(\partial^\star S_{y,z})\}.
\end{equation} 

For any $\theta \in \Theta_{y,z}$, we define $\Sigma_{\veczero}(\theta)$ to be the connected component of $\Sigma_{S_{y,z}}(\theta,\omega^{(o)})$ ($= \{x \in S_{y,z}: \theta(x) \neq \omega^{(o)}(x)\}$) containing the origin $\veczero$. Since $\mathcal{T}_n \subseteq \{\Sigma_{\veczero}(\theta) \cap \tBdry S_n \neq \emptyset\}$, our proof will then be complete if we can show that there exist $C, \alpha > 0$ so that for any $n$ and $y,z \geq \vecone n$, the following holds:
\begin{equation}
\label{outofphase}
\pi_{\gamma,S_{y,z}}^{\omega^{(o)}}(\Sigma_{\veczero}(\theta) \cap \tBdry S_n \neq \emptyset) \leq Ce^{-\alpha n}.
\end{equation}

To prove this, we use a Peierls argument, similar to \cite{1-dobrushin}. 

Fix any $y,z \geq \vecone n$ and for any $\theta \in \Theta_{y,z}$, define $\Sigma_{\veczero}(\theta)$ as above, and let $K(\theta)$ to be the connected component of $\{x \in \overline{S}^\star_{y,z}: \theta(x) = \omega^{(o)}(x)\}$ containing $\partial^{\star} S_{y,z}$. Clearly, $\Sigma_{\veczero}(\theta)$ and $K(\theta)$ are disjoint, $K(\theta) \neq \emptyset$ and, provided $\theta(\veczero) = 0$, $\Sigma_{\veczero}(\theta) \neq \emptyset$. Then, define $\Gamma(\theta) :=  \Sigma_{\veczero}(\theta) \cap \partial K(\theta) \subseteq S_{y,z}$. We note that for any $\theta \in \Theta_{y,z}$ with $\theta(\veczero) = 0$, we have that $\theta(\Gamma(\theta)) = 0^{\Gamma(\theta)}$, since adjacent sites in $ \Sigma_{\veczero}(\theta)$ and $K(\theta)$ must have the same letter by definition of $\Sigma_{\veczero}(\theta)$, and adjacent $1$ symbols are forbidden in the hard-core model. Therefore, every $x \in \Gamma(\theta)$ is even.

We need the concept of \emph{inner external boundary} for a connected set $\Sigma \Subset \Z^2$. The inner external boundary of $\Sigma$ is defined to be the inner boundary of the simply lattice-connected set consisting of the union of $\Sigma$ and the union of all the finite components of $\Z^2 \setminus \Sigma$. Intuitively, the inner external boundary of $\Sigma$ is the inner boundary of the set $\Sigma$ obtained after ``filling in the holes'' of $\Sigma$. Notice that the set $\Gamma(\theta)$ corresponds exactly to the inner external boundary of $\Sigma_{\veczero}(\theta)$. In addition, by \cite[Lemma 2.1 (i)]{1-deuschel}, we know that the inner external boundary of a finite connected set (more generally a finite $\star$-connected set) is $\star$-connected. Thus, $\Gamma(\theta) \subseteq S_{y,z}$ is a $\star$-connected set $\Circ^\star$ that consists only of even sites and contains the origin $\veczero$, for any $\theta \in \Theta_{y,z}$ with $\theta(\veczero) = 0$.

Then, for $\Circ^\star \subseteq S_{y,z}$, we define the event $E_{\Circ^\star} := \{\theta \in \Theta_{y,z}: \Gamma(\theta) = \Circ^\star\}$, and will bound from above $\pi_{\gamma,S_{y,z}}^{\omega^{(o)}}(E_{\Circ^\star})$, for every $\Circ^\star$ such that $E_{\Circ^\star}$ is nonempty. We make some more notation: for every such a set $\Circ^\star$, define $O(\Circ^\star)$ (for `outside') as the connected component of $(\Circ^\star)^c$ containing $\partial^\star S_{y,z}$, and define $I(\Circ^\star)$ (for `inside') as $S_{y,z} \setminus (\Circ^\star \cup O(\Circ^\star))$. Then $\Circ^\star$, $I(\Circ^\star)$, and $O(\Circ^\star)$ form a partition of $\overline{S}^\star_{y,z}$. We note that there cannot be a pair of adjacent sites from $I(\Circ^\star)$ and $O(\Circ^\star)$ respectively, since they would then be in the same connected component of $(\Circ^\star)^c$. We also note that for every $\theta \in E_{\Circ^\star}$, $\Circ^\star \subseteq  \Sigma_{\veczero}(\theta) \subseteq \Circ^\star \cup I(\Circ^\star)$ and $K(\theta) \subseteq O(\Circ^\star)$ though the sets need not be equal, since $ \Sigma_{\veczero}(\theta)$ or $K(\theta)$ could contain ``holes'' which are ``filled in'' in $I(\Circ^\star)$ and $O(\Circ^\star)$, respectively.

\begin{figure}[ht]
\centering
\includegraphics[scale = 0.5]{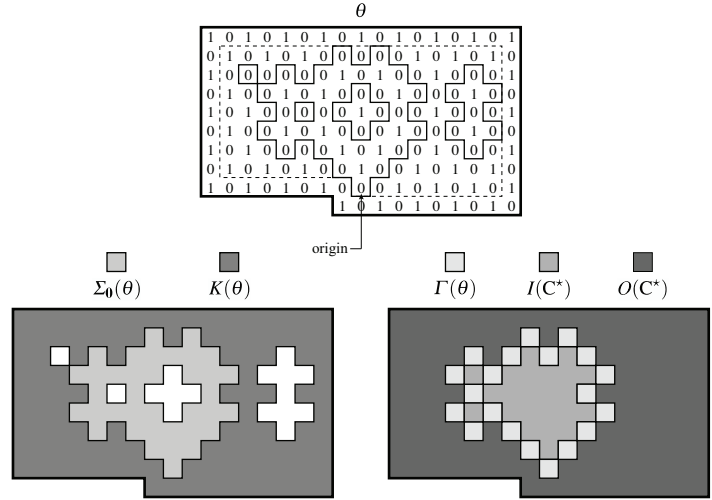}
\caption{A configuration $\theta \in E_{\Circ^\star}$. On the left, the associated sets $\Sigma_{\veczero}(\theta)$ and $K(\theta)$. On the right, the sets $I(\Circ^\star)$ and $O(\Circ^\star)$ for $\Gamma(\theta) = \Circ^\star$.}
\label{hard-core-pic}
\end{figure} 

Choose any set $\Circ^\star$ such that $E_{\Circ^\star} \neq \emptyset$. For each $\theta \in E_{\Circ^\star}$ and $x \in {\Circ^\star}$, using the definition of ${\Circ^\star}$ and the fact that $K(\theta) \subseteq O(\Circ^\star)$, there exists $x_0 \in \{e_1, -e_1, e_2, -e_2\}$ for which $x - x_0 \in O(\Circ^\star)$. Fix an $x_0$ which is associated to at least $|{\Circ^\star}|/4$ of the sites in ${\Circ^\star}$ in this way. Then, we define a function $s: E_{\Circ^\star} \to \{0,1\}^{\overline{S}^\star_{y,z}}$ that, given $\theta \in E_{\Circ^\star}$, defines a new configuration $s(\theta)$ as follows:
\begin{equation}
(s(\theta))(x) = 
\begin{cases}
\theta(x-x_0)	&	\mbox{if $x \in I(\Circ^\star)$},							\\
\theta(x)		&	\mbox{if $x \in O(\Circ^\star)$},							\\
1			&	\mbox{if $x \in {\Circ^\star}$ and $x - x_0 \in O(\Circ^\star)$},	\\
0			&	\mbox{if $x \in {\Circ^\star}$ and $x - x_0 \in I(\Circ^\star)$}.	\\

\end{cases}
\end{equation}

Informally, we move all $1$ symbols inside $I(\Circ^\star)$ in the $x_0$-direction by $1$ unit (even if those symbols were not part of $ \Sigma_{\veczero}(\theta)$), add new $1$ symbols at some sites in ${\Circ^\star}$, and leave everything in $O(\Circ^\star)$ unchanged.

It should be clear that $s(\theta)$ has at least $|{\Circ^\star}|/4$ more $1$ symbols than $\theta$ did. We make the following two claims: $s$ is injective on $E_{\Circ^\star}$, and for every $\theta \in E_{\Circ^\star}$, $s(\theta) \in \Theta_{y,z}$. If these claims are true, then clearly $\pi_{\gamma,S_{y,z}}^{\omega^{(o)}}(s(E_{\Circ^\star})) \geq \gamma^{|{\Circ^\star}|/4} \pi_{\gamma,S_{y,z}}^{\omega^{(o)}}(E_{\Circ^\star})$, implying that:
\begin{equation}
\label{peierlsbound}
\pi_{\gamma,S_{y,z}}^{\omega^{(o)}}(E_{\Circ^\star}) \leq \gamma^{-|{\Circ^\star}|/4}.
\end{equation}

Firstly, we show that $s$ is injective. Suppose that $\theta_1 \neq \theta_2$, for $\theta_1,\theta_2 \in E_{\Circ^\star}$. Then there is a site $x$ at which $\theta_1(x) \neq \theta_2(x)$. If $x \in O(\Circ^\star)$, then $(s(\theta_1))(x) = \theta_1(x) \neq \theta_2(x) = (s(\theta_2))(x)$ and so $s(\theta_1) \neq s(\theta_2)$. If $x \in I(\Circ^\star)$, then $(s(\theta_1))(x + x_0) = \theta_1(x) \neq \theta_2(x) = (s(\theta_2))(x + x_0)$, and again $s(\theta_1) \neq s(\theta_2)$. Finally, we note that $x$ cannot be in ${\Circ^\star}$, since at all sites in ${\Circ^\star}$, both $\theta_1$ and $\theta_2$ must have $0$ symbols. 

Secondly, we show that for any $\theta \in E_{\Circ^\star}$, $s(\theta)$ is feasible. All that must be shown is that $s(\theta)$ does not contain adjacent $1$ symbols. We break $1$ symbols in $s(\theta)$ into three categories:
\begin{enumerate}
\item \emph{shifted}, meaning that the $1$ symbol came from shifting a $1$ symbol at a site in $I(\Circ^\star)$ in the $x_0$-direction,
\item \emph{new}, meaning that the $1$ symbol was placed at a site $x \in {\Circ^\star}$ such that $x - x_0 \in O(\Circ^\star)$, or
\item \emph{untouched}, meaning that the $1$ symbol was at a site in $O(\Circ^\star)$ ($\supseteq \partial^\star S_{y,z}$).
\end{enumerate}

Note that untouched $1$ symbols cannot be adjacent to ${\Circ^\star}$: $\theta$ contains all $0$ symbols on ${\Circ^\star}$, and so since ${\Circ^\star} \subseteq  \Sigma_{\veczero}(\theta)$, a $1$ symbol adjacent to a symbol in ${\Circ^\star}$ would be in $ \Sigma_{\veczero}(\theta)$ as well, a contradiction since $ \Sigma_{\veczero}(\theta) \subseteq {\Circ^\star} \cup I(\Circ^\star)$, and so $ \Sigma_{\veczero}(\theta)$ and $O(\Circ^\star)$ are disjoint.

Clearly shifted $1$ symbols cannot be adjacent to each other, since there were no adjacent $1$ symbols in $\theta$. All new $1$'s were placed at sites in ${\Circ^\star}$, and all sites in ${\Circ^\star}$ are even, so new $1$ symbols can't be adjacent to each other. Untouched $1$'s can't be adjacent for the same reason as shifted $1$'s. We now address the possibility of adjacent $1$ symbols in $s(\theta)$ from different categories. A shifted or new $1$ in $s(\theta)$ is at a site in ${\Circ^\star} \cup I(\Circ^\star)$, and an untouched $1$ can't be adjacent to a site in ${\Circ^\star}$ as explained above, and also cannot be adjacent to a site in $I(\Circ^\star)$ since $I(\Circ^\star)$ and $O(\Circ^\star)$ do not contain adjacent sites. Therefore, shifted or new $1$'s can't be adjacent to untouched $1$'s. The only remaining case which we need to rule out is a new $1$ adjacent to a shifted $1$. Suppose that $(s(\theta))(x)$ is a new $1$ and $(s(\theta))(x')$ is a shifted $1$. Then by definition, $x' - x_0 \in I(\Circ^\star)$ and $x - x_0 \in O(\Circ^\star)$. We know that $I(\Circ^\star)$ and $O(\Circ^\star)$ do not contain adjacent sites, so $x - x_0$ and $x' - x_0$ are not adjacent, implying that $x$ and $x'$ are not adjacent. We've then shown that $s(\theta)$ is feasible and then, since $\partial^\star S_{y,z} \subseteq  O(\Circ^\star)$, $s(\theta) \in \Theta_{y,z}$, completing the proof of (\ref{peierlsbound}). 

Recall that every set $\Circ^\star$ which we are considering is $\star$-connected, occupies only even sites, and contains the origin $\veczero$. Then, given $k \in \N$, it is direct to see that the number of such $\Circ^\star$ with $|\Circ^\star| = k$ is less than or equal to $k \cdot t(k)$, where $t(k)$ denotes the number of \emph{site animals} (see \cite{1-klarner} for the definition) of size $k$ (the first $k$ factor comes from the fact that site animals are defined up to translation, and here given a site animal of size $k$, exactly $k$ translations of it will contain the origin $\veczero$). We know that for every $\epsilon > 0$ there exists $C_\epsilon > 0$ such that $t(k) \leq C_\epsilon(\delta + \epsilon)^k$ for every $k$, where $\delta := \lim_{k \to \infty} \left(t(k)\right)^{1/k} \leq 4.649551$ (see \cite{1-klarner}). 

If $\Sigma_{\veczero}(\theta) \cap \tBdry S_n \neq \emptyset$, then $\Sigma_{\veczero}(\theta)$ has to intersect the left, top or right boundary of $S_n$. W.l.o.g., we may assume that $\Sigma_{\veczero}(\theta)$ intersects the right boundary of $S_n$. Then, every vertical segment in the right half of $S_n$ must intersect $\Sigma_{\veczero}(\theta)$ and, therefore, at least one element of its inner external boundary, namely $\Gamma(\theta)$. Then:
\begin{equation}
\Sigma_{\veczero}(\theta) \cap \tBdry S_n \neq \emptyset \implies |\Gamma(\theta)| \geq n.
\end{equation}

Therefore, taking an arbitrary $\epsilon > 0$, we may bound $\pi_{\gamma,S_{y,z}}^{\omega^{(o)}}(\Sigma_{\veczero}(\theta) \cap \tBdry S_n \neq \emptyset)$ from above:
\begin{equation}
\pi_{\gamma,S_{y,z}}^{\omega^{(o)}}(\Sigma_{\veczero}(\theta) \cap \tBdry S_n \neq \emptyset) \leq \sum_{\Circ^\star: |\Circ^\star| \geq n} \gamma^{-|{\Circ^\star}|/4} \leq \sum_{k = n}^{\infty} kC_\epsilon(\delta+\epsilon)^{k} \cdot \gamma^{-k/4},
\end{equation}
which decays exponentially in $n$ as long as $\gamma > (\delta+\epsilon)^4$, independently of $y$ and $z$. Since $\epsilon$ was arbitrary, $\gamma > \gammatwo > \delta^4$ suffices for justifying (\ref{outofphase}), completing the proof.

\bigskip
\noindent
\textbf{Part II: $\gamma < \gamma_1 := 2.48$.} It is known (see \cite{1-vera}) that when $d = 2$ and $\gamma < 2.48$, $\pi^{\mathrm{HC}}_\gamma$ satisfies exponential SSM. Then, by applying Proposition \ref{SSM_subcritical}, we conclude.
\end{proof}

%%%%%%%%%%%%%%%%%%%%%%%%%%%%%%%%%%%%%%%%%%%%%%%%%%

\section{Poly-time approximation for pressure of $\Z^2$ lattice models}
\label{section9}

By a \emph{poly-time approximation algorithm} to compute a number $r$, we mean an algorithm that, given $N \in \N$, produces an estimate $r_N$ such that $\left|r - r_N\right| < \frac{1}{N}$ and the time to compute $r_N$ is polynomial in $N$.

\begin{theorem}
\label{press-aprox}
Let $\Phi$ be a n.n. interaction for a set of restrictions $\mathcal{E}$ and suppose that $\Omega(\mathcal{E})$ satisfies the square block D-condition. Let $\overline{\omega} \in \Omega(\mathcal{E})$ be a periodic point such that $c_\pi(\nu^{\overline{\omega}}) > 0$. In addition, suppose that there exists $C,\alpha > 0$ such that, for every $y,z \geq \vecone n$:
\begin{equation}
\label{exp-decay}
\left| \pi_{n}(\omega) - \pi_{y,z}(\omega) \right| \leq Ce^{-{\alpha}n} \mbox{ over } \omega \in \mathrm{O}(\overline{\omega}).
\end{equation}

Then:
\begin{equation}
\Press(\Phi) = \frac{1}{\left|\mathrm{O}(\overline{\omega})\right|}\sum_{\omega \in \mathrm{O}(\overline{\omega})}{\hat{I}_{\pi}(\omega) + A_\Phi(\omega)},
\end{equation}
and there is a poly-time approximation algorithm to compute $\Press(\Phi)$, when $d = 2$.
\end{theorem}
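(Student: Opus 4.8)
The plan is to read off the pressure formula from Theorem~\ref{press-rep} with $\nu := \nu^{\overline{\omega}}$, and then to make that formula effective by approximating each of the finitely many numbers $\hat{I}_\pi(\omega)$, $\omega \in \mathrm{O}(\overline{\omega})$, through a transfer-matrix computation on the half-box $S_n$ with $n$ only of order $\log N$.

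First I would verify the hypotheses of Theorem~\ref{press-rep} for $\nu = \nu^{\overline{\omega}}$. This measure is shift-invariant with $\supp(\nu) = \mathrm{O}(\overline{\omega}) \subseteq \Omega(\mathcal{E})$ (since $\Omega(\mathcal{E})$ is shift-invariant and $\overline{\omega} \in \Omega(\mathcal{E})$), and $c_\pi(\nu^{\overline{\omega}}) > 0$ by hypothesis. For the uniform-convergence condition~(\ref{star}): applying~(\ref{exp-decay}) with $y = z = \vecone m$ for $m \geq n$ gives $|\pi_n(\omega) - \pi_m(\omega)| \leq Ce^{-\alpha n}$, so $(\pi_n(\omega))_n$ is Cauchy; its limit is $\hat{\pi}(\omega)$, and letting $m \to \infty$ yields $|\pi_n(\omega) - \hat{\pi}(\omega)| \leq Ce^{-\alpha n}$. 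Hence for any $y, z \geq \vecone n$ the triangle inequality and~(\ref{exp-decay}) give $|\pi_{y,z}(\omega) - \hat{\pi}(\omega)| \leq 2Ce^{-\alpha n}$, a bound independent of $\omega$, so~(\ref{star}) holds uniformly over $\mathrm{O}(\overline{\omega})$. Since $\pi_n(\omega) \geq c_\pi(\nu^{\overline{\omega}}) > 0$ for all $n$, also $\hat{\pi}(\omega) \geq c_\pi(\nu^{\overline{\omega}}) > 0$, so $\hat{I}_\pi(\omega) = -\log \hat{\pi}(\omega)$ is finite on $\mathrm{O}(\overline{\omega})$. Theorem~\ref{press-rep} then gives $\Press(\Phi) = \int (\hat{I}_\pi + A_\Phi)\,d\nu^{\overline{\omega}} = \frac{1}{|\mathrm{O}(\overline{\omega})|} \sum_{\omega \in \mathrm{O}(\overline{\omega})} (\hat{I}_\pi(\omega) + A_\Phi(\omega))$, the asserted identity.

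For the algorithm (with $d = 2$), fix $N$ and write $c := c_\pi(\nu^{\overline{\omega}})$ and $k_0 := |\mathrm{O}(\overline{\omega})|$, both constants. Each $A_\Phi(\omega) = -\Phi(\omega(\veczero)) - \sum_{i=1}^{2}\Phi(\omega(\{\veczero, e_i\}))$ is a fixed combination of the finitely many values of $\Phi$, hence computable to within $\frac{1}{3N}$ in time polynomial in $N$. For $\hat{I}_\pi(\omega)$, the bound $|\pi_n(\omega) - \hat{\pi}(\omega)| \leq Ce^{-\alpha n}$ together with $\pi_n(\omega), \hat{\pi}(\omega) \geq c$ and the mean value theorem gives $|{-\log\pi_n(\omega)} - \hat{I}_\pi(\omega)| \leq c^{-1}Ce^{-\alpha n}$, which is $< \frac{1}{3N}$ once $n \geq \frac{1}{\alpha}\log(3CN/c)$, i.e.\ $n = O(\log N)$. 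It then remains to compute $\pi_n(\omega) = \pi^{\omega(\partial S_n)}_{S_n}(\theta(\veczero) = \omega(\veczero))$, a ratio of two partition functions over feasible configurations on $S_n$ with the boundary $\omega(\partial S_n)$ fixed. In dimension $2$, $S_n$ decomposes into $O(n)$ columns, each of height $O(n)$; sweeping column by column with the fixed boundary absorbed as an external field, each partition function is a product of $O(n)$ transfer matrices of size $|\Symb|^{O(n)} \times |\Symb|^{O(n)}$, so $\pi_n(\omega)$ is computed (to any $\frac{1}{\mathrm{poly}(N)}$ precision, carrying $\mathrm{poly}(N)$ bits, so that the induced error in $-\log\pi_n(\omega)$ is $< \frac{1}{3N}$) in time $|\Symb|^{O(n)}\mathrm{poly}(n)$. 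Since $n = O(\log N)$, this is $|\Symb|^{O(\log N)}\mathrm{polylog}(N) = \mathrm{poly}(N)$. Averaging the $k_0$ contributions, the output lies within $\frac{1}{N}$ of $\Press(\Phi)$ and is produced in time polynomial in $N$.

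The main obstacle is this last computation. It works only because two things combine: the \emph{exponential} rate in~(\ref{exp-decay}) lets $n$ be as small as $O(\log N)$, and in dimension $d = 2$ the half-box $S_n$ has $(d-1)$-dimensional cross-sections of size $O(n)$, so the transfer matrices have dimension $|\Symb|^{O(n)} = |\Symb|^{O(\log N)}$, still polynomial in $N$. For $d \geq 3$ the cross-sections have size $O(n^{d-1})$, the transfer matrices have dimension $|\Symb|^{O((\log N)^{d-1})}$, and this scheme yields only a sub-exponential, non-polynomial, approximation — which is precisely why the poly-time claim is restricted to $d = 2$.
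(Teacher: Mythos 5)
Your proposal is correct and follows essentially the same route as the paper: verify the hypotheses of Theorem~\ref{press-rep} for $\nu^{\overline{\omega}}$ (deducing uniform convergence of $\pi_{y,z}$ to $\hat\pi$ from the exponential bound), read off the finite-sum formula, and then combine the $O(\log N)$ truncation depth afforded by exponential convergence with an exponential-in-$n$ transfer-matrix computation of $\pi_n(\omega)$ on $S_n$ to get a poly-time algorithm in $d=2$. Your treatment is in fact slightly more careful than the paper's at one point — you explicitly pass from the bound on $|\pi_n(\omega)-\hat\pi(\omega)|$ to a bound on $|{-\log\pi_n(\omega)}-\hat I_\pi(\omega)|$ via the lower bound $c_\pi(\nu^{\overline{\omega}})>0$ and the mean value theorem, a step the paper leaves implicit.
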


\begin{proof}
Notice that $\supp(\nu^{\overline{\omega}}) = \mathrm{O}(\overline{\omega}) \subseteq \Omega(\mathcal{E})$, since $\Omega(\mathcal{E})$ is shift-invariant and $\overline{\omega} \in \Omega(\mathcal{E})$. Now, since $\left| \pi_{n}(\omega) - \pi_{y,z}(\omega) \right| \leq Ce^{-{\alpha}n} \mbox{ over } \omega \in \supp(\nu^{\overline{\omega}})$, we can easily conclude that $\lim_{y,z \rightarrow \infty} \pi_{y,z}(\omega) = \hat{\pi}(\omega)$ uniformly over $\omega \in\supp(\nu^{\overline{\omega}})$. This, combined with $\Omega(\mathcal{E})$ satisfying the square block D-condition and $\mathrm{c}_\pi(\nu^{\overline{\omega}}) > 0$, gives us
\begin{equation}
\Press(\Phi) = \int{\left(\hat{I}_{\pi} + A_\Phi\right)}d\nu^{\overline{\omega}} = \frac{1}{|\supp(\nu^{\overline{\omega}})|}\sum_{\omega \in \supp(\nu^{\overline{\omega}})}{\hat{I}_{\pi}(\omega) + A_\Phi(\omega)},
\end{equation}
thanks to Theorem \ref{press-rep}.

For the algorithm, it suffices to show that there is a poly-time algorithm to compute $\hat{\pi}(\omega)$, for any $\omega \in \mathrm{O}(\overline{\omega})$.

By Equation \ref{exp-decay}, there exist $C, \alpha > 0$ such that $\left|\pi_n(\omega) - \hat{\pi}(\omega)\right| < Ce^{-\alpha n}$. Since $|\partial S_n|$ is linear in $n$ when $d = 2$, by a modified transfer matrix approach (see \cite[Lemma 4.8]{2-marcus}), we can compute $\pi_n(\omega)$ in exponential time $Ke^{\rho n}$ for some $K, \rho > 0$. Combining the exponential time to compute $\pi_n(\omega)$ for the exponential decay of $\left|\pi_n(\omega) - \hat{\pi}(\omega)\right|$, we get a poly-time algorithm to compute $\Press(\Phi)$: namely, given $N \in \N$, let $n$ be the smallest integer such that $Ce^{-\alpha (n+1)} < \frac{1}{N}$. Then $\pi_{n+1}(\omega)$ is within $\frac{1}{N}$ of $\hat{\pi}(\omega)$ and since $\frac{1}{N} \leq Ce^{-\alpha n}$, the time to compute $\pi_{n+1}(\omega)$ is at most:
\begin{equation}
Ke^{\rho (n+1)} = (Ke^\rho C^{\rho/\alpha})\frac{1}{(Ce^{-\alpha n})^{\rho/\alpha}} \le (Ke^\rho C^{\rho/\alpha})N^{\rho/\alpha},
\end{equation}
which is a polynomial in $N$.
\end{proof}

\begin{corollary}
\label{potts-wr-hard}
The following holds:
\begin{enumerate}
\item For the $\Z^2$ Potts model with $q$ types and inverse temperature $\beta > 0$:
\begin{equation}
\label{pres_Potts}
\Press(\Phi_\beta) = \hat{I}^\beta_\pi(\omega_q) + 2\beta.
\end{equation}

\item For the $\Z^2$ Widom-Rowlinson model with $q$ types and activity $\lambda \in (0,\lambda_1(q)) \cup (\lambda_2(q),\infty)$:
\begin{equation}
\label{pres_WR}
\Press(\Phi_{\lambda}) = \hat{I}^\lambda_\pi(\omega_q) + \log \lambda,
\end{equation}
where $\lambda_1(q) := \frac{1}{q}\left(\frac{p_c}{1-p_c}\right)$ and $\lambda_2(q) := q^3\left(\frac{p_c}{1-p_c}\right)$.

\item For the $\Z^2$ hard-core model with activity $\gamma \in (0,\gamma_1) \cup (\gamma_2,\infty)$:
\begin{equation}
\label{pres_hardcore}
\Press(\Phi_{\gamma}) = \frac{1}{2}\hat{I}^\gamma_\pi(\omega^{(o)}) + \frac{1}{2} \log \gamma,
\end{equation}
where $\gamma_1 = 2.48$ and $\gamma_2 = \gammatwo$.
\end{enumerate}

Moreover, for the three models in the corresponding regions (except in the case when $\beta = \beta_{\rm c}(q)$ in the Potts model), the pressure can be approximated in poly-time, where the polynomial involved depends on the parameters of the models.
\end{corollary}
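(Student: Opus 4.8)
The plan is to obtain all three identities as instances of Theorem \ref{press-aprox} applied to the natural periodic configurations, with the exponential-convergence estimates of Section \ref{section8} supplying the decay hypothesis \eqref{exp-decay}, followed by a short computation of $A_\Phi$; the value $\beta=\beta_{\rm c}(q)$ in the Potts case is then recovered by a continuity argument.

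First I would verify the structural hypotheses of Theorem \ref{press-aprox} for each model. In each case $\Omega(\mathcal E)$ has a safe symbol (any colour for Potts, the symbol $0$ for Widom--Rowlinson and for hard-core), so $\Omega(\mathcal E)$ satisfies the square block D-condition and, by Lemma \ref{safeCmu}, $\mathrm c_\pi(\nu^{\overline\omega})>0$ for every periodic $\overline\omega\in\Omega(\mathcal E)$; I take $\overline\omega=\omega_q$ for Potts and Widom--Rowlinson (a fixed point, $\mathrm O(\omega_q)=\{\omega_q\}$) and $\overline\omega=\omega^{(o)}$ for the hard-core model (a period-two point, $\mathrm O(\omega^{(o)})=\{\omega^{(o)},\omega^{(e)}\}$). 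Hypothesis \eqref{exp-decay} is then exactly Theorem \ref{potts-decay} for Potts when $\beta\neq\beta_{\rm c}(q)$, Theorem \ref{widom-decay} for Widom--Rowlinson when $\lambda\in(0,\lambda_1(q))\cup(\lambda_2(q),\infty)$, and Theorem \ref{hard-decay} at $\omega^{(o)}$ for hard-core when $\gamma\in(0,\gamma_1)\cup(\gamma_2,\infty)$; at the remaining orbit point $\omega^{(e)}$ the estimate is trivial, because the sites $-e_1,-e_2\in\bBdry S_{y,z}$ both carry the symbol $1$ under $\omega^{(e)}$, forcing $\theta(\veczero)=0=\omega^{(e)}(\veczero)$ and hence $\pi_n(\omega^{(e)})=\pi_{y,z}(\omega^{(e)})=1$ for all $n,y,z$. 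Thus Theorem \ref{press-aprox} applies in each case, yielding both $\Press(\Phi)=|\mathrm O(\overline\omega)|^{-1}\sum_{\omega\in\mathrm O(\overline\omega)}\bigl(\hat I_\pi(\omega)+A_\Phi(\omega)\bigr)$ and the poly-time approximation algorithm, the polynomial dependence on the model parameters being inherited from the constants $C,\alpha$ of the Section \ref{section8} estimates and the transfer-matrix count inside the proof of Theorem \ref{press-aprox}.

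It then remains to identify $A_\Phi$ at the relevant points. The Potts interaction lives on bonds, and $\omega_q$ makes every bond monochromatic, so $A_{\Phi_\beta}(\omega_q)=-\sum_{i=1}^2\Phi_\beta(\omega_q(\{\veczero,e_i\}))=2\beta$, which gives \eqref{pres_Potts}. The Widom--Rowlinson and hard-core interactions live on single sites, so $A_{\Phi_\lambda}(\omega_q)=-\Phi_\lambda(q)=\log\lambda$, which gives \eqref{pres_WR}, while $A_{\Phi_\gamma}(\omega^{(o)})=-\Phi_\gamma(1)=\log\gamma$ and $A_{\Phi_\gamma}(\omega^{(e)})=-\Phi_\gamma(0)=0$; combining this with $\hat I_\pi(\omega^{(e)})=-\log 1=0$ from the forcing observation above, the $\omega^{(e)}$ contribution to the average collapses and $\Press(\Phi_\gamma)=\tfrac12\hat I^\gamma_\pi(\omega^{(o)})+\tfrac12\log\gamma$, i.e. \eqref{pres_hardcore}.

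Finally, \eqref{pres_Potts} at $\beta=\beta_{\rm c}(q)$ is not covered by the above, since Theorem \ref{potts-decay} (hence Theorem \ref{press-aprox}) degenerates exactly at criticality; here I would argue by continuity. For $\beta\neq\beta_{\rm c}(q)$ one has $\hat I^\beta_\pi(\omega_q)=\Press(\Phi_\beta)-2\beta$, whose right-hand side is continuous in $\beta$ everywhere, so $\hat I^\beta_\pi(\omega_q)$ extends continuously through $\beta_{\rm c}(q)$; it then remains to check that this extension equals the intrinsic value $-\log\lim_n\pi^\beta_n(\omega_q)$ at $\beta_{\rm c}(q)$. The limit exists by monotonicity of $\pi^\beta_n(\omega_q)$ in $n$ (Proposition \ref{Potts_FKG}(3) and Remark \ref{Remark_FKG}), and the required matching follows by sandwiching $\hat\pi^{\beta_{\rm c}}(\omega_q)$ between the one-sided limits $\lim_{\beta\to\beta_{\rm c}^{\pm}}\hat\pi^\beta(\omega_q)$, using continuity of each $\pi^\beta_n(\omega_q)$ in $\beta$ and the uniform bound $\hat\pi^\beta(\omega_q)\geq c>0$ near $\beta_{\rm c}(q)$ from the safe symbol. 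This continuity step at criticality, where all the mixing and decay tools break down, is the one genuinely delicate point; everything else is the bookkeeping above layered on top of the Section \ref{section8} estimates and Theorem \ref{press-aprox}.
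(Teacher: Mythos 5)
Everything up to the critical Potts case is correct and is essentially the paper's own argument: the same periodic points $\omega_q$ and $\omega^{(o)}$, the same appeal to the safe symbol for the square block D-condition and $\mathrm{c}_\pi(\nu^{\overline\omega})>0$, the same invocation of Theorems \ref{potts-decay}, \ref{widom-decay} and \ref{hard-decay} to verify \eqref{exp-decay}, and the same evaluation of $A_\Phi$. Your explicit justification that $\pi_n(\omega^{(e)})=\pi_{y,z}(\omega^{(e)})=1$ (via the symbols $1$ at $-e_1,-e_2\in\bBdry S_{y,z}$ forcing $\theta(\veczero)=0$) is a correct elaboration of the paper's unexplained parenthetical $\hat I^\gamma_\pi(\omega^{(e)})=A_\Phi(\omega^{(e)})=0$, and is a welcome addition.

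The gap is in the argument at $\beta=\beta_{\rm c}(q)$. Your ``sandwich'' of $\hat\pi^{\beta_{\rm c}}(\omega_q)$ between the one-sided limits $\lim_{\beta\to\beta_{\rm c}^{\pm}}\hat\pi^{\beta}(\omega_q)$ does not close with the ingredients you cite. From Proposition \ref{Potts_FKG}(3) the sequence $\pi^\beta_n(\omega_q)$ is decreasing in $n$, so $\hat\pi^{\beta}(\omega_q)=\inf_n\pi^\beta_n(\omega_q)$ is an infimum of functions continuous in $\beta$, hence upper semicontinuous; this yields only the one inequality $\hat\pi^{\beta_{\rm c}}(\omega_q)\geq\limsup_{\beta\to\beta_{\rm c}}\hat\pi^{\beta}(\omega_q)$. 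The reverse inequality is not a consequence of continuity of each $\pi^\beta_n(\omega_q)$ in $\beta$ together with the uniform lower bound $\hat\pi^\beta(\omega_q)\geq c>0$: lower semicontinuity of a decreasing pointwise limit can genuinely fail, and $\beta_{\rm c}(q)$ is exactly where the uniform-in-$\beta$ control on the rate of convergence in $n$ breaks down. The missing input is monotonicity of $\beta\mapsto\pi^\beta_n(\omega_q)$ for each fixed $n$, which the paper obtains from the Edwards--Sokal coupling together with the comparison inequalities for the bond random-cluster model; once $a_{m,n}:=\pi^{\beta_{\rm c}(q)+\frac1m}_n(\omega_q)$ is decreasing in both $m$ and $n$, the iterated limits can be exchanged, giving $\lim_{\beta\to\beta_{\rm c}^{+}}\hat\pi^{\beta}(\omega_q)=\hat\pi^{\beta_{\rm c}}(\omega_q)$ and hence the representation at criticality. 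Your sketch implicitly relies on exactly this monotonicity (without it there is no order relation placing $\hat\pi^{\beta_{\rm c}}(\omega_q)$ \emph{between} the two one-sided limits), so you should state and justify it; it is the one nontrivial new fact needed for the critical case.
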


\begin{proof}
The representation of the pressure given in the previous statement for the $\Z^2$ Potts model with $q$ types and inverse temperature $\beta \neq \beta_{\rm c}(q)$, the $\Z^2$ Widom-Rowlinson model with $q$ types and activity $\lambda \in (0,\lambda_1(q)) \cup (\lambda_2(q),\infty)$ and the $\Z^2$ hard-core model with activity $\gamma \in (0,\gamma_1) \cup (\gamma_2,\infty)$, is a direct consequence of Theorem \ref{press-aprox}, by virtue of the following facts:
\begin{itemize}
\item Recall that the corresponding n.n. SFT $\Omega(\mathcal{E})$ for the Potts, Widom-Rowlinson and hard-core model has a safe symbol, respectively, so $\Omega(\mathcal{E})$ satisfies the square block D-condition and $c_\pi(\nu) > 0$, for any shift-invariant $\nu$ with $\supp(\nu) \subseteq \Omega(\mathcal{E})$, in each case.
\item If we consider the delta-measure $\nu = \nu^{\omega_q} = \delta_{\omega_q}$, both in the Potts and Widom-Row\-lin\-son cases (in a slight abuse of notation, since the Potts and Widom-Row\-lin\-son $\sigma$-algebras are defined in different alphabets), or the measure $\nu = \nu^{\omega^{(o)}} = \frac{1}{2}\delta_{\omega^{(e)}} + \frac{1}{2} \delta_{\omega^{(o)}}$ in the hard-core case, we have that in all three models, for the range of parameters specified, except for when $\beta = \beta_{\rm c}(q)$ in the Potts model, there exists $C,\alpha > 0$ such that, for every $y,z \geq \vecone n$:
\begin{equation}
\left| \pi_{n}(\omega) - \pi_{y,z}(\omega) \right| \leq Ce^{-{\alpha}n}, \mbox{ over } \omega \in \supp(\nu),
\end{equation}
thanks to Theorem \ref{potts-decay}, Theorem \ref{widom-decay} and Theorem \ref{hard-decay}, respectively. (Notice that $\hat{I}^\gamma_\pi(\omega^{(e)}) = A_\Phi(\omega^{(e)}) = 0$.)
\end{itemize}

This proves (\ref{pres_Potts}), (\ref{pres_WR}) and (\ref{pres_hardcore}), except in the Potts case when $\beta = \beta_{\rm c}$. To establish this case, first note that it is easy to prove that $\Press(\Phi_\beta)$ is continuous with respect to $\beta$. Second, if $\beta_1 \leq \beta_2$, then $\pi^{\beta_1}_{n}(\omega_q) \leq \pi^{\beta_2}_{n}(\omega_q)$. This follows by the Edwards-Sokal coupling (see Theorem \ref{edward}) and the comparison inequalities for the bond random-cluster model \cite[Theorem 4.1]{1-aizenman}.

As an exercise in analysis, it is not difficult to prove that if $a_{m,n} \geq 0$, and each $a_{m+1,n} \leq a_{m,n}$ and $a_{m,n+1} \leq a_{m,n}$, then $\lim_m \lim_n a_{m,n} = \lim_n \lim_m a_{m,n} = a$, for some $a \geq 0$.

Now, consider the sequence $a_{m,n} := \pi^{\beta_{\rm c}(q) + \frac{1}{m}}_{n}(\omega_q)$. By stochastic dominance (see Proposition \ref{Potts_FKG}), $a_{m,n}$ is decreasing in $n$. By the previous discussion (Edwards-Sokal coupling), it is also decreasing in $m$. Therefore, and since $a_{m,n} \geq 0$, we conclude that $\lim_m \lim_n a_{m,n} = \lim_n \lim_m a_{m,n} = a$, for some $a$.

Then, we have that:
\begin{align}
\Press(\Phi_{\beta_{\rm c}(q)})	&	=	\lim_m \Press(\Phi_{\beta_{\rm c}(q) + \frac{1}{m}})	\\
					&	=	\lim_m -\log \lim_n \pi^{\beta_{\rm c}(q) + \frac{1}{m}}_n(\omega_q) + 2\left(\beta_{\rm c}(q) + \frac{1}{m}\right)	\\
					&	=	-\log \lim_m \lim_n \pi^{\beta_{\rm c}(q) + \frac{1}{m}}_n(\omega_q) + 2\beta_{\rm c}(q)	\\
					&	=	-\log \lim_n \lim_m \pi^{\beta_{\rm c}(q) + \frac{1}{m}}_n(\omega_q) + 2\beta_{\rm c}(q)	\\
					&	=	-\log \lim_n \pi^{\beta_{\rm c}(q)}_n(\omega_q) + 2\beta_{\rm c}(q)	\\
					&	=	\hat{I}^{\beta_{\rm c}(q)}_\pi(\omega_q) + 2\beta_{\rm c}(q).
\end{align}

(To prove that $\lim_m \pi^{\beta_{\rm c}(q) + \frac{1}{m}}_n(\omega_q) = \pi^{\beta_{\rm c}(q)}_n(\omega_q)$ is straightforward.)

Finally, the algorithmic implications are also a direct application of Theorem \ref{press-aprox}.
\end{proof}

\begin{remark}
The algorithm given in Theorem \ref{press-aprox} seems to require explicit bounds on the constants $C$ and $\alpha$, so that given $N \in \N$, we can find an explicit $n$ such that $Ce^{-\alpha (n+1)} < \frac{1}{N}$. Without such bounds, while there exists a poly-time algorithm, we do not always know how to exhibit an explicit algorithm. However, for all three models, for regions sufficiently deep within the supercritical region (i.e. $\beta$, $\lambda$ or $\gamma$ sufficiently large), one can find crude, but adequate, estimates on $C$ and $\alpha$ and thus can exhibit a poly-time algorithm. This is the case for the hard-core model, where our proof does allow an explicit estimate of the constants for any $\gamma > \gammatwo$. On the other hand, in the regions specified in Corollary \ref{potts-wr-hard} within the subcritical region, all three models satisfy exponential SSM and then using \cite[Corollary 4.7]{2-marcus}, one can, in principle, exhibit a poly-time algorithm (even without estimates on $C$ and $\alpha$).
\end{remark}

%%%%%%%%%%%%%%%%%%%%%%%%%%%%%%%%%%%%%%%%%%%%%%%%%%

\section*{Acknowledgements}

We thank Nishant Chandgotia and Andrew Rechnitzer for helpful discussions.

\printbibliography

\end{document}